\documentclass[reqno]{amsart}

\usepackage{xcolor}
\usepackage{amssymb,amsmath,amsthm,amsfonts}
\usepackage{latexsym}
\usepackage{bbm}
\usepackage{mathrsfs}
\usepackage{hyperref}
\usepackage{makecell}
\usepackage{multirow}
\usepackage{diagbox}
\usepackage{braket}
\usepackage{enumitem}

\usepackage[a4paper,textwidth=158mm,textheight=232mm,centering,
            headheight=14pt,headsep=7mm,footskip=12mm]{geometry}
\allowdisplaybreaks[1]
\setlist[itemize]{leftmargin=2.4em,labelsep=.6em,
                 topsep=.5\baselineskip,itemsep=.3\baselineskip,parsep=0pt}
\setlist[enumerate]{leftmargin=2.4em,labelsep=.6em,
                   topsep=.5\baselineskip,itemsep=.4\baselineskip,parsep=0pt}
\hypersetup{hidelinks,pdftitle={Endpoint Mapping Properties of Wave Operators
for Schrodinger Operators in Dimensions n >= 3},
pdfauthor={Han Cheng, Changxing Miao, Xiaohua Yao}}

\makeatletter\renewcommand*{\eqref}[1]{%
\hyperref[{#1}]{\textup{\tagform@{\!\!\ref*{#1}}}}%
}\makeatother 

\makeatletter

\@addtoreset{equation}{section}
\makeatother
\theoremstyle{plain}
\newtheorem{theorem}{Theorem}[section]
\newtheorem{lemma}[theorem]{Lemma}
\newtheorem{proposition}[theorem]{Proposition}

\theoremstyle{definition}

\newtheorem{remark}[theorem]{Remark}

\newcommand{\les}{\lesssim}

\def\ac{\mathop{\mathrm{ac}}\nolimits}

\def\R{{\mathbb{R}}}

\def\C{{\mathbb{C}}}

\def\<{{\langle}}
\def\>{{\rangle}}

\makeatletter
\renewcommand{\l@section}{\@tocline{1}{0pt}{1em}{2.5pc}{}}
\renewcommand{\l@subsection}{\@tocline{2}{0pt}{2em}{2pc}{}}
\makeatother

\title[Endpoint mapping properties of wave operators]
{Endpoint Mapping Properties of Wave Operators for
Schr\"odinger Operators in Dimensions $n\ge3$}
\author{Han Cheng, Changxing Miao, Xiaohua Yao}
\address {Han Cheng, Institute of Applied Physics and Computational Mathematics, Beijing, 100088, People's Republic of China}
\email{chmathh@163.com}

\address {Changxing Miao, School of Mathematics and Physics,
University of Science and Technology Beijing,
Beijing 100083, China}
\email{miao\_changxing@ustb.edu.cn}

\address {Xiaohua Yao, School of Mathematics and Statistics, Key Laboratory of Nonlinear Analysis and Applications (Ministry of Education), Central China Normal University, Wuhan, 430079, P.R. China}
\email{yaoxiaohua@ccnu.edu.cn}
\date{\today}

\keywords{$L^p$ theory, wave operator, Schr\"odinger operator,
zero-energy resonance, zero-energy eigenvalue, endpoint estimates}

\begin{document}
\begin{abstract}
We study endpoint mapping properties of low-energy wave operators for
Schr\"odinger operators $H=-\Delta+V$ on $\mathbb R^n$, $n\ge3$.
In dimension four, we prove that a zero-energy resonance prevents
$L^1$ boundedness, whether or not zero is also an eigenvalue, under
$|V(x)|\lesssim\langle x\rangle^{-\beta}$ with $\beta>10$.
For a zero-energy eigenvalue without a resonance, we obtain a complete
low-energy $L^p$ classification in every dimension $n\ge3$ under
$\beta>n+4$. In particular, $L^\infty$ boundedness is equivalent to
the vanishing of the zeroth, first, and harmonic second moments of
$V\psi$ for every zero-energy eigenfunction $\psi$.
The proof identifies the finite-rank obstruction and shows that the
remaining eigenvalue correction cannot cancel its critical asymptotic
profiles. The same conclusions hold for the full wave operators when
the corresponding high-energy bounds are available.
\end{abstract}

\maketitle

\section{Introduction}

\subsection{Background and problems}
Let $H_0=-\Delta$ and $H=H_0+V$ be the free and perturbed
Schr\"odinger operators on $\R^n$, where $V$ is a real-valued
potential. If $|V(x)|\les \langle x\rangle^{-\delta}$ for some
$\delta>1$, then the wave operators, defined as the strong limits in
$L^2(\R^n)$,
\[
W_\pm=W_\pm(H,H_0)
:=
\operatorname*{s-lim}_{t\to\pm\infty}
e^{itH}e^{-itH_0},
\]
exist and are asymptotically complete (see, e.g., Agmon \cite{Agmon} or
Reed and Simon \cite{Reed-Simon-III}); in particular
$W_\pm^{*}W_\pm=I$ and $W_\pm W_\pm^{*}=P_{\ac}(H)$, where $P_{\ac}(H)$
denotes the projection onto the absolutely continuous spectral subspace
of $H$. In this paper we study the mapping properties of $W_\pm$ on
$L^p(\R^n)$, with emphasis on the endpoints $p=1$ and $p=\infty$.

A principal reason for interest in such bounds is the intertwining identity
\begin{equation}\label{eq:intertwining}
m(H)P_{\ac}(H)=W_\pm m(H_0)W_\pm^*,
\end{equation}
valid for every bounded Borel function $m$. Once endpoint bounds for
$W_\pm$ are available, \eqref{eq:intertwining} transfers mapping
properties of the free multiplier $m(-\Delta)$ to the perturbed
operator; the $L^\infty$-bound of the wave operators, for instance, is
precisely what is needed to transfer the free $L^1$--$L^\infty$ dispersive estimate to $e^{itH}P_{\ac}(H)$. A second motivation is the
factorization $W_\pm=\mathcal F_\mp^{*}\mathcal F$ of the wave
operators through the distorted Fourier transform of $H$, which makes
the $L^p$ theory of wave operators a natural framework for the harmonic
analysis of $H$. We refer to the introduction of \cite{CMY-2D} for a
more detailed discussion of these two perspectives and for further
references.

The $L^p$ theory of wave operators was initiated by Yajima
\cite{Yajima-JMSJ-95, Yajima-1995-even} and has since been developed in
all spatial dimensions. The answer depends sensitively on the dimension
$n$ and on the spectral behavior of $H$ at zero energy, namely on the
presence of a zero-energy resonance or of zero-energy eigenfunctions;
see Subsection~\ref{known results} for a review. While the non-endpoint range $1<p<\infty$ is essentially understood in
every dimension, the endpoints are considerably more delicate. The
two-dimensional endpoint problem was solved in \cite{CMY-2D}, where the
following two endpoint problems were singled out
as remaining open:
\begin{itemize}
\item[($a$)] in $\R^4$, the $L^1$-unboundedness of $W_\pm$ when $H$
has a zero-energy resonance;

\item[($b$)] for $n\ge 3$, necessary and sufficient conditions for
the $L^\infty$-boundedness of $W_\pm$ when zero is an eigenvalue of
$H$ and no resonance is present.
\end{itemize}
We resolve these two problems for the low-energy wave operators.
The conclusions extend to the full wave operators whenever the
corresponding high-energy bounds are available.

We briefly place the two problems in context. In dimension four,
Yajima \cite{Yajima-2022-4} proved that, in the presence of a
zero-energy resonance, $W_\pm$ are bounded on $L^p(\R^4)$ for
$1<p\le2$ and unbounded for $2<p\le\infty$; whether they are bounded on
$L^1(\R^4)$ was left open. The question is already subtle at the level
of the comparison with other dimensions: in dimension three a resonance
destroys the $L^1$-bound \cite{Yajima-2016-3d}, whereas in dimensions
$n\ge5$ the wave operators may be bounded on $L^1$ even when zero is an
eigenvalue
\cite{Finco-Yajima-2006-even, Yajima-JMSJ-2006, Goldberg-Green-2016}.

Problem ($b$) concerns the case in which zero is an eigenvalue of $H$
but not a resonance. There the boundedness of $W_\pm$ on $L^p(\R^n)$ is
governed by the vanishing of the moments
\begin{equation}\label{eq:moments}
\int_{\R^n} x^\alpha V(x)\psi(x)\,dx
\end{equation}
of the zero-energy eigenfunctions $\psi$: roughly speaking, the larger
the exponent $p$, the more moments are required to vanish (see
Subsection~\ref{known results}). The endpoint $p=\infty$ requires a separate analysis.
Yajima \cite{Yajima-2016,Yajima-2016-3d} proved a sufficient
condition in dimension three, and Erdo\u{g}an, Green, and LaMaster
\cite{Erd-Gre-arxiv} obtained sufficient conditions in all dimensions
$n\ge3$. These results assume that \eqref{eq:moments} vanishes
for every $|\alpha|\le2$ and every zero-energy eigenfunction.
We identify the exact condition: in addition to the zeroth and
first moments, only the harmonic second moments must vanish.
This agrees with the role of harmonic second moments in the
two-dimensional endpoint analysis of \cite{CMY-2D}.

\subsection{Main results}
To state the results, we first recall the threshold terminology; for a
systematic discussion we refer to \cite{JN01} and the references
therein. A \emph{zero-energy eigenfunction} of $H$ is a nontrivial
$L^2$-solution of
\begin{equation}\label{eq:eigen}
H\psi=-\Delta\psi+V\psi=0
\end{equation}
in the sense of distributions. A \emph{zero-energy resonance} is a distributional solution of \eqref{eq:eigen}
belonging to $L^2_{-1/2-}(\mathbb{R}^3)\setminus L^2(\mathbb{R}^3)$ in dimension $n=3$,
and to $L^2_{-0-}(\mathbb{R}^4)\setminus L^2(\mathbb{R}^4)$ in dimension $n=4$.
Under the decay assumptions stated below, such resonances do not occur in dimensions $n\ge5$.

Choose an even function $\chi\in C_c^\infty(\mathbb R)$ with $0\le\chi\le1$ and define $\tilde\chi$ by
\begin{equation}\label{cutoff-fun}
\chi(\lambda)=1\quad(|\lambda|\le1/2),\qquad\chi(\lambda)=0\quad(|\lambda|\ge1),\qquad\tilde\chi=1-\chi.
\end{equation}
For a fixed cutoff scale $\lambda_0>0$, we write
\[
\chi_0(\lambda):=\chi(\lambda/\lambda_0),\qquad
\widetilde\chi_0(\lambda):=1-\chi_0(\lambda).
\]
This notation will be used throughout the paper.
We denote
\[
W_\pm^{\mathrm{low}}=W_\pm \circ\chi_0(\sqrt{-\Delta}),\quad  W_\pm^{\mathrm{high}}=W_\pm \circ \widetilde\chi_0(\sqrt{-\Delta}).
\]

Our first result resolves the low-energy obstruction in problem ($a$).
\begin{theorem}\label{thm:main-4d}
Let $H=-\Delta+V$ be a Schr\"odinger operator on $\R^4$ with
$|V(x)|\les\langle x\rangle^{-\beta}$ for some
$\beta>10$. If $H$ has a zero-energy resonance, then the wave
operators $W_\pm^{\mathrm{low}}$ are unbounded on $L^1(\R^4)$ for every sufficiently small cutoff scale $\lambda_0>0$.
\end{theorem}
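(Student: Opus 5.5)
We argue by contradiction and isolate an explicit rank-one piece of $W_\pm^{\mathrm{low}}$ that cannot be $L^1$-bounded. The starting point is the stationary representation
\[
W_\pm^{\mathrm{low}}=\chi_0(\sqrt{-\Delta})\mp\frac{1}{\pi i}\int_0^\infty R_V^\mp(\lambda^2)VR_0^\pm(\lambda^2)-R_0^\pm(\lambda^2)\,\lambda\chi_0(\lambda)\,d\lambda ,
\]
rewritten in the symmetric form
\[
R_V^\pm(\lambda^2)V=R_0^\pm(\lambda^2)\,v\,M^\pm(\lambda)^{-1}v ,\qquad v=|V|^{1/2},\quad U=\mathrm{sgn}\,V .
\]
We then use the four-dimensional threshold expansion of $M^\pm(\lambda)^{-1}$ of Jensen--Nenciu and Erdo\u{g}an--Goldberg--Green type. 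In the resonant case (type II or III) this expansion has the form
\[
M^\pm(\lambda)^{-1}=\frac{g^\pm(\lambda)^{-1}}{\lambda^2}\,S_1D_1S_1\;+\;\frac{1}{\lambda^2}S_2D_2S_2\;+\;E^\pm(\lambda),
\]
where $g^\pm(\lambda)=a\log\lambda+z$ and $S_1$ is the rank-one projection onto the resonance direction $v\psi_r$. The condition $\beta>10$ is what allows the error $E^\pm$ to be expanded with enough terms, roughly $O(\lambda^{0+})$ with derivative bounds.

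The plan has three steps.

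\textbf{Step 1.} All terms except the logarithmic resonance term give operators bounded on $L^1$. Two results already in the literature cover this: Yajima's bounds for the regular part, and Goldberg--Green/Erdo\u{g}an--Green--LaMaster for the eigenvalue part when the resonance is absent. We would verify that their kernel estimates for $E^\pm$, and for the $S_2$ eigenvalue term, depend only on the structure of these terms and not on the absence of a resonance. This reduces the problem to the single operator
\[
K=\int_0^\infty R_0^\mp(\lambda^2)\,v S_1D_1S_1 v\,\bigl[R_0^\pm(\lambda^2)-R_0^\mp(\lambda^2)\bigr]\frac{\chi_0(\lambda)}{\lambda\, g^\pm(\lambda)}\,d\lambda .
\]
Since $S_1$ has rank one with $vS_1\sim V\psi_r$, and $\int V\psi_r\neq0$ in the resonant case (this is the defining nonvanishing coefficient), the kernel of $K$ takes the form
\[
K(x,y)=\iint R_0^\mp(\lambda^2)(x-x_1)\,(V\psi_r)(x_1)(V\psi_r)(y_1)\,\mathrm{Im}R_0(\lambda^2)(y_1-y)\,\frac{\chi_0(\lambda)\,d\lambda}{\lambda\, g(\lambda)}\,dx_1\,dy_1 .
\]

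\textbf{Step 2.} Compute the asymptotic profile of $K(x,y)$ as $|x|\to\infty$ with $y$ in a bounded set. We use the explicit Hankel form $R_0^\pm(\lambda^2)(z)=c\,\lambda|z|^{-1}H_1^\pm(\lambda|z|)$ and $\mathrm{Im}R_0(\lambda^2)(z)=c\,\lambda^2 J_1(\lambda|z|)/(\lambda|z|)\approx c'\lambda^2$ for $\lambda|z|\lesssim1$. Substituting $\lambda=s/|x|$, the $\lambda$-integral behaves like
\[
|x|^{-4}\int_0^\infty s\,H_1(s)\,\frac{\chi_0(s/|x|)}{\log(s/|x|)}\,ds\;\sim\;\frac{c_0\,(\int V\psi_r)^2}{|x|^{4}\log|x|}, \qquad c_0\neq0,
\]
uniformly for $y$ in compact sets. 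The integral converges at infinity through oscillation, which is handled by integrating by parts using $\chi_0$. The profile $|x|^{-4}(\log|x|)^{-1}$ is not integrable on $\{|x|>2\}$ in $\R^4$, since $\int^\infty dr/(r\log r)=\infty$.

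\textbf{Step 3.} Test against an approximate identity $f_\varepsilon=\varepsilon^{-4}\phi(\cdot/\varepsilon)$ with $\|f_\varepsilon\|_{L^1}=1$. This gives $\|Kf_\varepsilon\|_{L^1}\ge\int_{2<|x|<R}|K(x,0)|\,dx-o(1)\gtrsim\log\log R$, which contradicts $L^1$ boundedness once we recall that the remaining terms are bounded. Small $\lambda_0$ is needed so that the expansion of $M^{-1}$ is valid on $\operatorname{supp}\chi_0$.

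The main obstacle is Step 2. We must show that the leading coefficient $c_0$ does not vanish, including the contribution from $\mathrm{Im}$ versus the full resolvent and the sign structure of $g^\pm$. We must also show that no other term, in particular the eigenvalue part $S_2$ when zero is also an eigenvalue, produces a competing $|x|^{-4}(\log|x|)^{-1}$ tail that could cancel it. We would first try to show that the $S_2$ terms contribute kernels $O(|x|^{-4-})$ or of the form $|x|^{-4}$ times an oscillating factor, using $P_eV1=0$ (so $S_2v1=0$) to gain decay. This is where the requirement of \emph{non-cancellation} of critical profiles lies.
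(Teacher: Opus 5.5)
There is a genuine gap in Step~1, and it is the non-cancellation issue you defer to the end. When zero is both a resonance and an eigenvalue, $M^+(\lambda)^{-1}$ does not have the form you write. Take $S_1$ to be the projection onto $\ker T$ and $S_1-S_2$ the rank-one resonance piece. Besides the two poles, the expansion then contains the mixed terms $f_1^+(\lambda)\bigl[(S_1-S_2)B_1S_2+S_2B_2(S_1-S_2)+S_2B_3S_2\bigr]$, of size $\lambda^{-2}|\log\lambda|^{-1}$. It also contains $(a_2\log\lambda+z_2)S_2D_2\Gamma_3D_2S_2$, of size $|\log\lambda|$.

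None of these fits into an $O(\lambda^{0+})$ error, and the literature you invoke does not give $L^1$ bounds for them. The Goldberg--Green and Erdo\u{g}an--Green--LaMaster estimates treat the pure eigenvalue case, and Yajima's four-dimensional resonant result covers only $1<p\le2$. Even the genuinely regular remainder is not $O(\lambda^{0+})$: it is a $\lambda$-independent absolutely bounded operator plus an $\widetilde{\mathcal O}(|\log\lambda|^{-1})$ term, and it needs its own $L^1$ argument via the oscillatory estimate. Your plan to gain decay from $S_2v=0$ does not reach the mixed terms, which carry the resonance direction on one side. So the reduction to your single operator $K$ is unproved.

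The idea that closes this is not to split the singular part at all. Pair the output of the whole singular part with the constant function $1$, i.e.\ evaluate its Fourier transform at $\xi=0$. Use the inputs $h_k$ with $\widehat{h_k}(\xi)=\chi_0(2|\xi|)\bigl(1-\chi(k|\xi|)\bigr)$. These are uniformly bounded in $L^1$ and their Fourier support avoids the origin, so the evaluation is legitimate, and $L^1$-boundedness would make the pairings uniformly bounded.

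Work in the basis $\psi_0=(S_1-S_2)v/\|(S_1-S_2)v\|$, $\psi_1,\dots,\psi_m$ of $S_1L^2$. Every term whose output lies in $S_2L^2$ carries the factor $\widehat{v\psi_i}(0)=0$ (because $S_2v=0$) and disappears; this includes all eigenvalue and $g_2^+$ terms. The entries with input $\psi_j$, $j\ge1$, stay bounded in $k$, because $\widehat{v\psi_j}(\eta)=\mathcal O(|\eta|)$ gives the integrable majorant $|f_1^+(|\eta|)|/|\eta|$ on $\R^4$. Only the resonance--resonance entry remains. Since $\widehat{v\psi_0}(0)=\|(S_1-S_2)v\|\ne0$, it equals $c\log\log k+\mathcal O(1)$ with $c\ne0$, coming from $\int_{1/k}^{\lambda_0}\frac{dr}{r(a\log r+z)}$.

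This is the Fourier-side form of your $|x|^{-4}(\log|x|)^{-1}$ tail, but it uses only the size of $f_1^+$ and no pointwise kernel asymptotics. Your Step~2 would also have to control the region $\lambda|x|\gtrsim1$, which by scaling contributes at the same order; note that the rescaled profile is $s^2H_1^{(1)}(s)$, not $sH_1^{(1)}(s)$. In the resonance-only case your kernel route can be completed, but it is more laborious.
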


Together with the known non-endpoint bounds and unboundedness results
of Yajima \cite{Yajima-2022-4}, Theorem~\ref{thm:main-4d} closes
the missing $L^1$ endpoint in the four-dimensional resonant case.
Whenever the high-energy component is bounded in the relevant spaces,
the full wave operators are bounded precisely for $1<p\le2$.
This complements the resonance results in dimensions one
\cite{Weder-99-CMP,Huang-Yao-2026}, two \cite{CMY-2D}, and three
\cite{Yajima-2016,Yajima-2016-3d}.

We next consider problem ($b$). The classification is expressed
in terms of three nested subspaces of the zero-energy eigenspace.
Let $\mathcal{E}=\ker H|_{L^2}$ and define $\mathcal{E}_0\supset\mathcal{E}_1\supset\mathcal{E}_2$ by
\begin{align*}
\mathcal{E}_0&=\Bigl\{\psi\in\mathcal{E}:\int_{\R^n} V\psi\,dx=0\Bigr\},\\
\mathcal{E}_1&=\Bigl\{\psi\in\mathcal{E}_0:\int_{\R^n}x_j V\psi\,dx=0,\ \forall j\Bigr\},\\
\mathcal{E}_2&=\Bigl\{\psi\in\mathcal{E}_1:\int_{\R^n}P(x)V(x)\psi(x)\,dx=0,\
\forall P\in\mathcal{A}_2\Bigr\},
\end{align*}
where $\mathcal{A}_2$ is the space of homogeneous harmonic polynomials of degree $2$ on $\R^n$.

\begin{theorem}\label{thm:main-infty}
Let $n\ge3$ and let $|V(x)|\lesssim\langle x\rangle^{-\beta}$, where
$\beta>n+4$. Fix a sufficiently small cutoff scale $\lambda_0>0$. Suppose zero is an eigenvalue of $H=-\Delta+V$ and
no zero-energy resonance is present. Then
\begin{enumerate}
\item[\emph{(i)}] $W_\pm^{\mathrm{low}}$ are bounded on
$L^p(\R^n)$ for $1\le p<\frac{n}{2}$ when $n\ge5$, and for
$1\le p\le2$ when $n=3, 4$; if
$\mathcal{E}\setminus\mathcal{E}_0\neq\emptyset$
(which is possible only when $n\ge5$), then they are unbounded on
$L^p(\R^n)$ for every
$\frac{n}{2}\le p\le\infty$;
\item[\emph{(ii)}] assume $\mathcal{E}=\mathcal{E}_0$; then
$W_\pm^{\mathrm{low}}$ are bounded on
$L^p(\R^n)$ for $1\le p<n$; if moreover
$\mathcal{E}_0\setminus\mathcal{E}_1\neq\emptyset$, they are unbounded
on $L^p(\R^n)$ for every $n\le p\le\infty$;
\item[\emph{(iii)}] assume $\mathcal{E}=\mathcal{E}_1$; then
$W_\pm^{\mathrm{low}}$ are bounded on
$L^p(\R^n)$ for $1\le p<\infty$; if moreover
$\mathcal{E}_1\setminus\mathcal{E}_2\neq\emptyset$, they are unbounded
on $L^\infty(\R^n)$;
\item[\emph{(iv)}] assume $\mathcal{E}=\mathcal{E}_2$; then
$W_\pm^{\mathrm{low}}$ are bounded on
$L^\infty(\R^n)$.
\end{enumerate}
\end{theorem}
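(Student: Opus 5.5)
The plan is to use the stationary representation of the low-energy wave operator and expand the resolvent near zero. With the symmetric resolvent identity $R_V^\pm(\lambda^2)=R_0^\pm(\lambda^2)-R_0^\pm(\lambda^2)vM^\pm(\lambda)^{-1}vR_0^\pm(\lambda^2)$, where $v=|V|^{1/2}$, $U=\operatorname{sgn}V$ and $M^\pm(\lambda)=U+vR_0^\pm(\lambda^2)v$, we write
\[
W_\pm^{\mathrm{low}}=\chi_0(\sqrt{-\Delta})-\frac1{\pi i}\int_0^\infty \chi_0(\lambda)\,R_0^\mp(\lambda^2)vM^\mp(\lambda)^{-1}vE_0'(\lambda)\,\lambda\,d\lambda .
\]
Since zero is an eigenvalue and there is no resonance, the Jensen--Kato/Jensen--Nenciu expansion, with $\beta>n+4$ giving enough terms, has the form
\[
M^\pm(\lambda)^{-1}=\lambda^{-2}S_1D_1S_1+\mathcal R^\pm(\lambda).
\]
Here $S_1$ is the Riesz projection onto $\{v\psi:\psi\in\mathcal E\}$ (up to the factor $U$) and $D_1$ is invertible on that space. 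The regular part $\mathcal R^\pm(\lambda)$ behaves like the regular case plus terms of size $\lambda^{n-4}$ (with logarithms when $n$ is even).

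The regular part is handled by the known kernel estimates for the regular case. These are Yajima's bounds, and they are also those of Erdo\u{g}an--Green(--LaMaster) for the error terms with sufficient $\lambda$-smoothness. They give boundedness on all $L^p$, $1\le p\le\infty$. This reduces everything to the finite-rank singular operator
\[
\Omega=\int_0^\infty\chi_0(\lambda)\,\lambda^{-1}R_0^\mp(\lambda^2)vS_1D_1S_1vE_0'(\lambda)\,d\lambda .
\]

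For $\Omega$, expand $\phi=v\psi$ and Taylor-expand the free resolvent kernel $R_0(\lambda^2)(x,y)$ in $y$ about $0$. Moments of $V\psi$ then appear in order:
\begin{itemize}
\item the zeroth-moment term gives a kernel $\sim \langle x\rangle^{2-n}\langle y\rangle^{2-n}$-type profile, i.e. like $|x|^{2-n}\otimes\psi$;
\item the first-moment term gives $x_j|x|^{-n}$;
\item the second-moment term gives $x_jx_k|x|^{-n-2}$;
\item the remainder gains $\lambda^3$ and decays like $|x|^{-n-1}$.
\end{itemize}
A profile $|x|^{-k}$ is in $L^p$ iff $p>n/k$. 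Pairing the $x$-side profiles with the adjoint structure of the kernel $K(x,y)\approx a(x)b(y)$ yields the boundedness ranges $p<n/2$, $p<n$, $p<\infty$ in (i)--(iii), and full $L^\infty$ under (iv). For (iv) we also use that the trace part $\delta_{jk}|x|^{-n}$ of the second moment combines with the resolvent identity. Indeed, $\int |y|^2V\psi\,dy$ corresponds to $\Delta$ acting on the Newtonian kernel, and can be absorbed via $-\Delta\psi=-V\psi$ into a term that is harmless, since $\int V\psi\,|y|^2$ equals $-\int\psi\Delta|y|^2\cdot\ldots=-2n\int\psi$ modulo boundary terms. The restrictions $p\le2$ for $n=3,4$ come from the known resonance-free eigenvalue bounds (Yajima; Goldberg--Green) and the unboundedness for $p>2$ there.

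For the unboundedness, we exhibit the critical profile. When $\mathcal E\setminus\mathcal E_0\neq\emptyset$, $\Omega f$ for $f\in C_c^\infty$ with suitable frequency support behaves like $c\,|x|^{2-n}\langle V\psi,1\rangle\langle\psi,f\rangle$ at infinity, which is not in $L^p$ for $p\le n/2$. The same argument with $|x|^{1-n}$ and $p\le n$ covers (ii). For (iii) we show $W^{\mathrm{low}}$ fails on $L^\infty$ by testing against $f$ bounded with a logarithmically divergent pairing against the homogeneous-degree-$(-n)$ kernel $P(\nabla)|x|^{2-n}$, where $P$ is harmonic.

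\textbf{Main obstacle.} The hardest step is showing that these critical profiles are not cancelled by the other contributions: the regular part, the off-diagonal terms of $M^{-1}$, and the $\lambda^{n-4}$ corrections. My approach is to compute the leading asymptotics of $\Omega f$ exactly, using the spherical-harmonic decomposition of $\int x^\alpha V\psi$. I will then verify that every other term produces kernels with strictly better decay, or with $L^p$-bounded structure, so they cannot cancel. In the harmonic case this requires checking that the Calder\'on--Zygmund-type kernel $P(x)|x|^{-n-2}$ has nonzero mean-zero angular part, precisely when $P\in\mathcal A_2$ pairs nontrivially with $V\psi$. That nonzero angular part is what rules out $L^\infty$ boundedness.
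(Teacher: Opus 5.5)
Your ingredients are the right ones: the stationary formula, the $\lambda^{-2}$ pole with coefficient $V\mathcal P_eV$, the moment expansion of $V\psi$, and the harmonic part of the second moment. However, the proposal has two genuine gaps.

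\textbf{First gap: the obstruction in (i)--(ii) is placed on the wrong side.} You argue on the output side, claiming $\Omega f\sim c|x|^{2-n}\langle V\psi,1\rangle\langle\psi,f\rangle$ at infinity and that this fails to lie in $L^p$ for $p\le n/2$. But $|x|^{2-n}$ lies in $L^p$ near infinity exactly when $p>n/(n-2)$. So for $n\ge5$ it belongs to every $L^p$ with $p\ge n/2$, and likewise $|x|^{1-n}\in L^p$ near infinity for every $p\ge n$. Worse, your $\Omega$ (the paper's $\mathcal W_e$) is bounded on $L^1$ by the Goldberg--Green and Yajima estimates you yourself invoke. Hence for $f\in C_c^\infty$ the function $\Omega f$ has no nonzero $|x|^{2-n}$ tail at all. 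A decaying output profile can only obstruct small $p$; the failure for large $p$ sits on the input (dual) side and must be detected by a sequence of inputs.

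To make that rigorous you need an exact finite-rank reduction whose remainder is bounded in the upper range, and this is missing. The paper inserts $R_0^+(\lambda^2)=(-\Delta)^{-1}+\lambda^2(-\Delta)^{-1}R_0^+(\lambda^2)$ on the left and uses $(-\Delta)^{-1}V\psi_j=-\psi_j$ to get $\mathcal W_e=\mathcal P_e\chi_0(\sqrt{-\Delta})+\mathcal N_{4,r}$. The high-frequency part of $\mathcal N_{4,r}$ is bounded on every $L^p$. The adjoint of its low-frequency part equals $-\chi_0(\sqrt{-\Delta})\mathcal W_e$, which is bounded for $1\le p\le2$, so $\mathcal N_{4,r}$ is bounded for $2\le p\le\infty$. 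Only then is boundedness for $p\ge2$ equivalent to $\chi_0(\sqrt{-\Delta})\psi_j\in L^{p'}$. The monopole, dipole and harmonic quadratic tails of these input profiles then give the thresholds $n/2$, $n$, $\infty$; the trace part of the second moment cancels $|\eta|^{-2}$ on the Fourier side and leaves a Schwartz function. Your heuristic $K(x,y)\approx a(x)b(y)$ and your logarithmically divergent test in (iii) are of the right type. Without this reduction, though, they prove neither the upper-range bounds nor the unboundedness.

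\textbf{Second gap: the remainder of $M^\pm(\lambda)^{-1}$ is not regular.} You claim everything beyond $\lambda^{-2}S_1D_1S_1$ is bounded on all $L^p$. In fact the eigenspace block of the inverse is $\lambda^{-2}S_1(D_2+G(\lambda))S_1$ with $G(\lambda)$ only $O(\lambda)$, so genuine $\lambda^{-1}$ terms occur. For $n=3$, the $\lambda^3|x-y|^2$ term of $R_0^+$ produces one built from the dipole moments. For $n=5$, the constant $\lambda^3$ term produces one built from $\int V\psi$. These have the same finite-rank structure as $\Omega$. For $n=5$ with $\mathcal E\ne\mathcal E_0$, the latter has an input profile with a nonzero $|x|^{1-n}$ tail and is unbounded for $p\ge n$. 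So no regular-case estimate absorbs them, and non-cancellation, which you correctly flag as the main obstacle, needs an actual mechanism.

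The paper's mechanism is as follows. Since the spectral difference localizes to $|\xi|=\lambda$, the correction factors as $\sum_{i,j}\mathcal W_{i,j}\,g_{ij}(\sqrt{-\Delta})$. It therefore only replaces $q_i=\chi_0(\sqrt{-\Delta})\psi_i$ by $h_i=q_i+\sum_j\overline{g_{ij}}(\sqrt{-\Delta})q_j$. The vanishing of $g_{ij}$ at zero, combined with the decay bound $|(-\Delta)^{\delta/2}q_j(x)|\lesssim\langle x\rangle^{-(n+\ell-1+\delta)}$ under $\mathcal E=\mathcal E_\ell$, gives $h_i-q_i\in L^{q_*}$ at the critical exponent $q_*=n/(n+\ell-1)$. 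Hence the correction cannot cancel the nonintegrable leading tail.

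Finally, for $n=3,4$ there is no unboundedness for $p>2$ in this eigenvalue-only setting. There $\mathcal E=\mathcal E_0$ holds automatically, and (ii) gives boundedness for $p<n$. The $p>2$ unboundedness you recall in $\R^4$ belongs to the resonant case.
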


We make some comments on Theorem~\ref{thm:main-infty}.
\begin{remark}
For $n=3,4$, under our assumptions on $V$, the identity $\mathcal{E}=\mathcal{E}_0$
holds automatically; see \cite{JK, J84, Yajima-1995-even}. Thus part~\textup{(ii)} gives boundedness for $1\le p<n$
in these dimensions, including the usual $L^2$ bound.
\end{remark}

\begin{remark}\label{rem:decay-and-endpoint-novelty}
In the presence of a zero-energy eigenvalue but no zero-energy
resonance, Theorem~\ref{thm:main-infty} establishes the low-energy
classification under the decay assumption $\beta>n+4$.
This is weaker than the assumption in \cite[Theorem~1.2]{Erd-Gre-arxiv},
which, for the classical Schr\"odinger operator, requires $\beta>n+6$
in odd dimensions and $\beta>n+5$ in even dimensions.
Moreover, to the best of our knowledge, this is the first
necessary and sufficient characterization of $L^\infty$-boundedness
in the eigenvalue case for every dimension $n\ge3$:
\[
W_\pm^{\mathrm{low}}\text{ are bounded on }L^\infty(\mathbb R^n)
\quad\Longleftrightarrow\quad
\mathcal E=\mathcal E_2.
\]
In particular, after the zeroth- and first-order moment conditions
are imposed, the endpoint is characterized precisely by the
vanishing of the harmonic second-order moments.
\end{remark}


\begin{remark}[Harmonic second moments]
The condition at $p=\infty$ is the higher-dimensional counterpart of
the one found in dimension two in \cite{CMY-2D}, where, in the
eigenvalue case, the $L^\infty$-endpoint was shown to depend on the
degree-two harmonic moments of $V\psi$, in addition to the presence
of an s-wave resonance and the absence of p-wave resonances. Thus, in
every dimension $n\ge2$, the $L^\infty$-endpoint of the wave
operators in the eigenvalue case is governed by second-order
threshold data of $H$.
\end{remark}

\begin{remark}
The statements here concern the low-energy wave operators.
They extend to the full wave operators whenever the corresponding
high-energy $L^p$ bounds are available under the chosen hypotheses
on $V$; see \cite{Yajima-JMSJ-95,Yajima-1995-even,Yajima-JMSJ-2006}. Indeed, it has been shown in \cite{Yajima-JMSJ-95,Yajima-1995-even,Yajima-JMSJ-2006} that $W_\pm^{\mathrm{high}}$ is bounded on $L^p$ for all $1\le p\le \infty$ and any fixed $\lambda_0$ if
\[
|V(x)|\les \langle x\rangle^{-n-2-}, \quad \mathcal{F}(\langle x\rangle^{2\sigma}V)\in L^{\frac{n-1}{n-2}} ~\text{ with $\sigma>\frac{n-2}{n-1}$}.
\]
Under these additional high-energy assumptions, Theorem~\ref{thm:main-infty}
therefore gives the same classification for the full wave operators
when zero is an eigenvalue and no zero-energy resonance is present.
\end{remark}

\subsection{Known results}\label{known results}
We briefly review the known results on the $L^p$-boundedness of Schr\"odinger wave operators.
A \emph{zero-energy resonance} is a distributional solution of \eqref{eq:eigen}
belonging to $L^2_{-1/2-}(\mathbb{R}^n)\setminus L^2(\mathbb{R}^n)$ in dimension $n=1, 3$,
and to $L^2_{-0-}(\mathbb{R}^4)\setminus L^2(\mathbb{R}^4)$ in dimension $n=4$.

\medskip
\noindent\textbf{Dimension $n=1$.}
Weder \cite{Weder-99-CMP} proved, under suitable decay assumptions on $V$, that the wave operators $W_\pm$ are bounded on $L^p(\mathbb R)$ for every $1<p<\infty$, both in the regular case and in the presence of a zero-energy resonance. He further showed that, if zero is resonant and the zero-energy Jost solution satisfies
\begin{equation}\label{eq:Jost-endpoint}
\lim_{x\to-\infty} f_1(x,0)=1,
\end{equation}
then $W_\pm$ are bounded on $L^1(\mathbb R)$ and $L^\infty(\mathbb R)$. Galtbayar and Yajima \cite{Gal-Yaj-00} and D'Ancona and Fanelli \cite{AF06} subsequently established $L^p$-boundedness for $1<p<\infty$ under alternative, and in part weaker, assumptions on $V$.

Weder's low-energy decomposition exhibits Hilbert-transform terms in the remaining cases, which suggests the failure of strong endpoint boundedness. The actual unboundedness on $L^1(\mathbb R)$ and $L^\infty(\mathbb R)$ in the regular case, as well as in the resonant case where \eqref{eq:Jost-endpoint} fails, was established only recently by Huang and Yao \cite{Huang-Yao-2026}. Together with Weder's positive endpoint result, this completes the classification of the $L^p$-boundedness of the one-dimensional wave operators under the assumptions considered there. We recall that, for potentials satisfying $|V(x)|\les\langle x\rangle^{-2-}$, zero cannot be an $L^2$-eigenvalue in dimension one; see, for example, \cite{Deift-T}.

\medskip
\noindent\textbf{Dimension $n=2$.}
When zero is a regular point of $H$, Yajima \cite{Yajima-99-CMP} and Jensen--Yajima \cite{Jensen-Yajima-02-CMP} proved that $W_\pm$ are bounded on $L^p(\mathbb R^2)$ for every $1<p<\infty$. Erdo\u{g}an, Goldberg, and Green \cite{Erdogan-Goldberg-Green-JFA-2018} showed that the same conclusion remains valid if zero is a first-kind threshold singularity, or a zero-energy eigenvalue with no threshold resonance present.

Yajima \cite{Yajima-2022} subsequently obtained a sharp classification in the non-endpoint range: $W_\pm$ are bounded on $L^p(\mathbb R^2)$ for all $1<p<\infty$ if and only if $H$ has no p-wave resonance. If a p-wave resonance is present, then $W_\pm$ are bounded for $1<p\le 2$ and unbounded for $2<p<\infty$. The endpoint cases $p=1$ and $p=\infty$, for all possible
zero-energy spectral configurations, were settled in \cite{CMY-2D}.

\medskip
\noindent\textbf{Dimension $n=3$.}
If zero is a regular point, Yajima \cite{Yajima-JMSJ-95} established the boundedness of $W_\pm$ on $L^p(\mathbb R^3)$ for the full range $1\le p\le\infty$. Beceanu and Schlag \cite{Beceanu-Schlag-20} later obtained the same conclusion under substantially weaker assumptions on the potential.

If a zero-energy resonance is present, Yajima \cite{Yajima-2016,Yajima-2016-3d} proved that $W_\pm$ are bounded on $L^p(\mathbb R^3)$ for $1<p<3$, but are unbounded on $L^1(\mathbb R^3)$ and on $L^p(\mathbb R^3)$ for $3\le p\le\infty$.

Suppose instead that zero is an eigenvalue but no zero-energy resonance is present. Then $W_\pm$ are bounded on $L^p(\mathbb R^3)$ for $1\le p<3$. Moreover, they are bounded for $3\le p<\infty$ if and only if every zero-energy eigenfunction $\psi$ satisfies
\begin{equation}\label{vanishing-condi}
\int_{\mathbb R^3} x^\alpha V(x)\psi(x)\,dx=0,
\qquad |\alpha|\le 1.
\end{equation}
If this condition fails, $W_\pm$ are unbounded on $L^p(\mathbb R^3)$ for $3\le p<\infty$. Yajima \cite{Yajima-2016-3d} and Erdo\u{g}an-Green-Lamaster \cite{Erd-Gre-arxiv} further proved that $W_\pm$ are bounded on $L^\infty(\mathbb R^3)$ provided that condition \eqref{vanishing-condi} holds for all $|\alpha|\le 2$ and for every zero-energy eigenfunction $\psi$.

\medskip
\noindent\textbf{Dimension $n=4$.}
When zero is a regular point, Yajima \cite{Yajima-1995-even} proved that $W_\pm$ are bounded on $L^p(\mathbb R^4)$ for all $1\le p\le\infty$. If a zero-energy resonance is present, Yajima \cite{Yajima-2022-4} showed that $W_\pm$ are bounded on $L^p(\mathbb R^4)$ for $1<p\le 2$ and unbounded for $2<p\le\infty$.

Suppose that zero is an eigenvalue but no resonance is present. The results of Jensen--Yajima \cite{Jensen-Yajima-08-PLMS} and Goldberg--Green \cite{Goldberg-Green-2017} imply that $W_\pm$ are bounded on $L^p(\mathbb R^4)$ for $1\le p<4$ in general. If every zero-energy eigenfunction $\psi$ satisfies the analogue of condition \eqref{vanishing-condi} in $\mathbb R^4$, then $W_\pm$ are bounded on $L^p(\mathbb R^4)$ for every $1\le p<\infty$. Conversely, Yajima \cite{Yajima-2022-4} proved that, if this first-moment condition fails, then $W_\pm$ are unbounded on $L^p(\mathbb R^4)$ for $4<p<\infty$.
Erdo\u{g}an-Green-Lamaster \cite{Erd-Gre-arxiv} obtained the sufficient condition for $L^\infty$-boundedness.

\medskip
\noindent\textbf{Dimensions $n\ge 5$.}
In these dimensions, non-$L^2$ zero-energy resonances do not occur. If zero is a regular point, the wave operators are bounded on $L^p(\mathbb R^n)$ throughout the full range $1\le p\le\infty$; see \cite{Finco-Yajima-2006-even,Yajima-JMSJ-95,Yajima-1995-even,Yajima-JMSJ-2006}.

If zero is an eigenvalue, Finco--Yajima \cite{Finco-Yajima-2006-even,Yajima-JMSJ-2006} and Goldberg--Green \cite{Goldberg-Green-2016} proved that $W_\pm$ are bounded on $L^p(\mathbb R^n)$ for $1\le p<n/2$ without additional cancellation assumptions. If every zero-energy eigenfunction $\psi$ satisfies
\[
\int_{\mathbb R^n} V(x)\psi(x)\,dx=0,
\]
then $W_\pm$ are bounded for $1\le p<n$. If, in addition,
\[
\int_{\mathbb R^n} x_j V(x)\psi(x)\,dx=0,
\qquad j=1,\ldots,n,
\]
then they are bounded for every $1\le p<\infty$. In \cite{Erd-Gre-arxiv}, the authors obtain $L^\infty$-boundedness under the condition
$\int_{\mathbb{R}^n} x^{\alpha} V(x)\psi(x)\,dx=0$ for all $|\alpha|\le 2$.  Yajima \cite{Yajima-2016} further showed that these moment conditions are also necessary in the corresponding upper ranges: the zeroth-moment condition is necessary for $n/2\le p<n$, while the zeroth- and first-moment conditions are necessary for $n\le p<\infty$.

\subsection{Outline of the argument}
The two endpoint questions have different obstructions. In dimension
four, the resonant block has a nonzero zeroth moment. Testing the
stationary formula on functions with uniformly bounded $L^1$ norms
and shrinking Fourier holes at zero produces a $\log\log k$
divergence; the eigenvalue terms do not contribute to that leading
coefficient.

In the pure eigenvalue case, the leading pole produces the finite-rank
operator $\mathcal P_e\chi_0(\sqrt{-\Delta})$, modulo a bounded
remainder in the upper $L^p$ range. Its input profiles have successive
monopole, dipole, and harmonic quadratic tails. Their integrability
thresholds give the critical exponents $n/2$, $n$, and $\infty$.
The full inverse expansion also contains a correction in the eigenspace.
We retain this correction and prove that it belongs to the critical
dual spaces, thereby ruling out cancellation of the leading
obstructions.

\subsection{Notation and organization}
We use the following conventions throughout the paper:
\begin{itemize}
\item $A\lesssim B$ means $A\le CB$ for some constant $C>0$
independent of the relevant variables; $f(x)=\mathcal{O}(g(x))$ means
$|f(x)|\le C|g(x)|$. The notation $a+$, respectively $a-$, denotes
$a+\varepsilon$, respectively $a-\varepsilon$, for some arbitrarily
small fixed $\varepsilon>0$, and
$\langle x\rangle=(1+|x|^2)^{1/2}$.
\item For $1\le p\le\infty$, $L^p=L^p(\R^n;\C)$; for
$\sigma\in\R$, $L^2_\sigma(\R^n)=\{f:\langle x\rangle^\sigma
f\in L^2(\R^n)\}$; $L^{p,q}(\R^n)$ denotes the Lorentz space, and
$L^{1,\infty}(\R^n)$ the weak $L^1$ space.
\item For Banach spaces $X$ and $Y$, $\mathbb B(X,Y)$ denotes the
space of bounded linear operators from $X$ to $Y$, and
$\mathbb B(X)=\mathbb B(X,X)$.
\item The Fourier transform and its inverse are normalized as
\[
\widehat f(\xi)=\int_{\R^n}e^{-ix\cdot\xi}f(x)\,dx,
\qquad
\check g(x)=\frac{1}{(2\pi)^n}\int_{\R^n}e^{ix\cdot\xi}g(\xi)\,d\xi.
\]
\item The $L^2$ inner product is linear in its first argument:
\(\langle f,g\rangle=\int f\overline g\).
We use $\mathcal P_e$ for the orthogonal projection onto $\ker H$
in $L^2$; the auxiliary projections $S_1,S_2$ are distinct from it.

\item We denote by
\[
f^{(k)}(z)=\frac{d^k}{dz^k}f(z)
\]
the $k$-th derivative of a function $f$. We write
$f(\lambda)=\mathcal{O}_m(g(\lambda))$ for
$\lambda\in\Omega\subset\mathbb{R}$ if
\begin{equation*}
	\left|f^{(k)}(\lambda)\right|\lesssim_k |g(\lambda)|\,|\lambda|^{-k},
	\qquad \lambda\in\Omega,\quad k=0,1,\dots,m.
\end{equation*}
The notation extends naturally to $m=\infty$; for brevity, we also write
$\mathcal{O}(g(\lambda))=\mathcal{O}_0(g(\lambda))$.
\end{itemize}

The argument is organized as follows. Section~\ref{sec:pre} fixes
the stationary representation and free-resolvent estimates.
Section~\ref{sec:resonant} treats the four-dimensional resonant case.
Section~\ref{sec:eigenvalue} proves Theorem~\ref{thm:main-infty}.
We first bound the regular contribution, then analyze the leading
eigenvalue term, and finally show that the remaining correction
does not change the boundedness criterion.
Section~\ref{sec:exp of lemma} supplies the block-matrix inversion.

\section{Preliminaries}\label{sec:pre}

By the symmetry $\overline{W_+f}=W_-\overline{f}$, it suffices to study
$W_- \circ \chi_0(\sqrt{-\Delta})$. Our proof begins with the
stationary representation of the wave operators (see, e.g., Kuroda
\cite{Kuroda}):
\begin{equation}\label{eq-stationary formula}
\begin{aligned}
W_- \circ \chi_0(\sqrt{-\Delta})f
&=
\chi_0(\sqrt{-\Delta})f-\frac{1}{\pi i}
\int_0^\infty
\lambda R_V^+(\lambda^2)V
\bigl(R_0^+(\lambda^2)-R_0^-(\lambda^2)\bigr)f
\,\chi_0(\lambda)d\lambda  \\
&=:\chi_0(\sqrt{-\Delta})f-\mathcal W^{\mathrm{low}}f .
\end{aligned}
\end{equation}
Here $R_V^\pm(\lambda^2)$ denotes the boundary resolvent
$(H-(\lambda^2\pm i0))^{-1}$ and $R_0^\pm(\lambda^2)$ denotes the corresponding boundary resolvent of $(-\Delta-(\lambda^2\pm i0))^{-1}$.
Since $\chi_0(\sqrt{-\Delta})$ is a smooth compactly supported
Fourier multiplier, it is bounded on $L^p$ for every $1\le p\le\infty$.
It therefore remains to analyze $\mathcal W^{\mathrm{low}}$. Set
\begin{equation}\label{eq:def-of-M}
M^{+}(\lambda)=U+vR_0^+(\lambda^2)v,
\qquad v=\sqrt{|V|},\qquad w=Uv,
\end{equation}
where $U$ is multiplication by
\[
U(x)=\begin{cases}1,&V(x)\ge0,\\-1,&V(x)<0.\end{cases}
\]
Thus $U^2=I$, $V=Uv^2$, and $vw=wv=V$. The symmetric resolvent identity yields
\[
R_V^+(\lambda^2)V
=R_0^+(\lambda^2)vM^{+}(\lambda)^{-1}v,
\]
which gives
\begin{equation}\label{eq:rep-for-low}
\mathcal W^{\mathrm{low}}f=\frac{1}{\pi i}
\int_0^\infty
\lambda R_0^+(\lambda^2)vM^{+}(\lambda)^{-1}v
\bigl(R_0^+(\lambda^2)-R_0^-(\lambda^2)\bigr)f
\,\chi_0(\lambda)d\lambda.
\end{equation}

\subsection{Free resolvents and symbol estimates}\label{sec:free-r}
Let $r=|x-y|$, $z=\lambda r$, and $\nu=n/2-1$.
The free-resolvent kernels are
\[
R_0^\pm(\lambda^2)(x,y)
=\frac{\pm i}{4}\left(\frac{\lambda}{2\pi r}\right)^\nu
H_\nu^{(\pm)}(z),
\]
where $H_\nu^{(+)}$ and $H_\nu^{(-)}$ are the Hankel functions
of the first and second kind, respectively. In particular,
\[
R_0^\pm(\lambda^2)(x,y)=\frac{e^{\pm i\lambda r}}{4\pi r}
\qquad(n=3).
\]
The small-argument expansions of the Hankel functions
\cite{Abra,GR02} give, for $0<z\lesssim1$,
\begin{equation}\label{eq:kernel-est-1}
\begin{aligned}
R_0^\pm(\lambda^2)(x,y)
&=\lambda^{n-2}\left[
\sum_{j=0}^{(n-3)/2}b_jz^{2j+2-n}
+a_0^\pm+\Gamma_0^\pm(z)\right], && n\text{ odd},\\
R_0^\pm(\lambda^2)(x,y)
&=\lambda^{n-2}\left[
\sum_{j=0}^{n/2-2}b_jz^{2j+2-n}
+b\log z+a_0^\pm+\Gamma_0^\pm(z)\right], && n\text{ even}.
\end{aligned}
\end{equation}
Here all coefficients depend on $n$, the $b_j$ and $b$ are real,
and $a_0^- =\overline{a_0^+}$. The remainders satisfy
\[
\Gamma_0^\pm(z)=
\begin{cases}
\mathcal O_\infty(z),&n\text{ odd},\\
\mathcal O_\infty\bigl(z^2(1+|\log z|)\bigr),&n\text{ even}.
\end{cases}
\]
The kernel $b_j|x-y|^{2j+2-n}$ is that of
$(-\Delta)^{-j-1}$ for the indices in the finite sums.
In particular, the even-dimensional expansion contains a logarithmic
term, which will be included in the remainder when we estimate
the small-argument contribution below.

With the cutoff functions introduced in \eqref{cutoff-fun}, one has
\begin{align}\label{eq:kernel-est-2}
&R_0^\pm(\lambda^2)(x,y)\chi(z)=\frac{e^{\pm iz}}{|x-y|^{n-2}}\omega_{0}^{\pm}(z),
\\
\label{eq:kernel-est-3}
&R_0^\pm(\lambda^2)(x,y)\tilde\chi(z)=\frac{e^{\pm iz}}{|x-y|^{n-2}} (z)^{\frac{n-3}{2}}\omega_1^{\pm}(z),
\\
\label{eq:kernel-est-4}
&R_0^+(\lambda^2)(x,y)-R_0^-(\lambda^2)(x,y)=\frac{e^{iz}}{|x-y|^{\frac{n-1}{2}}} \lambda^{\frac{n-3}{2}}\omega_+(z)+\frac{e^{-iz}}{|x-y|^{\frac{n-1}{2}}}\lambda^{\frac{n-3}{2}}\omega_-(z),
\end{align}
with
\begin{equation}\label{eq:est-for-omega}
\omega_0^\pm(z),\ \omega_1^\pm(z)=\mathcal{O}_{\infty}(1), \quad  \omega_+(z)=-\overline{\omega_-}(z)=\mathcal{O}_{\infty}(1).
\end{equation}
Further details on \eqref{eq:kernel-est-1}--\eqref{eq:est-for-omega} can be found in \cite{Goldberg-Green-dis-13, Goldberg-Green-dis-17, J84, Yajima-1995-even}.

Fix a sufficiently small $\lambda_0>0$ and let $k_0=[(n+3)/2]$.
We write $E(\lambda)=\widetilde{\mathcal O}(\lambda^\alpha)$ if
\[
\sup_{0\le k\le k_0}\sup_{0<\lambda<\lambda_0}
\lambda^{k-\alpha}|\partial_\lambda^kE(\lambda)(x,y)|
\]
is the kernel of a bounded operator on $L^2$. An operator is
\emph{absolutely bounded} if the absolute value of its kernel
defines a bounded operator on $L^2$. Bounded multiplication
operators are included through their diagonal kernels.

Assume $|V(x)|\lesssim\langle x\rangle^{-\beta}$ with $\beta>n+4$.
Set $G_0=(-\Delta)^{-1}$,
$P f=\|V\|_1^{-1}v\langle f,v\rangle$, and $T=U+vG_0v$.
Then the expansions in \eqref{eq:kernel-est-1}  give
\begin{equation}\label{eq:exp of M}
\begin{aligned}
M^+(\lambda)&=T+\lambda a_0^+P+\lambda^2vG_1v+r(\lambda),&&n=3,\\
M^+(\lambda)&=T+\lambda^2(a_0^++b\log\lambda)P
+\lambda^2vG_1v+r(\lambda),&&n=4,\\
M^+(\lambda)&=T+\lambda^2vG_1v+r(\lambda),&&n\ge5,
\end{aligned}
\end{equation}
where the constants multiplying $P$ include its normalization and
\[
G_1(x,y)=\begin{cases}
\displaystyle\frac{|x-y|^{4-n}}{2(n-4)(n-2)|\mathbb S^{n-1}|},&n\ne4,\\[4pt]
\displaystyle-\frac1{8\pi^2}\log|x-y|,&n=4.
\end{cases}
\]
The remainder satisfies $r(\lambda)=\widetilde{\mathcal O}(\lambda^3)$.
To verify the absolute symbol estimate, write the kernel of
$r(\lambda)$ as $v(x)e(\lambda,|x-y|)v(y)$.
The small-argument expansion and the large-argument symbol estimates
give, for $0\le k\le k_0$,
\[
|\partial_\lambda^k e(\lambda,r)|
\lesssim\lambda^{3-k}\left(\langle r\rangle^2+
\mathbf1_{\{r<1\}}r^{\min\{4-n,0\}}\right).
\]
The local term is integrable in the difference variable, so its
sandwich by the bounded function $v$ is bounded on $L^2$ by
Young's inequality. The remaining term is dominated by
$C v(x)\langle x\rangle^2v(y)\langle y\rangle^2$ and is
Hilbert--Schmidt, since $\beta>n+4$ implies
$\langle x\rangle^2v\in L^2$. These two majorants establish
the stated absolute symbol estimate for all the required derivatives.

For an operator $\mathcal{N}$, we write $\mathcal{N}(x,y)$ for its integral kernel. Such a
kernel is called \emph{admissible} if
\begin{equation*}
\sup_{x\in\mathbb{R}^n}\int_{\mathbb{R}^n}|\mathcal{N}(x,y)|\,dy
+\sup_{y\in\mathbb{R}^n}\int_{\mathbb{R}^n}|\mathcal{N}(x,y)|\,dx<\infty.
\end{equation*}
Schur's test shows that the operator with such kernel is bounded on $L^p(\mathbb{R}^n)$ for every $1\le p\le\infty$. In particular, $\sup_{y\in\mathbb{R}^n}\int_{\mathbb{R}^n}|\mathcal{N}(x,y)|\,dx<\infty$ implies $L^1$-boundedness of $\mathcal N$ and $\sup_{x\in\mathbb{R}^n}\int_{\mathbb{R}^n}|\mathcal{N}(x,y)|\,dy$ implies the $L^\infty$-boundedness of $\mathcal N$.

\subsection{Technical lemmas}
We use the following two estimates from \cite[Lemmas~2.1 and 2.4]{CMY-2D}.
\begin{lemma}\label{lem-oscillatory estimate}
Let $0<\lambda_0<1$, and suppose that
$m(\lambda)= \mathcal{O}_{[b]+2}\big(|\log\lambda|^{-a} \lambda^b\big)$
for $\lambda\in (0,\lambda_0)$ and some fixed $a\ge 0$ and $b>-1$. Then
\begin{equation}\label{eq.oscillatory-est}
\left|\int_0^\infty e^{i\lambda x} m(\lambda) \chi_0(\lambda)
\, d\lambda\right| \lesssim_{m,\lambda_0}
\frac{1}{\langle x\rangle^{b+1}\log^{a}(2+|x|)}, \quad x \in \mathbb{R},
\end{equation}
where the cutoff function $\chi$ is given in \eqref{cutoff-fun}.
\end{lemma}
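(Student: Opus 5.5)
The plan is to split the frequency integral at the natural scale $\lambda\sim|x|^{-1}$. The part below that scale is estimated trivially, and the part above it by repeated integration by parts. For $|x|\le 2$ there is nothing to do. Since $0<\lambda<\lambda_0<1$, we have $|\log\lambda|^{-a}\le(\log\lambda_0^{-1})^{-a}$, so $|m(\lambda)|\lesssim\lambda^b$ with $b>-1$. Hence the integral is bounded by $\int_0^{\lambda_0}\lambda^b\,d\lambda<\infty$, which is comparable to the right-hand side of \eqref{eq.oscillatory-est} when $|x|\le2$.

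Now fix $|x|>2$ and write $1=\chi(\lambda|x|)+\tilde\chi(\lambda|x|)$, with $\chi,\tilde\chi$ as in \eqref{cutoff-fun}.

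\emph{Low-frequency piece.} The term with $\chi(\lambda|x|)$ lives on $0<\lambda\le|x|^{-1}$, and we bound it in absolute value:
\[
\int_0^{1/|x|}\lambda^b|\log\lambda|^{-a}\,d\lambda\lesssim |x|^{-b-1}\log^{-a}|x|.
\]
This holds because $b>-1$ and $|\log\lambda|\ge\log|x|$ on this range. (Substitute $\lambda=s/|x|$ and use $|\log(s/|x|)|\ge \log|x|$ for $s\le1$.)

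\emph{High-frequency piece.} The term with $\tilde\chi(\lambda|x|)$ lives on $\frac1{2|x|}\le\lambda\le\lambda_0$. Set $N=[b]+2$, so that $b-N<-1$. Integrate by parts $N$ times using $e^{i\lambda x}=(ix)^{-1}\partial_\lambda e^{i\lambda x}$. No boundary terms arise: $\tilde\chi(\lambda|x|)$ vanishes near $\lambda=\frac1{2|x|}$, and $\chi_0$ vanishes near $\lambda_0$. By Leibniz, $\partial_\lambda^N$ falls on $m(\lambda)\tilde\chi(\lambda|x|)\chi_0(\lambda)$, and we treat the three factors in turn:
\begin{enumerate}
\item[(1)] each derivative of $m$ costs $\lambda^{-1}$, by the hypothesis $m=\mathcal O_{[b]+2}(|\log\lambda|^{-a}\lambda^b)$;
\item[(2)] each derivative of $\tilde\chi(\lambda|x|)$ produces a factor $|x|$, which is comparable to $\lambda^{-1}$ on its support $\lambda\sim|x|^{-1}$;
\item[(3)] derivatives of $\chi_0$ are $O_{\lambda_0}(1)$ and are supported where $\lambda\sim\lambda_0$, where they are harmless.
\end{enumerate}
Hence the integrand is bounded by $|x|^{-N}\lambda^{b-N}|\log\lambda|^{-a}$ on $[\frac1{2|x|},\lambda_0]$. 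This gives the bound
\[
|x|^{-N}\int_{1/(2|x|)}^{\lambda_0}\lambda^{b-N}|\log\lambda|^{-a}\,d\lambda .
\]

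The one point needing care, and the main (mild) obstacle, is to evaluate this last integral and keep the logarithm. Since $b-N<-1$, the integral is dominated by its lower endpoint. To see this, split it at $\lambda=|x|^{-1/2}$.
\begin{enumerate}
\item[(a)] On $[\frac1{2|x|},|x|^{-1/2}]$ we have $|\log\lambda|\ge\frac12\log|x|$. The power integral then gives $\lesssim |x|^{N-b-1}\log^{-a}|x|$.
\item[(b)] On $[|x|^{-1/2},\lambda_0]$ we bound $|\log\lambda|^{-a}\lesssim1$. This gives $\lesssim |x|^{(N-b-1)/2}$, which is smaller than the bound in (a) because any positive power of $|x|$ dominates $\log^a|x|$.
\end{enumerate}
Multiplying by $|x|^{-N}$ yields $|x|^{-b-1}\log^{-a}|x|$ for the high-frequency piece. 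Combined with the low-frequency piece and the case $|x|\le2$, this gives \eqref{eq.oscillatory-est}, with constants depending on $m$ and $\lambda_0$.
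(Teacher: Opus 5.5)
Your proof is correct. The paper gives no argument of its own for this lemma; it imports it from \cite[Lemma~2.1]{CMY-2D}. Your argument is the standard one and gives a complete self-contained proof: a trivial bound below the scale $\lambda\sim|x|^{-1}$, then $N=[b]+2$ integrations by parts above it, and a second split at $\lambda=|x|^{-1/2}$ to keep the factor $\log^{-a}|x|$. Note that $N=[b]+2$ is exactly the number of derivatives the hypothesis supplies, and the least number for which $b-N<-1$.

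One wording point could be tightened. $\chi_0$ need not vanish on a left neighbourhood of $\lambda_0$. What removes the upper boundary terms is that every derivative of $\chi_0$ vanishes at $\lambda_0$, while $m^{(j)}$, $j\le N$, stays bounded there.
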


\begin{lemma}\label{lem-Fou-multiplier}
Let $a>0$ and $0<\lambda_0<1$ be fixed. Then
\[
\left|\mathcal{F}^{-1}\!\left(\log^{-a}|\cdot|
\chi_0\!\left(|\cdot|\right)\right)(x)
\right|\les_{a,n,\lambda_0}\frac{1}{|x|^n\log^{a+1}|x|},\quad |x|\gg 1.
\]
Accordingly, the Fourier multiplier operator
$\log^{-a}(\sqrt{-\Delta})\, \chi_0\big(\sqrt{-\Delta}\big)$ is
bounded on $\mathcal{H}^1(\mathbb{R}^n)$, $\mathrm{BMO}(\mathbb{R}^n)$,
and $L^p(\mathbb{R}^n)$ for every $1\le p\le \infty$.
\end{lemma}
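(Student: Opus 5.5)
We argue by a dyadic decomposition in frequency together with a cancellation argument. A crude dyadic bound only gives $|x|^{-n}\log^{-a}|x|$; the extra logarithm comes from subtracting the value of the symbol at the scale $|\xi|\sim|x|^{-1}$. Write $m(\xi)=|\log|\xi||^{-a}\chi_0(|\xi|)$, using $|\log|\xi||$ since $\log|\xi|<0$ on the support (this is how we read the statement). Fix $\varphi\in C_c^\infty(\{1/2<|\xi|<2\})$ with $\sum_{j\in\mathbb Z}\varphi(2^j\xi)=1$ for $\xi\ne0$, and put $m_j(\xi)=m(\xi)\varphi(2^j\xi)$, so that only $j\ge j_0=j_0(\lambda_0)$ occur. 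Set $c_j:=(j\log 2)^{-a}$. On the support of $m_j$ the symbol $2^{-j}\xi\mapsto m(\xi)$, rescaled to unit scale, has all derivatives bounded by $C\,c_j$, uniformly in $j$ (the cutoff $\chi_0$ only matters for finitely many $j$). Consequently
\[
\check m_j(x)=2^{-jn}\Phi_j(2^{-j}x),\qquad |\Phi_j(y)|\lesssim_N c_j\langle y\rangle^{-N}.
\]

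Now fix $|x|\sim 2^J$ with $J\gg1$, so that $\log|x|\sim J$. The key step is to split $m=c_J\chi_0(|\xi|)+(m-c_J\chi_0)$. The first piece contributes $c_J\,\mathcal F^{-1}(\chi_0(|\cdot|))(x)=\mathcal O(J^{-a}|x|^{-N})$, which is harmless. For the second piece we decompose dyadically again and use the following facts on the support of $\varphi(2^j\cdot)$.
\begin{itemize}
\item Mean value theorem: $|m(\xi)-c_J|\lesssim |j-J|\,J^{-a-1}$ for $j_0\le j\le 2J$.
\item The rescaled derivative bounds are again $\lesssim |j-J|J^{-a-1}$, up to the finitely many $j$ where $\chi_0$ is non-constant; those are absorbed into Schwartz tails.
\item For $j>2J$ we only use $|m-c_J|\lesssim J^{-a}$.
\end{itemize}
Summing the resulting kernel bounds gives three ranges:
\begin{itemize}
\item $j\le J$: $\sum_{j\le J}|j-J|J^{-a-1}2^{-jn}(2^{j-J})^{N}\lesssim 2^{-Jn}J^{-a-1}$, using the Schwartz decay of $\Phi$.
\item $J<j\le 2J$: here we use only the trivial $L^1$ bound, $\sum_{J<j\le2J}(j-J)J^{-a-1}2^{-jn}\lesssim 2^{-Jn}J^{-a-1}$.
\item $j>2J$: $J^{-a}\sum_{j>2J}2^{-jn}\lesssim 2^{-2Jn}J^{-a}$, which is negligible.
\end{itemize}
Altogether $|\check m(x)|\lesssim |x|^{-n}\log^{-a-1}|x|$ for $|x|\gg1$.

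The boundedness consequences follow from this kernel bound. Near the origin, $m$ is bounded and compactly supported, so $\check m$ is bounded and smooth; together with the decay just proved, $\check m\in L^1$ because $\int^\infty r^{-1}\log^{-a-1}r\,dr<\infty$ for $a>0$. Convolution with an $L^1$ kernel is bounded on $L^p$ for $1\le p\le\infty$. It is also bounded on $\mathcal H^1$, via the atomic decomposition, since convolution with an $L^1$ function maps atoms to functions with vanishing integral and controlled molecular norm. Equivalently, it commutes with the Riesz transforms, so it preserves $\mathcal H^1=\{f\in L^1: R_jf\in L^1\}$. By duality, using that $\check m$ is radial and real up to conjugation, it is bounded on $\mathrm{BMO}$.

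The main obstacle is precisely the logarithmic gain. The naive dyadic estimate loses exactly one logarithm, and the gain comes only from replacing $c_j$ by $c_j-c_J$ and exploiting $|c_j-c_J|\lesssim|j-J|J^{-a-1}$. Two points need care. First, the low-frequency tail $j>J$ must be handled by an $L^1$ bound rather than by the Schwartz decay, since $2^{-j}|x|\lesssim1$ there. Second, the constant part must recombine into the single Schwartz kernel $c_J\mathcal F^{-1}\chi_0$, which requires subtracting $c_J\chi_0$ rather than $c_J$ alone.
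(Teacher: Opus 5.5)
The paper does not prove this lemma; it quotes it from \cite[Lemma~2.4]{CMY-2D}, so your Littlewood--Paley argument is a self-contained substitute rather than a reconstruction. Its strategy is sound. Subtracting $c_J\chi_0(|\xi|)$ at the scale $|\xi|\sim|x|^{-1}$ costs only the Schwartz kernel $c_J\mathcal F^{-1}\chi_0$. The three ranges $j\le J$ (Schwartz tails), $J<j\le2J$ and $j>2J$ (trivial $L^1$ bounds) then give $2^{-Jn}J^{-a-1}$. Your subtraction is a discrete form of differentiating the symbol. An equivalent and shorter route is $x_k\check m=\mathcal F^{-1}(i\partial_km)$. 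Since $\partial_k(-\log|\xi|)^{-a}=a(-\log|\xi|)^{-a-1}\xi_k|\xi|^{-2}$ is an order $-1$ symbol that already carries the improved logarithm, the crude dyadic count suffices there. Its low-frequency tail $\sum_{j>J}2^{-j(n-1)}j^{-a-1}$ is geometric because $n\ge2$, so no bookkeeping of $c_j-c_J$ is needed.

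One intermediate estimate is false as written and must be repaired, although the conclusion survives. On $\mathrm{supp}\,\varphi(2^j\cdot)$, the bound $|m(\xi)-c_J|\lesssim|j-J|J^{-a-1}$ fails for $j$ much smaller than $J$. For fixed $j$ and $J\to\infty$ the left side stays of size $j^{-a}$, while the right side is of order $J^{-a}\to0$. The mean value theorem only gives $(1+|j-J|)\min\{j,J\}^{-a-1}$. Even at $j=J$ you need the factor $1+|j-J|$, because there both $m-c_J$ and the rescaled derivatives are of size $J^{-a-1}$, not zero. The fix is easy. Use $(1+|j-J|)J^{-a-1}$ only for $J/2\le j\le2J$, and for $j<J/2$ use the crude bound $O(1)$ on the rescaled $C^N$ norms. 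That range then contributes $2^{-Jn}\sum_{j<J/2}2^{-(J-j)(N-n)}\lesssim2^{-Jn}2^{-J(N-n)/2}$, which is negligible, so your final bound $|x|^{-n}\log^{-a-1}|x|$ stands.

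In the $\mathcal H^1$ step, convolving an atom with an arbitrary $L^1$ kernel need not give a molecule with controlled norm. Rely instead on your Riesz-transform argument, or on $M_\phi(K*f)\le|K|*M_\phi f$ in the maximal characterization. The $\mathrm{BMO}$ bound is then the adjoint statement. That is the correct meaning here, since the kernel decay alone does not make convolution with an arbitrary $\mathrm{BMO}$ function absolutely convergent.
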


The following is from \cite[Lemma 3.8]{GV06}.
\begin{lemma}\label{lemma-GV}
Let $n\ge 1$, $k<n$ and $k+\ell>n$. Then
\begin{equation*}
\int_{\mathbb{R}^n}|x-y|^{-k}\langle y\rangle^{-\ell}\,dy
\les_{n,k,\ell}  \langle x\rangle^{-\min\{k,\,k+\ell-n\}}.
\end{equation*}
\end{lemma}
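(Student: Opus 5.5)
The statement to prove is Lemma~\ref{lemma-GV}. I would split $\mathbb R^n$ into three regions relative to $x$ and estimate each by elementary radial integration. If $|x|\le 2$, the integral is bounded by a constant. Near $y=x$, the singularity $|x-y|^{-k}$ is integrable because $k<n$. Away from $x$, the integrand is $\lesssim\langle y\rangle^{-k-\ell}$, which is integrable because $k+\ell>n$. It therefore suffices to treat $|x|\ge2$ and show the bound $\langle x\rangle^{-\min\{k,k+\ell-n\}}$ there.

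For $|x|\ge2$ I use the regions
\[
A=\{|y-x|<|x|/2\},\quad
B=\{|y|<|x|/2\},\quad
C=\mathbb R^n\setminus(A\cup B).
\]

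\emph{Region $A$.} Here $\langle y\rangle\sim\langle x\rangle$, so the contribution is
\[
\lesssim \langle x\rangle^{-\ell}\int_{|y-x|<|x|/2}|x-y|^{-k}\,dy
\lesssim \langle x\rangle^{-\ell}|x|^{n-k}
=\langle x\rangle^{-(k+\ell-n)}.
\]

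\emph{Region $B$.} Here $|x-y|\sim|x|$, so the contribution is
\[
\lesssim |x|^{-k}\int_{|y|<|x|/2}\langle y\rangle^{-\ell}\,dy .
\]
The remaining integral is $\lesssim 1$ if $\ell>n$, $\lesssim\log|x|$ if $\ell=n$, and $\lesssim|x|^{n-\ell}$ if $\ell<n$. This gives $|x|^{-k}$ in the first case and $|x|^{-(k+\ell-n)}$ in the third. The case $\ell=n$ gives $|x|^{-k}\log|x|$. I would either note that the lemma is really used with $\ell\neq n$, or accept an $\epsilon$-loss there.

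\emph{Region $C$.} Here $|x-y|\gtrsim|x|$ and $|y|\gtrsim|x|$, and also $|x-y|\lesssim|y|$ since $|y|\ge|x|/2$. Hence the integrand is $\lesssim |x-y|^{-k}|y|^{-\ell}\lesssim |y|^{-(k+\ell)}$ when $\ell\ge0$. Integrating over $|y|\gtrsim|x|$ gives $|x|^{n-k-\ell}$, which is convergent because $k+\ell>n$. If $\ell<0$, I would use $|x-y|^{-k}\lesssim|y|^{-k}$ for $k\ge0$ instead, reaching the same conclusion.

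Collecting the three regions gives $\langle x\rangle^{-\min\{k,k+\ell-n\}}$. The only delicate point is the borderline $\ell=n$, where a logarithm appears in region $B$. I would state that case with the logarithmic factor, or absorb it into a slightly smaller exponent.
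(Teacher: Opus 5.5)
The paper does not prove this lemma; it quotes it from \cite[Lemma~3.8]{GV06}. Your three-region decomposition is the standard proof of such convolution bounds, and it is correct for $\ell\neq n$. Two remarks follow.

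\textbf{The borderline $\ell=n$.} The case you flag is a genuine defect of the statement, not of your method. If $\ell=n$, then $k+\ell>n$ forces $k>0$. For $|x|\ge2$ and $|y|<|x|/2$ one has $|x-y|\le 3|x|/2$, so
\[
\int_{\mathbb R^n}|x-y|^{-k}\langle y\rangle^{-n}\,dy\ \ge\ (3|x|/2)^{-k}\int_{|y|<|x|/2}\langle y\rangle^{-n}\,dy\ \gtrsim\ |x|^{-k}\log(|x|/2).
\]
This is not $\lesssim\langle x\rangle^{-k}=\langle x\rangle^{-\min\{k,k+\ell-n\}}$. The lemma therefore needs $\ell\neq n$, or an $\varepsilon$-loss there, exactly as you suggest. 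This is harmless for the paper. Its only application, in Lemma~\ref{lem:L1-bound-of-K}, has $n=4$, $\ell=\beta>4$ and $k\in\{0,1,3\}$, and only a uniform bound is needed.

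\textbf{Region $C$.} For $k>0$ your justification points the wrong way. Replacing $|x-y|^{-k}$ by $|y|^{-k}$ requires $|y|\lesssim|x-y|$, not $|x-y|\lesssim|y|$. Both inequalities do hold in $C$: $|y|\le|x|+|x-y|\le3|x-y|$ and $|x-y|\le|x|+|y|\le3|y|$. Hence $|x-y|\sim|y|$ there, and the bound by $|y|^{-(k+\ell)}$ holds for either sign of $k$. Your separate branch for $\ell<0$ is vacuous, since $k<n<k+\ell$ forces $\ell>0$.
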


\section{The four-dimensional resonant case}\label{sec:resonant}

This section is devoted to the proof of Theorem~\ref{thm:main-4d}. In view of the representation~\eqref{eq-stationary formula}, it suffices to show that $\mathcal{W}^{\mathrm{low}}$ is unbounded on $L^{1}(\R^{4})$ whenever $H$ has a resonance at zero energy.

To study the $L^{1}$-unboundedness of $\mathcal{W}^{\mathrm{low}}$, we recall the small-energy asymptotic expansion of $M^{+}(\lambda)^{-1}$ in four dimensions established in \cite[Propositions~3.4 and 5.3]{Er-Gold-G-13}. We follow the notation of~\cite{Er-Gold-G-13}: $G_{0}=(-\Delta)^{-1}$; $G_{1}$ is the operator with integral kernel
\[
G_{1}(x,y)=b_1\log|x-y|=-\frac{1}{8\pi^{2}}\log|x-y|;
\]
$P$ is the rank-one projection $Pf=\|V\|_{L^{1}}^{-1}v\langle f,v\rangle$; and $T=U+vG_{0}v$. Let $S_{1}$ be the Riesz projection onto the kernel of $T$ on $L^{2}(\R^{4})$, and let $S_{2}$ be the Riesz projection onto the kernel of $S_{1}PS_{1}$ on $S_{1}L^{2}(\R^{4})$. \emph{It has been shown in Section 7 of \cite{Er-Gold-G-13} that $S_1-S_2$ is not zero if there is a resonance of $H$ and $S_2=0$ when zero is not an eigenvalue of $H$}. Define $D_{2}=(S_{2}vG_{1}vS_{2})^{-1}$ as an operator on $S_{2}L^{2}(\R^{4})$, and note that $D_{2}=D_{2}S_{2}=S_2D_2$.
Finally, set $\Gamma_{3}=S_{1}vG_{2}vS_{1}$, where $G_{2}$ is the operator with kernel $c_{2}|x-y|^{2}$ in~\cite{Er-Gold-G-13}.

For constants $a,a_{1},a_{2}\in\R\setminus\{0\}$ and $z,z_{1},z_{2}\in\C\setminus\R$, we write
\begin{align*}
f^{\pm}(\lambda)&=\frac{1}{\lambda^{2}}\,\frac{1}{a\log\lambda+z+\mathcal{O}_{1}(\lambda^{0+})},
\\
g_{1}^{\pm}(\lambda)&=\lambda^{2}\bigl(a_{1}\log\lambda+z_{1}\bigr),
\\
g_{2}^{\pm}(\lambda)&=\lambda^{4}\bigl(a_{2}\log\lambda+z_{2}\bigr).
\end{align*}
(The minus boundary values are the complex conjugates of the plus boundary values. Only the plus sign is used below.)

The coefficient $f_1^+$ in the mixed resonance--eigenvalue case
has the same leading form as $f^+$, with its own constants:
$f_1^+(\lambda)=\lambda^{-2}(a\log\lambda+z+
\mathcal O_1(\lambda^{0+}))^{-1}$, $a\ne0$.

\begin{lemma}\label{lem:exp-of-in-M}
Assume that $|V(x)|\lesssim \langle x\rangle^{-\beta}$ for some $\beta>10$, and let $0<\lambda<\lambda_{0}$ with $\lambda_{0}>0$ fixed and sufficiently small.
\begin{enumerate}[label=\textup{(\roman*)},leftmargin=2.5em,itemsep=0.6ex]
\item If zero is a resonance but not an eigenvalue of $H$, then
\[
M^{+}(\lambda)^{-1}=f^{+}(\lambda)(S_{1}-S_2)+K+\mathcal{L}(\lambda).
\]
\item If zero is both a resonance and an eigenvalue of $H$, then
\[
M^{+}(\lambda)^{-1}=f_{1}^{+}(\lambda)S_{1}SS_{1}+\frac{S_{2}D_{2}S_{2}}{\lambda^{2}}+\frac{g_{2}^{+}(\lambda)}{\lambda^{4}}S_2D_{2}\Gamma_{3}D_{2}S_2+K+\mathcal{L}(\lambda).
\]
\end{enumerate}
In the two expansions above, $S$ is given by
\[
S=S_1-S_2+(S_1-S_2)B_1S_2+S_2B_2(S_1-S_2)+S_2B_3S_2, \quad  B_1, B_2, B_3 \in \mathbb{B}(L^2(\mathbb{R}^4)),
\]
and the absolute values of the kernels of $K$ and $\mathcal{L}(\lambda)$ define $L^2$-bounded operators. Moreover, $\mathcal{L}(\lambda)=\widetilde{\mathcal O}(|\log\lambda|^{-1})$.
\end{lemma}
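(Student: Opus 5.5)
This lemma is essentially the four-dimensional expansion of Erdo\u{g}an--Goldberg--Green \cite[Propositions~3.4 and 5.3]{Er-Gold-G-13}. The plan is to rerun their iterated Jensen--Nenciu inversion scheme \cite{JN01}, starting from the expansion \eqref{eq:exp of M} for $n=4$. Throughout, every error is tracked in the absolute symbol class $\widetilde{\mathcal O}$ rather than in operator norm.

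\textbf{First inversion step.} Write
\[
M^+(\lambda)=T+\lambda^2(a_0^++b\log\lambda)P+\lambda^2vG_1v+r(\lambda),\qquad r(\lambda)=\widetilde{\mathcal O}(\lambda^3),
\]
which holds by the computation following \eqref{eq:exp of M}, using $\beta>10>n+4$. Since zero is not regular, $T$ has a finite-dimensional kernel $S_1L^2$. The operator $T+S_1$ is invertible, and we set $D_0=(T+S_1)^{-1}$, which is absolutely bounded because $T-U$ is absolutely bounded. Put $\tilde M=M^+(\lambda)+S_1$. A Neumann series then gives
\[
\tilde M^{-1}=D_0-\lambda^2(a_0^++b\log\lambda)D_0PD_0+\widetilde{\mathcal O}(\lambda^2),
\]
and the remainder stays in the class $\widetilde{\mathcal O}$ because compositions of absolutely bounded kernels are absolutely bounded.

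\textbf{Reduction to the kernel of $T$.} By the Jensen--Nenciu lemma, $M^+(\lambda)^{-1}=\tilde M^{-1}+\tilde M^{-1}S_1B(\lambda)^{-1}S_1\tilde M^{-1}$, where
\[
B(\lambda)=S_1-S_1\tilde M^{-1}S_1=\lambda^2(a_0^++b\log\lambda)S_1PS_1+\lambda^2S_1vG_1vS_1+\widetilde{\mathcal O}(\lambda^{3-}).
\]
Dividing out the factor $\lambda^2$, the second-level operator is $(a_0^++b\log\lambda)S_1PS_1+S_1vG_1vS_1$.

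\textbf{Case (i).} Here $S_2=0$, so $S_1PS_1$ is a nonzero rank-one operator on $S_1L^2$, and $S_1L^2$ is one-dimensional. The inverse is then a scalar, namely $\lambda^{-2}(a\log\lambda+z+\mathcal O_1(\lambda^{0+}))^{-1}$ with $a\ne0$ coming from $b$ and $\Im z\ne0$ coming from $a_0^+$; this is exactly $f^+(\lambda)$. Substituting back, the cross terms $D_0\cdot S_1 f^+ S_1\cdot D_0$ have two pieces. The $\lambda^2$ corrections in $\tilde M^{-1}$ multiply $f^+$ to give $\widetilde{\mathcal O}(|\log\lambda|^{-1})$, which goes into $\mathcal L$. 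The $\lambda$-independent pieces come from $D_0S_1=S_1$ (since $TS_1=0$) and go into $K$. The leading term is therefore $f^+(S_1-S_2)$.

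\textbf{Case (ii).} A second Jensen--Nenciu step is needed, with $S_2$ as the kernel of $S_1PS_1$ on $S_1L^2$. Apply the lemma to $\lambda^{-2}B(\lambda)+S_2$. The new second-level operator is $S_2vG_1vS_2$, which is invertible with inverse $D_2$ on the eigenspace, as in \cite[Section~7]{Er-Gold-G-13}. This produces the pole $S_2D_2S_2/\lambda^2$. The next correction, of relative order $\lambda^2$ and coming from $G_2$ (or $\Gamma_3$) together with the logarithmic coefficient, gives the term $g_2^+\lambda^{-4}S_2D_2\Gamma_3D_2S_2$. The off-diagonal blocks between $S_1-S_2$ and $S_2$ give the structure of $S$, with bounded $B_j$, and they carry the log-type coefficient $f_1^+$.

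\textbf{Main obstacle.} The hard part is case (ii). To isolate $\lambda^{-2}$ and $\lambda^{-4}g_2^+$ exactly, the expansion of $M^+$ must be carried one order further, to $\lambda^4\log\lambda$. This requires $r(\lambda)$ with $G_2$ split out, and a remainder $\widetilde{\mathcal O}(\lambda^{4+})$ that is absolutely bounded with $k_0$ derivatives. That is where $\beta>10$ enters: it makes $\langle x\rangle^{3+}v\in L^2$, which covers kernels growing like $|x-y|^{2}\log|x-y|$ and their $\lambda$-derivatives. One must also check that all symbol bounds survive the block inversion. I would verify this by writing each block as a finite sum of products of absolutely bounded operators with scalar symbols, so that Leibniz's rule gives the derivative bounds.
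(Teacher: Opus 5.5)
Your route is the paper's own. The paper does not reprove this lemma: it quotes \cite[Propositions~3.4 and 5.3]{Er-Gold-G-13} and remarks only that the same Jensen--Nenciu argument gives the symbol bounds for $k=2,3$ under $\beta>10$. Rerunning that iteration in the class $\widetilde{\mathcal O}$ is exactly what you propose. Two steps of your sketch, however, are justified incorrectly and need repair.

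First, absolute boundedness of $D_0=(T+S_1)^{-1}$ is not a consequence of the absolute boundedness of $T-U$. It is a statement about the kernel of an inverse, and it requires proof. In $\mathbb R^4$ the kernel $v(x)|x-y|^{-2}v(y)$ is not even Hilbert--Schmidt, so the one-step identity $D_0=U-UK_0D_0$ with $K_0=vG_0v+S_1$ does not suffice. Use instead the iterated identity from Section~\ref{sec:block-inverse},
\[
D_0=\sum_{j=0}^{3}(-UK_0)^jU+(UK_0)^2D_0(K_0U)^2,
\]
together with $\|[(UK_0)^2](x,\cdot)\|_2\lesssim v(x)$. That bound holds because the twice-iterated Newton kernel is only logarithmically singular. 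Since $K$ and $\mathcal L(\lambda)$ are assembled from $D_0$, this step is needed for the absolute-kernel assertions.

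Second, in case~(i) the cross terms between $f^+S_1$ and the correction $-\lambda^2(a_0^++b\log\lambda)D_0PD_0$ of $\tilde M^{-1}$ are not $\widetilde{\mathcal O}(|\log\lambda|^{-1})$. Since $D_0S_1=S_1=S_1D_0$, they equal $-\lambda^2(a_0^++b\log\lambda)f^+(\lambda)(D_0PS_1+S_1PD_0)$. Here $\lambda^2(a_0^++b\log\lambda)f^+(\lambda)\to b/a\neq0$, while $PS_1\neq0$. You must therefore put the constant $-\tfrac ba(D_0PS_1+S_1PD_0)$ into $K$ and only the remainder into $\mathcal L$. Note also that the product $D_0S_1f^+S_1D_0$ is the leading term $f^+S_1$, not a contribution to $K$.

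In case~(ii), the same splitting handles the $\lambda^2\log\lambda$ terms meeting $f_1^+S$. The terms meeting $\lambda^{-2}D_2$ vanish because $PS_2=0$, and your sketch must invoke this cancellation. Otherwise an $\mathcal O(\log\lambda)$ term $D_0PD_2$, of a form the lemma does not allow, would survive. In addition, the $S_2$ block itself acquires an $f_1^+$ correction, namely the $S_2B_3S_2$ part of $S$, which comes from the Schur complement through the mixed blocks. So $S$ is not produced by the off-diagonal blocks alone. With these repairs the argument goes through.
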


\begin{remark}
The endpoint argument uses Lemma \ref{lem:exp-of-in-M} through order three,
rather than only the first derivative used in \cite{Er-Gold-G-13}. Although \cite{Er-Gold-G-13} states the remainder estimates
only for $k=0,1$, the same argument readily yields the bounds
for $k=2,3$ under the decay assumption imposed here.
\end{remark}

\begin{remark}
Choose $\lambda_0$ sufficiently small that both inverse expansions
hold throughout the support of $\chi_0$. We use this same cutoff
in the stationary formula and in Theorem~\ref{thm:main-4d}.
No extension of the asymptotic expansion to arbitrary positive
energies is needed.
\end{remark}

\subsection{The regular contribution}
We first show that $K+\mathcal L(\lambda)$ contributes a bounded
operator on $L^1$. We then prove that the remaining singular terms
give an unbounded operator. We present the case in which zero is
both a resonance and an eigenvalue; the case without a zero
eigenvalue follows by omitting the terms involving $S_2$.

\begin{lemma}\label{lem:L1-bound-of-K}
Let $K$ and $\mathcal L(\lambda)$ satisfy the absolute kernel estimates
in Lemma~\ref{lem:exp-of-in-M}. Assume that $|V(x)|\lesssim \langle x\rangle^{-\beta}$ for some $\beta>4$. Then the operator
\begin{equation}\label{eq:rem-part}
\frac{1}{\pi i}\int_{0}^{\infty}\lambda R_{0}^{+}(\lambda^{2})v(K+\mathcal{L}(\lambda))v
\bigl(R_{0}^{+}(\lambda^{2})-R_{0}^{-}(\lambda^{2})\bigr)\,\chi_0(\lambda)\,d\lambda
\end{equation}
is bounded on $L^1(\mathbb{R}^4)$.
\end{lemma}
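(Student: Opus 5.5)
We prove the $L^1$ bound by showing that the kernel of \eqref{eq:rem-part}, written as $\mathcal K(x,y)$, satisfies $\sup_y\int_{\R^4}|\mathcal K(x,y)|\,dx<\infty$. By Schur's test this gives boundedness on $L^1(\R^4)$. Write $\Gamma(\lambda)=K+\mathcal L(\lambda)$, so that
\[
\mathcal K(x,y)=\frac{1}{\pi i}\int_{\R^8}\int_0^\infty \lambda R_0^+(\lambda^2)(x,u)\,v(u)\Gamma(\lambda)(u,w)v(w)\,\bigl(R_0^+-R_0^-\bigr)(\lambda^2)(w,y)\,\chi_0(\lambda)\,d\lambda\,du\,dw .
\]
For fixed $u,w$ we analyse the $\lambda$-integral. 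We split the outgoing resolvent with $\chi(\lambda|x-u|)+\tilde\chi(\lambda|x-u|)$ and use \eqref{eq:kernel-est-2}--\eqref{eq:kernel-est-3} with $n=4$. We use \eqref{eq:kernel-est-4} for the spectral difference, which equals $\lambda^{1/2}|w-y|^{-3/2}e^{\pm i\lambda|w-y|}\omega_\pm$. The difference also vanishes to order $\lambda^2$ when $\lambda|w-y|\lesssim1$, since $\mathrm{Im}\,R_0^+$ is smooth: its kernel is $c\lambda^2 J_1(z)/z$. The integrand then takes the form
\[
\frac{e^{i\lambda(|x-u|\pm|w-y|)}}{|x-u|^{2}}\,m(\lambda;x,u,w,y),
\]
where $m$ has symbol behaviour in $\lambda$. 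Its order is gained from $\lambda\cdot\lambda^{2}$ at small arguments, or from $\lambda\cdot\lambda^{1/2}\langle\lambda|x-u|\rangle^{1/2}|w-y|^{-3/2}$ at large ones. The factor $\Gamma(\lambda)(u,w)$ contributes a symbol of order $0$, with derivatives controlled up to $k_0=3$ by $\widetilde{\mathcal O}(1)$.

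We then apply Lemma~\ref{lem-oscillatory estimate} with phase $|x-u|\pm|w-y|$. Before doing so, we take absolute values inside the $u,w$ integration, which is permitted because $K$ and $\mathcal L$ are absolutely bounded. This gives a bound of the form
\[
|\mathcal K(x,y)|\lesssim \int\!\!\int \frac{v(u)\,|\Gamma|(u,w)\,v(w)}{|x-u|^{2}\,\langle |x-u|\pm|w-y|\rangle^{b+1}}\cdot(\text{weights in }|w-y|)\,du\,dw,
\]
where $|\Gamma|$ is the sup over $\lambda$ of the absolute derivative kernels.

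The $+$ phase is harmless: decay in $|x-u|+|w-y|$ beyond $|x-u|^{-2}$ already makes the $x$-integral converge. The main obstacle is the $-$ phase, where $|x-u|\approx|w-y|$. There the $x$-integral is taken over a spherical shell around $u$ of radius about $|w-y|$. It has measure $\sim|w-y|^3$ times the radial width, and this must be compensated by the $|w-y|^{-3/2}$ factor together with the $\lambda^{1/2}$ growth from $\tilde\chi$. Here we follow the standard argument for the regular 4D case, as in Yajima~\cite{Yajima-1995-even} and Goldberg--Green~\cite{Goldberg-Green-2016}. After integrating by parts two or three times in $\lambda$, the radial profile in $x$ decays like $\langle |x-u|-|w-y|\rangle^{-2}$. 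The shell then contributes
\[
\int_{\R^4}\frac{dx}{|x-u|^{2}\langle |x-u|-|w-y|\rangle^{2}}\cdot\frac{\langle |w-y|\rangle^{1/2}}{|w-y|^{3/2}}\cdot(\ldots),
\]
and in radial coordinates this is $\lesssim \langle w-y\rangle^{0+}$ after balancing the powers. If a residual growth $\langle w\rangle^{c}$ remains, it is absorbed by $v(w)\langle w\rangle^{c}\in L^2$, which requires $\beta>4+2c$. The case $|w-y|\lesssim1/\lambda_0$ uses the $\lambda^2$ vanishing instead, and there the integrand is nonoscillatory with order $b=3$.

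Finally, we obtain the uniform bound in $y$ by integrating over $u$ and $w$. The estimate takes the form $\langle v\,\phi_1,|\Gamma|\,v\,\phi_2\rangle\le\|v\phi_1\|_2\,\||\Gamma|\|_{2\to2}\,\|v\phi_2\|_2$. The weights $\phi_1,\phi_2$ are at most polynomially growing of small degree, uniformly in $y$, because translation in $y$ is absorbed by the shell estimate. The condition $\beta>4$ ensures that $v\langle\cdot\rangle^{0+}\in L^2(\R^4)$, which closes the bound.
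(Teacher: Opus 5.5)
Your overall strategy matches the paper's. You write each piece with phase $\rho_\pm=|x-u|\pm|w-y|$, apply Lemma~\ref{lem-oscillatory estimate} with a $\lambda$-independent absolute majorant for $K+\mathcal L(\lambda)$, integrate in $x$ in polar coordinates about $u$, and close with Cauchy--Schwarz. However, the quantitative core, which is the entire content of this lemma, fails as you state it.

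\emph{The decay is too weak.} Your displayed shell integral equals $c\int_0^\infty r\,\langle r-s\rangle^{-2}\,dr$. This diverges logarithmically as $r\to\infty$ for every $s=|w-y|$, and the same happens for the $+$ phase, so it is not $\lesssim\langle w-y\rangle^{0+}$.

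\emph{Your bookkeeping of $m$ does not help.} Keeping the prefactor $|x-u|^{-2}$ and leaving $\langle\lambda|x-u|\rangle^{1/2}$ inside $m$ only gives a symbol of order $3/2$ with constant $\langle|x-u|\rangle^{1/2}$. The $x$-integral then diverges like $\int^\infty r^{-1+\varepsilon}\,dr$.

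\emph{The fallback for residual growth is invalid.} A weight growing like $\langle w-y\rangle^{c}$ gives $\|v\langle\cdot-y\rangle^{c}\|_2\sim\langle y\rangle^{c}$ after Cauchy--Schwarz. This is not uniform in $y$, and uniformity in $y$ is exactly what the $L^1$ bound requires. Decay of $v$ in $w$ cannot absorb growth in $w-y$.

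The exact orders must be tracked, and they leave no room.
\begin{itemize}
\item On the support of $\chi(\lambda|x-u|)$, the amplitude has order $3/2$ with prefactor $|x-u|^{-2}$. This includes the factor $\lambda$ and the factor $\lambda^{1/2}|w-y|^{-3/2}$ from \eqref{eq:kernel-est-4}.
\item On the support of $\tilde\chi(\lambda|x-u|)$, split $(\lambda|x-u|)^{1/2}=\lambda^{1/2}|x-u|^{1/2}$ and move $\lambda^{1/2}$ into the symbol. This gives order $2$ with prefactor $|x-u|^{-3/2}$.
\item Only three $\lambda$-derivatives of $\mathcal L$ are controlled. So use $b=(3+j)/2-\varepsilon$ with $0<\varepsilon<1/2$, which gives decay $\langle\rho_\pm\rangle^{-(5+j)/2+\varepsilon}$ for $j=0,1$. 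Your $b=3$ would need five derivatives.
\item Then $\int_0^\infty r^{1+j/2}\langle r\pm s\rangle^{-(5+j)/2+\varepsilon}\,dr\lesssim1+s^{1+j/2}$. After multiplying by $s^{-3/2}$, the weight is $|w-y|^{(j-1)/2}+|w-y|^{-3/2}$. This is bounded at infinity, exactly borderline for $j=1$.
\item Finally, $\sup_y\int|V(w)|\bigl(|w-y|^{j-1}+|w-y|^{-3}\bigr)\,dw<\infty$ for $\beta>4$ by Lemma~\ref{lemma-GV}.
\end{itemize}

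Your separate treatment of $\lambda|w-y|\lesssim1$ via the $\lambda^2$ vanishing is unnecessary. The representation \eqref{eq:kernel-est-4} holds uniformly, and $|w-y|^{-3}$ is locally integrable in $\R^4$.
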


\begin{proof}
To prove this lemma, we use the identities \eqref{eq:kernel-est-2}--\eqref{eq:kernel-est-4}.
The kernel of \eqref{eq:rem-part} can be written as a linear combination of terms of the form
\begin{equation}\label{eq:ke-of-W0}
\int_{0}^{\infty}\!\!\iint_{\mathbb{R}^8} e^{i\lambda\rho_{\pm}}
\mathcal{G}_{j}^{\pm}(\lambda, x, z, w, y)\, v(w)(K+\mathcal{L}(\lambda))(z, w) v(z)\, dz\, dw\,
\lambda\chi_0(\lambda)\,d\lambda
\end{equation}
for $j=0,1$, where $\rho_{\pm}=|x-z|\pm |w-y|$ and
\[
\mathcal{G}_{j}^{\pm}(\lambda,x, z, w, y)
=\frac{1}{|x-z|^{2-j/2}}\,\frac{1}{|w-y|^{3/2}}\,
\lambda^{\frac{1+j}{2}}\omega_{j}(\lambda|x-z|)\,\omega_{\pm}(\lambda|w-y|).
\]
By the estimates \eqref{eq:est-for-omega}, for every integer $k\geq0$,
\[
\bigl|\partial_{\lambda}^{k}\bigl(\omega_{j}(\lambda|x-z|)\,\omega_{\pm}(\lambda|w-y|)\bigr)\bigr|
\lesssim\lambda^{-k}
\]
uniformly in $x,y,z,w\in\mathbb{R}^{4}$.

Set
$
N(z,w)=\sup_{0\le k\le3}\sup_{0<\lambda<\lambda_0}\bigl|\lambda^k(\log\lambda)\,\partial_{\lambda}^k\mathcal{L}(\lambda)(z,w)\bigr|.
$
By the absolute symbol estimate for $\mathcal L$, this common
majorant defines a bounded operator on $L^2(\mathbb R^4)$.
Fix $0<\varepsilon<1/2$. Including the additional factor $\lambda$
in \eqref{eq:ke-of-W0}, apply Lemma~\ref{lem-oscillatory estimate}
with $b=(3+j)/2-\varepsilon$. This requires at most three derivatives
and gives
\begin{align*}
&\biggl|\int_{0}^{\infty}e^{i\lambda\rho_{\pm}}\lambda^{\frac{3+j}{2}}\omega_{j}(\lambda|x-z|)\omega_{\pm}(\lambda|w-y|) (K+\mathcal{L}(\lambda))(z, w)\,\chi_0(\lambda)\,d\lambda\biggr|
\\
&\qquad\qquad \lesssim \bigl(|K(z, w)|+|N(z,w)|\bigr)\langle\rho_{\pm}\rangle^{-\frac{5+j}{2}+\varepsilon}.
\end{align*}

Consequently, the $L^{1}$-norm of \eqref{eq:ke-of-W0} in the variable $x$ is bounded by
\begin{align*}
&\int_{\mathbb{R}^{4}}\!\!\int_{\mathbb{R}^{4}} \int_{\mathbb{R}^{4}}\frac{|v(z)v(w)|\bigl(|K(z, w)|+|N(z,w)|\bigr)}{|x-z|^{2-j/2}\,\langle\rho_{\pm}\rangle^{(5+j)/2-\varepsilon}\,|w-y|^{3/2}}\,dz\, dw\,dx
\\
&\quad \lesssim\int_{\mathbb{R}^{4}}\int_{\mathbb{R}^{4}}\frac{|v(z)v(w)|\bigl(|K(z, w)|+|N(z,w)|\bigr)}{|w-y|^{3/2}}\int_{0}^{\infty}\frac{r^{1+j/2}}{\langle r\pm|w-y|\rangle^{(5+j)/2-\varepsilon}}\,dr\,dz\, dw
\\
&\quad \lesssim\int_{\mathbb{R}^{4}}\int_{\mathbb{R}^{4}}|v(z)v(w)|\bigl(|K(z,w)|+N(z,w)\bigr)\bigl(|w-y|^{(j-1)/2}+|w-y|^{-3/2}\bigr)\, dz\, dw
\\
&\quad\lesssim \|v\|_{L^2}\left(\int_{\mathbb{R}^{4}}\langle w\rangle^{-\beta}\bigl(|w-y|^{j-1}+|w-y|^{-3}\bigr)\,dw\right)^{1/2}\lesssim 1,
\end{align*}
uniformly in $y\in\mathbb R^4$. Here polar coordinates give the
radial integral, which is bounded by $C(1+|w-y|^{1+j/2})$.
Cauchy--Schwarz and the $L^2$ bounds for $|K|$ and $N$ then give
the last displayed expression. Its uniform bound follows from
Lemma~\ref{lemma-GV} and the decay of $V$.
The $L^{1}$-boundedness of \eqref{eq:rem-part} now follows from Schur's test.
\end{proof}

\subsection{The resonant obstruction}
At this point, the proof of Theorem~\ref{thm:main-4d} will be complete once we show that
the contribution from the leading terms in part~\textup{(ii)} of Lemma~\ref{lem:exp-of-in-M}
is unbounded on $L^1$. This is the content of the following lemma.

\begin{lemma}\label{lem:unbounded-leading}
Assume that $|V(x)|\lesssim \langle x\rangle^{-\beta}$ for some $\beta>8$. Then the operator
$\mathcal{W}_{m}^{\mathrm{low}}$ defined by
\begin{equation}\label{eq:main-part}
\begin{aligned}
\frac{1}{\pi i}\int_{0}^{\infty}\!\lambda R_{0}^{+}(\lambda^{2})v
\Bigl(f_{1}^{+}(\lambda)S_{1}SS_{1}&+\frac{S_{2}D_{2}S_{2}}{\lambda^{2}}
+\frac{g_{2}^{+}(\lambda)}{\lambda^{4}}S_2D_{2}\Gamma_{3}D_{2}S_2\Bigr)v
\bigl(R_{0}^{+}(\lambda^{2})-R_{0}^{-}(\lambda^{2})\bigr)
\chi_0(\lambda)\,d\lambda
\end{aligned}
\end{equation}
is unbounded on $L^1(\mathbb{R}^4)$.
\end{lemma}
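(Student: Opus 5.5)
The plan is to test $\mathcal W_m^{\mathrm{low}}$ on a family $\{f_k\}$ with $\|f_k\|_{L^1}\lesssim1$ whose Fourier transforms vanish on a shrinking ball around zero, for instance $\widehat{f_k}(\xi)=\widehat\phi(\xi)-\widehat\phi(k\xi)$ with $\phi$ a fixed Schwartz function with $\int\phi=1$. The goal is to show $\|\mathcal W_m^{\mathrm{low}}f_k\|_{L^1}\gtrsim\log\log k$.

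We first isolate the term that carries the divergence, namely the piece $f_1^+(\lambda)S_1SS_1$ restricted to $(S_1-S_2)$ on both sides. Since $(S_1-S_2)\neq0$ and the resonance function $\psi$ satisfies $\langle v,\phi_0\rangle\neq0$ for the corresponding $\phi_0\in(S_1-S_2)L^2$, this piece is a genuine rank-one operator $\phi_0\langle\cdot,\phi_0\rangle$ with nonzero zeroth moment. We then use the expansion \eqref{eq:kernel-est-1} of $R_0^+(\lambda^2)$ near $\lambda=0$ on the left factor. Because $\int v\phi_0\neq0$, the leading part of $R_0^+(\lambda^2)v\phi_0$ is $c\,|x|^{-2}$ as $|x|\to\infty$ uniformly in $\lambda$, up to an $L^1$-summable error. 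On the right, $\langle v(R_0^+-R_0^-)f,\phi_0\rangle$ equals, to leading order in $\lambda$, $c'\lambda^{2}\widehat f$ averaged over the sphere of radius $\lambda$. The divergent contribution then becomes a radial Fourier multiplier of the form
\[
m(\sqrt{-\Delta})=\int_0^\infty \lambda\,\lambda^{2} f_1^+(\lambda)\, R_0^+(\lambda^2)\,\delta(\sqrt{-\Delta}-\lambda)\,\chi_0(\lambda)\,d\lambda ,
\]
and hence essentially the operator $(\log\sqrt{-\Delta}+z/a)^{-1}\chi_0(\sqrt{-\Delta})$ composed with a convolution by a kernel decaying like $|x|^{-4}$. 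The function $1/(a\log\lambda+z)$ is not differentiable in the right way at $0$, so its $L^1$ operator norm on $f_k$ should behave like $\int_{1/k}^{\lambda_0}\frac{d\lambda}{\lambda|\log\lambda|}\sim\log\log k$. We will obtain this lower bound explicitly by computing the kernel on $\{|x|\sim 2^j\}$, $1\le 2^j\le k$: each dyadic shell contributes $\sim 1/j$, and summing gives $\log\log k$.

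All other contributions must then be shown to stay bounded on $\{f_k\}$:
\begin{itemize}
\item the off-diagonal pieces $(S_1-S_2)B_1S_2$, $S_2B_2(S_1-S_2)$ and $S_2B_3S_2$ multiplied by $f_1^+$;
\item the eigenvalue terms $\lambda^{-2}S_2D_2S_2$ and $\lambda^{-4}g_2^+S_2D_2\Gamma_3D_2S_2$;
\item the errors from truncating the resolvent expansions.
\end{itemize}
The key input is that functions in $S_2L^2$ satisfy $\int v\,S_2\phi=0$ (and $P S_2=0$), so each $S_2$ factor gains one power of $\lambda$ through the cancellation $R_0^+(\lambda^2)(x,y)-R_0^+(\lambda^2)(x,0)$. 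This turns each such term into either an admissible kernel handled as in Lemma~\ref{lem:L1-bound-of-K}, using Lemma~\ref{lem-oscillatory estimate} with $b>-1$, or a multiplier of the type in Lemma~\ref{lem-Fou-multiplier}, which is bounded on $L^1$.

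The main obstacle is the $\lambda^{-2}S_2D_2S_2$ term. With one gain from each side it becomes $\lambda^0$ times the symbol, which is only borderline for Lemma~\ref{lem-oscillatory estimate}. We handle it by an exact computation: with $S_2$ on both sides, the term reduces to $\mathcal P_e$ applied to a smooth cutoff, plus a remainder with an extra $\lambda^{0+}$. The finite-rank piece is bounded on $L^1$ in four dimensions, because eigenfunctions decay like $|x|^{-3}$ when $\int V\psi=0$, and it is independent of $\log\lambda$, so it cannot cancel the $\log\log k$ growth. The decay $\beta>8$ supplies the moments needed for these Taylor expansions.
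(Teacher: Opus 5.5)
There is a genuine gap in your second step. Your strategy needs every non-leading piece of $\mathcal W_m^{\mathrm{low}}$ to be $\mathcal O(1)$ in $L^1$ on $\{f_k\}$, and the mechanisms you invoke do not give this.

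First, consider the cross terms $f_1^+(\lambda)(S_1-S_2)B_1S_2$ and $f_1^+(\lambda)S_2B_2(S_1-S_2)$. Only one side has vanishing mean. After your one-sided gain, the amplitude between the two resolvents is $\lambda f_1^+(\lambda)\cdot\lambda\sim|\log\lambda|^{-1}$. This is a full power of $\lambda$ short of the $\lambda(K+\mathcal L(\lambda))$ treated in Lemma~\ref{lem:L1-bound-of-K}. Lemma~\ref{lem-oscillatory estimate} then only yields $\langle\rho_\pm\rangle^{-(3+j)/2+\varepsilon}$, and the radial integral $\int^\infty r^{1+j/2}\langle r\pm s\rangle^{-(3+j)/2+\varepsilon}\,dr$ diverges. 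Nor are these terms Fourier multipliers: the Fourier transform of the output at $\xi$ depends on $\widehat f$ on all spheres. So Lemma~\ref{lem-Fou-multiplier} is not available either.

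Second, for $\lambda^{-2}S_2D_2S_2$, the finite-rank piece $\mathcal P_e\chi_0(\sqrt{-\Delta})$ is in general not bounded on $L^1(\mathbb R^4)$. An eigenfunction with a nonzero first moment behaves like $c\,x\cdot m_1|x|^{-4}$, and $|x|^{-3}\notin L^1(\mathbb R^4)$. The paper uses that splitting only for $2\le p\le\infty$, where the remainder is bounded by duality. At $p=1$ only the unsplit eigenvalue term is controlled.

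Your first step is also only heuristic. $R_0^+(\lambda^2)v\phi_0$ is not $c|x|^{-2}$ uniformly in $\lambda$: for $\lambda|x|\gtrsim1$ it oscillates with size $\lambda^{1/2}|x|^{-3/2}$. The $\log\log k$ rate you predict is nonetheless correct.

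The missing idea is that none of these upper bounds is needed. Since $|\langle\mathcal W_m^{\mathrm{low}}f_k,1\rangle|\le\|\mathcal W_m^{\mathrm{low}}f_k\|_{L^1}$, it suffices to show that the single number $\langle\mathcal W_m^{\mathrm{low}}f_k,1\rangle=\mathcal F(\mathcal W_m^{\mathrm{low}}f_k)(0)$ diverges. This number can be read off from the exact Fourier representation of the finite-rank expansion, with no truncation of resolvents.
\begin{itemize}
\item At $\xi=0$, every term with an $S_2$-function $\psi_i$ on the left vanishes identically, because $\widehat{v\psi_i}(0)=\int v\psi_i=0$. This removes both eigenvalue terms and the $S_2B_2(S_1-S_2)$ and $S_2B_3S_2$ parts at once.
\item The terms with $\psi_0$ on the left and $\psi_j\in S_2L^2$ on the right use $\widehat{v\psi_j}(\eta)=\mathcal O(|\eta|)$. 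They are bounded, uniformly in $k$, by $\int_{|\eta|\le\lambda_0}|f_1^+(|\eta|)||\eta|^{-1}\,d\eta\lesssim\int_0^{\lambda_0}|\log r|^{-1}\,dr$.
\item The $\psi_0$--$\psi_0$ term equals $c\log\log k+\mathcal O(1)$, with $c\neq0$ because $\widehat{v\psi_0}(0)\neq0$.
\end{itemize}
This is the paper's argument. It makes both your dyadic-shell computation and all estimates on the non-leading operators unnecessary.
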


\begin{proof}
By the definition of $S_1$ and $S_2$, compactness of $vG_0v$,
and the Riesz--Schauder theory, the projections $S_1$ and $S_2$ are finite-rank operators.
Furthermore, $S_1-S_2$ is a rank-one projection onto the one-dimensional space spanned by
$(S_1-S_2)v$ whenever a resonance is present. Recall that
$
S=S_1-S_2+(S_1-S_2)B_1S_2+S_2B_2(S_1-S_2)+S_2B_3S_2.
$
Set $\psi_0=(S_1-S_2)v/\|(S_1-S_2)v\|_{L^2}$ and let $\{\psi_j\}_{j=1}^{m}$ be an orthonormal
basis of $S_2L^2(\mathbb{R}^4)$. In view of the above, we can represent the operator
$
f_{1}^{+}(\lambda)S_{1}SS_{1}+\frac{S_{2}D_{2}S_{2}}{\lambda^{2}}
+\frac{g_{2}^{+}(\lambda)}{\lambda^{4}}S_2D_{2}\Gamma_{3}D_{2}S_2
$
as
\begin{equation}\label{eq:exp-for-Op}
f_{1}^{+}(\lambda)\sum_{i,j=0}^{m}\alpha_{i,j}^{(1)}\psi_i\langle\cdot,\psi_j\rangle
+\frac{1}{\lambda^2}\sum_{i,j=1}^{m}\alpha_{i,j}^{(2)}\psi_i\langle\cdot,\psi_j\rangle
+\frac{g_{2}^{+}(\lambda)}{\lambda^{4}}\sum_{i,j=1}^{m}\alpha_{i,j}^{(3)}\psi_i\langle\cdot,\psi_j\rangle,
\end{equation}
where $\alpha_{0,0}^{(1)}=1$ and the parameters $\alpha_{i,j}^{(u)}$ for $i,j=0,1,\ldots,m$ and $u=1,2,3$ are fixed constants.

Note that
\begin{equation}\label{eq:Four_trans_on_S}
\mathcal{F}(R_0^+(\lambda^2)g)(\xi)=\frac{\widehat{g}(\xi)}{|\xi|^2-\lambda^2-i0}, \quad 
\frac{\lambda}{\pi i}\bigl(R_0^+(\lambda^{2})-R_0^-(\lambda^{2})\bigr)h(z)
=\bigl(\widehat{h}\,d\sigma_\lambda\bigr)^{\vee}(z),
\end{equation}
where $d\sigma_\lambda$ denotes the induced measure on the sphere $\lambda\mathbb{S}^3$.
Then, applying Plancherel's theorem together with \eqref{eq:exp-for-Op}, we obtain for any Schwartz
function $h$ that
\begin{equation}\label{eq:F-repre-of-Wm}\begin{aligned}
\mathcal{F}(\mathcal{W}_{m}^{\mathrm{low}}h)(\xi)
=&\sum_{i,j=0}^{m}\alpha_{i,j}^{(1)}\frac{\widehat{v\psi_i}(\xi)}{(2\pi)^4}
\lim_{\varepsilon\to 0^+}\int_{\mathbb{R}^4}
\frac{\overline{\widehat{v\psi_j}(\eta)}\,\widehat{h}(\eta)\chi_0(|\eta|)}{|\xi|^2-|\eta|^2-i\varepsilon}\,
f_{1}^{+}(|\eta|)\,d\eta \\
&+\sum_{i,j=1}^{m}\frac{\widehat{v\psi_i}(\xi)}{(2\pi)^4}
\lim_{\varepsilon\to 0^+}\int_{\mathbb{R}^4}
\frac{\overline{\widehat{v\psi_j}(\eta)}\,\widehat{h}(\eta)\chi_0(|\eta|)}{|\xi|^2-|\eta|^2-i\varepsilon}\,
\Bigl[\frac{\alpha_{i,j}^{(2)}}{|\eta|^2}+\frac{\alpha_{i,j}^{(3)}\,g_{2}^{+}(|\eta|)}{|\eta|^4}\Bigr]
d\eta.
\end{aligned}
\end{equation}

Since $S_2v=0$, we have $\int_{\mathbb{R}^4}(\psi_j v)(x)\,dx=0$ for all $j=1,\dots,m$.
Noting that $|v(x)|\lesssim\langle x\rangle^{-4-}$ and $\psi_j\in L^2(\mathbb{R}^4)$, we obtain
$\langle x\rangle(\psi_j v)(x)\in L^1(\mathbb{R}^4)$, and hence
$\widehat{v\psi_j}(\xi)=\mathcal{O}(|\xi|)$ for $j=1,\dots,m$.  Moreover,
\begin{equation}\label{eq:nonzero-vpsi0}
\widehat{v\psi_0}(0)=\int_{\mathbb{R}^4}v(x)\psi_0(x)\,dx
=\frac{\langle v,(S_1-S_2)v\rangle}{\|(S_1-S_2)v\|_{L^2}}
=\|(S_1-S_2)v\|_{L^2}\neq0,
\end{equation}

Suppose, for a contradiction, that $\mathcal W_m^{\mathrm{low}}$ is $L^1$ bounded. For every Schwartz function $h$, the Fourier transform of
$\mathcal W_m^{\mathrm{low}}h$ is then continuous at the origin. Now let $\{h_k\}_{k\ge1}$ be the sequence of Schwartz functions defined by
$
\widehat{h_k}(\xi)=\chi_0(2|\xi|)\bigl(1-\chi(k|\xi|)\bigr).
$
The inverse Fourier transforms of the two smooth cutoffs have uniformly bounded $L^1$ norms under dilation; hence $\sup_k\|h_k\|_{L^1}\lesssim 1$. Since
$\widehat{v\psi_j}(\xi)=\mathcal{O}(|\xi|)$ for $j=1,\dots,m$, the representation
\eqref{eq:F-repre-of-Wm} can be evaluated at $\xi=0$ for each fixed
$k$. Indeed, $\widehat h_k$ vanishes in a neighborhood of zero,
so the denominator is uniformly separated from zero for sufficiently
small $|\xi|$, and dominated convergence applies before taking
$k\to\infty$. We obtain
\begin{align*}
\langle\mathcal{W}_{m}^{\mathrm{low}}h_k,1\rangle
&=-\sum_{j=0}^{m}\alpha_{0,j}^{(1)}\frac{\widehat{v\psi_0}(0)}{(2\pi)^4}
\lim_{\varepsilon\to 0^+}\int_{\mathbb{R}^4}
\frac{\overline{\widehat{v\psi_j}(\eta)}\,\widehat{h_k}(\eta)}{|\eta|^2+i\varepsilon}\,
f_{1}^{+}(|\eta|)\chi_0(|\eta|)\,d\eta \\
&=-\frac{\widehat{v\psi_0}(0)}{(2\pi)^4}
\lim_{\varepsilon\to 0^+}\int_{\mathbb{R}^4}
\frac{\overline{\widehat{v\psi_0}(\eta)}\,\widehat{h_k}(\eta)}{|\eta|^2+i\varepsilon}\,
f_{1}^{+}(|\eta|)\chi_0(|\eta|)\,d\eta
+\mathcal{O}\!\left(\int_{|\eta|\le\lambda_0}\frac{|f_{1}^{+}(|\eta|)|}{|\eta|}\,d\eta\right).
\end{align*}
Note that
$
f_{1}^{+}(|\eta|)=\frac{1}{|\eta|^2}\frac{1}{a\log|\eta|+z+\mathcal{O}(|\eta|^{0+})}
$
for a constant $a \neq 0$.
In four-dimensional polar coordinates the error is bounded by a constant times $\int_0^{\lambda_0}|\log r|^{-1}dr<\infty$.
Moreover, \eqref{eq:nonzero-vpsi0} implies that there exists a nonzero constant $c$ such that
\[
\frac{\widehat{v\psi_0}(0)}{(2\pi)^4}
\lim_{\varepsilon\to 0^+}\int_{\mathbb{R}^4}
\frac{\overline{\widehat{v\psi_0}(\eta)}\,\widehat{h_k}(\eta)}{|\eta|^2+i\varepsilon}\,
f_{1}^{+}(|\eta|)\chi_0(|\eta|)\,d\eta=c\log\log k+\mathcal{O}(1),\qquad k\gg1.
\]
It follows that
\[
\big|\langle\mathcal{W}_{m}^{\mathrm{low}}h_k,1\rangle\big|\longrightarrow\infty\qquad(k\to\infty).
\]
This contradicts the $L^1$-boundedness of $\mathcal{W}_{m}^{\mathrm{low}}$, thereby completing the proof.
\end{proof}

\section{The pure zero-eigenvalue case}\label{sec:eigenvalue}

\subsection{Inverse expansion and regular contribution}
We begin by separating the regular contribution from the finite-dimensional
eigenvalue block. The inverse expansion needed for this decomposition is
proved in Section~\ref{sec:exp of lemma}.

\begin{lemma}\label{lem:exp-of-M-inv}
Let $n\ge3$ and $|V(x)|\lesssim\langle x\rangle^{-\beta}$ with
$\beta>n+4$. Suppose that zero is an eigenvalue of $H$ and no
zero-energy resonance is present. Let $S_1$ be the orthogonal
projection onto $\ker T$. For all sufficiently small $\lambda>0$,
\[
(M^+(\lambda))^{-1}
=\lambda^{-2}S_1(D_2+G(\lambda))S_1+\mathcal N+\mathcal R(\lambda),
\]
where $\mathcal N$ is independent of $\lambda$ and absolutely
bounded, and $G(\lambda),\mathcal R(\lambda)=\widetilde{\mathcal O}(\lambda)$.
Moreover,
\[
S_1D_2S_1=w\mathcal P_e w,\qquad vS_1D_2S_1v=V\mathcal P_eV.
\]
If $\{\psi_j\}_{j=1}^d$ is an orthonormal basis of $\mathcal E$,
then $\{w\psi_j\}_{j=1}^d$ is a basis of $S_1L^2$.
\end{lemma}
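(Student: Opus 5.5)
The plan is to invert $M^+(\lambda)$ by the Jensen--Nenciu / Feshbach (Schur complement) scheme relative to the splitting $L^2=S_1L^2\oplus(I-S_1)L^2$, using the expansion \eqref{eq:exp of M}. First, since $T=U+vG_0v$ is a compact perturbation of the self-adjoint unitary $U$, it is Fredholm and self-adjoint. Hence $S_1$ is a finite-rank orthogonal projection and $T+S_1$ is invertible. I will then write
\[
M^+(\lambda)=T+S_1-S_1+E(\lambda),
\qquad
E(\lambda)=\lambda a_0^+P+\lambda^2vG_1v+r(\lambda)\ \ (n=3),
\]
with the analogous forms for $n=4$ and $n\ge5$.

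The first step is to identify $S_1$ and the leading term. Under the no-resonance hypothesis, every $\phi\in S_1L^2$ has the form $\phi=Uv\psi=w\psi$ with $\psi=-G_0v\phi\in\mathcal E$, and conversely. This gives the basis statement $\{w\psi_j\}$. In dimensions $3,4$ the absence of a resonance forces $S_1Pv=0$, so $S_1PS_1=0$. In dimensions $n\ge5$ there is no $P$-term at order below $\lambda^2$. Consequently the Schur complement on $S_1L^2$ is
\[
B(\lambda)=S_1-S_1\bigl(M^+(\lambda)+S_1\bigr)^{-1}S_1
=\lambda^2 S_1vG_1vS_1+\widetilde{\mathcal O}(\lambda^3).
\]
Here $(M^++S_1)^{-1}=(T+S_1)^{-1}-(T+S_1)^{-1}E(\lambda)(T+S_1)^{-1}+\cdots$ is expanded as a Neumann series, and I use $S_1(T+S_1)^{-1}=S_1$. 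For $n=3$ the order-$\lambda$ term $\lambda a_0^+S_1PS_1$ vanishes, and the cross term $\lambda^2 S_1P(\ldots)PS_1$ vanishes as well. I will then set $D_2=(S_1vG_1vS_1)^{-1}$. The key identity to verify is that for $\phi_j=w\psi_j$,
\[
\langle vG_1v\phi_i,\phi_j\rangle=-\langle\psi_i,\psi_j\rangle\quad\text{(up to the normalization of }G_1\text{)}.
\]
This follows from $G_1=$ the kernel of $(-\Delta)^{-2}$, so that $\langle G_1V\psi_i,V\psi_j\rangle=\langle G_0V\psi_i,G_0V\psi_j\rangle=\langle\psi_i,\psi_j\rangle$. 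For $n=3,4$ I will need the vanishing $\int V\psi=0$ to justify the pairing of the growing kernel. Thus $S_1vG_1vS_1$ is, up to sign, the Gram matrix of an orthonormal basis, which yields $S_1D_2S_1=w\mathcal P_ew$ and, after multiplying by $v$, $vS_1D_2S_1v=V\mathcal P_eV$.

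The second step is to invert $B(\lambda)=\lambda^2(D_2^{-1}+\widetilde{\mathcal O}(\lambda))$ as $\lambda^{-2}(D_2+G(\lambda))$, with $G=\widetilde{\mathcal O}(\lambda)$ obtained from a finite-dimensional Neumann series. The derivative bounds pass through because $D_2$ is a fixed matrix. I will then apply the Feshbach formula
\[
M^{-1}=(M+S_1)^{-1}+(M+S_1)^{-1}S_1B^{-1}S_1(M+S_1)^{-1},
\]
and expand $(M+S_1)^{-1}S_1=S_1-(T+S_1)^{-1}E(\lambda)S_1+\cdots$. The $\lambda^{-2}$ term times the order-$\lambda^2$ and order-$\lambda$ corrections gives $\lambda$-independent absolutely bounded pieces, which are collected into $\mathcal N$, together with $\widetilde{\mathcal O}(\lambda)$ pieces, which go into $\mathcal R$ or, when sandwiched by $S_1$, into $G$.

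The main obstacle I expect is the $n=3$ order-$\lambda$ cross terms. There $\lambda^{-2}\cdot\lambda\,(T+S_1)^{-1}PS_1$ would produce a $\lambda^{-1}$ singularity, and it must vanish. This requires $PS_1=0$, which I will deduce from $\int V\psi=0$, i.e.\ $\langle v,w\psi\rangle=0$. That vanishing is exactly the no-resonance condition cited in the Remark after Theorem~\ref{thm:main-infty}. In even dimensions $n=4$ the logarithmic $P$-term is handled the same way. Absolute boundedness of the remainders then follows from the kernel majorants established after \eqref{eq:exp of M}, using $\beta>n+4$.
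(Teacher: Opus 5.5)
Your Feshbach (Jensen--Nenciu) route is a valid alternative to the paper's argument. The paper instead conjugates $M^+(\lambda)$ by $\sigma_\lambda f=(Qf,\lambda^{-1}S_1f)$. The leading part then becomes $\mathrm{diag}(QTQ,\,S_1vG_1vS_1)$, and a single Neumann series inverts everything. It defines $G(\lambda)$ as the exact lower-right entry minus $D_2$, so $S_1\mathcal R(\lambda)S_1=0$ and $\mathcal N=D_0-D_0vG_1vD_2-D_2vG_1vD_0$ can be read off directly. Your two-step inversion uses the same cancellation input ($PS_1=S_1P=0$ for $n=3,4$), and the bookkeeping is sound. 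Since $S_1(M^+)^{-1}S_1=B(\lambda)^{-1}-S_1$ exactly, you get $\mathcal N=J-JvG_1vD_2-D_2vG_1vJ$ with $J=(T+S_1)^{-1}=D_0+S_1$. This is the paper's $\mathcal N$ minus $S_1$; the difference $\lambda^2S_1$ just migrates into $G$, which is harmless for the statement and its later use. Two small fixes. First, your displayed Gram identity carries the wrong sign, although your own computation gives the right one: $G_1$ is the kernel of $(-\Delta)^{-2}$ (regularized when $n=3,4$), so $\langle vG_1v\phi_i,\phi_j\rangle=+\delta_{ij}$. A minus sign would give $S_1D_2S_1=-w\mathcal P_ew$. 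Second, ``$S_1Pv=0$'' should read $S_1v=0$.

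The genuine gap is that you never show $J=(T+S_1)^{-1}$ is absolutely bounded. Your last sentence credits all absolute bounds to the majorants proved after the expansion of $M^+$, but those concern only $r(\lambda)$ (and, implicitly, $vG_0v$ and $vG_1v$). The lemma asserts that $\mathcal N$ is absolutely bounded, and $J$ appears in $\mathcal N$ undiluted. Moreover, every Neumann term $JEJ$, $JEJEJ,\dots$ in $(M^++S_1)^{-1}$ needs $|J|$ bounded on $L^2$ before the kernel majorants of $E$ and its derivatives can be composed into the $\widetilde{\mathcal O}$ bounds for $\mathcal R$ and $G$. Invertibility of $T+S_1$ on $L^2$ gives no information of this kind.

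This is exactly the one nontrivial analytic step in the paper's proof. For $m>n/4$ one shows $\|[(UvG_0v)^m](x,\cdot)\|_2\lesssim v(x)$, and deduces $|S_1(x,y)|\lesssim v(x)v(y)$ from $f=(-UvG_0v)^mf$. One then uses the exact identity $J=\sum_{j=0}^{2m-1}(-UK_0)^jU+(UK_0)^mJ(K_0U)^m$ with $K_0=vG_0v+S_1$, whose last term has kernel bounded by $C\|J\|\,v(x)v(y)$. To complete the proposal, add this argument or cite the standard absolute boundedness of $QD_0Q$.
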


We also localize on the perturbed spectral side. By intertwining,
\begin{equation}\label{eq-stationary formula-1}
\begin{aligned}
W_-^{\mathrm{low}}
&=\chi_0(\sqrt H/2)W_-^{\mathrm{low}}\\
&=\chi_0(\sqrt H/2)\chi_0(\sqrt{-\Delta})
-\chi_0(\sqrt H/2)\mathcal W^{\mathrm{low}}.
\end{aligned}
\end{equation}
Here $\chi_0(\sqrt H/2)$ denotes a smooth compactly
supported function of $H$ agreeing with the displayed expression
on its nonnegative spectrum and equal to $1$ near zero.
The cutoff kernel estimate in \cite[Lemma~2.4]{Yajima-1995-even}
gives $|C(x,u)|\lesssim_M\langle x-u\rangle^{-M}$ for every fixed
$M$. Thus this cutoff is admissible. Its role for $n\ge4$ is to
regularize the locally non-square-integrable free-resolvent kernel.
The first term on the right of \eqref{eq-stationary formula-1}
is admissible, so it remains to study the second term.

\begin{lemma}\label{lem:bound-of-re-part}
Suppose that $|V(x)|\lesssim\langle x\rangle^{-\beta}$ with
$\beta>n+4$. Let $\mathcal N$ and $\mathcal R(\lambda)$ be as in
Lemma~\ref{lem:exp-of-M-inv}. Then
\[
\mathcal W_R=\chi_0\big(\sqrt H/2\big)\circ
\frac1{\pi i}\int_0^\infty\lambda\chi_0(\lambda)
R_0^+(\lambda^2)v[\mathcal N+\mathcal R(\lambda)]v
(R_0^+(\lambda^2)-R_0^-(\lambda^2))\,d\lambda
\]
is bounded on $L^p(\mathbb R^n)$ for every $1\le p\le\infty$.
\end{lemma}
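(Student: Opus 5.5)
We will prove admissibility of the kernel of $\mathcal W_R$ (up to the cutoff), following the scheme of Lemma~\ref{lem:L1-bound-of-K}, now in general dimension $n\ge3$ and for both Schur conditions, so that boundedness holds on every $L^p$, $1\le p\le\infty$. Since $\mathcal N+\mathcal R(\lambda)$ is absolutely bounded with $\mathcal R(\lambda)=\widetilde{\mathcal O}(\lambda)$, we set
\[
N(z,w)=|\mathcal N(z,w)|+\sup_{0\le k\le k_0}\sup_{0<\lambda<\lambda_0}\lambda^{k-1}\bigl|\partial_\lambda^k\mathcal R(\lambda)(z,w)\bigr|,
\]
which is the kernel of a bounded operator on $L^2$. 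The kernel of the inner operator is then a sum of oscillatory integrals
\[
\iint v(z)v(w)\int_0^\infty e^{i\lambda(|x-z|\pm|w-y|)}\lambda\chi_0(\lambda)\,
R_0^+(\lambda^2)(x,z)e^{-i\lambda|x-z|}\,[\mathcal N+\mathcal R(\lambda)](z,w)\,
\frac{\lambda^{\frac{n-3}{2}}\omega_\pm(\lambda|w-y|)}{|w-y|^{\frac{n-1}{2}}}\,d\lambda\,dz\,dw,
\]
where we split $R_0^+$ by $\chi(\lambda|x-z|)+\tilde\chi(\lambda|x-z|)$ via \eqref{eq:kernel-est-2}--\eqref{eq:kernel-est-3}.

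In the $\tilde\chi$ piece the $\lambda$-amplitude is $\lambda^{n-2}$ times $\mathcal O_\infty(1)$ symbols, and its prefactor is $|x-z|^{-\frac{n-1}{2}}|w-y|^{-\frac{n-1}{2}}$. Lemma~\ref{lem-oscillatory estimate} with $b=n-2$ (which needs $k_0=[(n+3)/2]\ge [b]+2$ derivatives only after we absorb powers of $\lambda$ into the distance factors; this is where we trade $\lambda^{j}$ for $|x-z|^{-j}$ when $\lambda|x-z|\gtrsim1$) gives a bound $\langle\rho_\pm\rangle^{-(n-1)-}$ times $N(z,w)$. The $\chi$ piece has amplitude $\lambda^{\frac{n-1}{2}}$ and prefactor $|x-z|^{2-n}|w-y|^{-\frac{n-1}{2}}$, giving decay $\langle\rho_\pm\rangle^{-\frac{n+1}{2}+}$. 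We then integrate in $x$ (for $L^1$) and in $y$ (for $L^\infty$) using polar coordinates as in Lemma~\ref{lem:L1-bound-of-K}. For instance, in the $\chi$ piece the $x$-integral is $\int_0^\infty r\langle r\pm s\rangle^{-\frac{n+1}{2}+}dr\lesssim 1+s^{\frac{3-n}{2}+}$ for $s=|w-y|$. This is multiplied by $s^{-\frac{n-1}{2}}$, and the result is integrated against $v(z)N(z,w)v(w)$ using Cauchy--Schwarz and Lemma~\ref{lemma-GV}, with $\beta>n+4$ ensuring integrability.

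\textbf{Main obstacle.} For $n\ge4$ the $\tilde\chi$ piece gives $x$-integrals like $\int r^{n-1}r^{-\frac{n-1}{2}}\langle r-s\rangle^{-(n-1)}dr$, which diverge or produce large powers of $s$. Similarly, in the $y$-direction (the $L^\infty$ bound) the factor $|w-y|^{-\frac{n-1}{2}}$ with decay only in $\rho_\pm$ is borderline. This is where the outer cutoff $\chi_0(\sqrt H/2)$ is used. We will first commute it through: by intertwining and the resolvent identity, $\chi_0(\sqrt H/2)R_0^+(\lambda^2)v=\chi_0(\sqrt H/2)R_V^+(\lambda^2)vM^+(\lambda)\cdots$ is awkward, so instead we will write $\chi_0(\sqrt H/2)=\chi_0(\sqrt{-\Delta}/2)+\text{(admissible error)}$. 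The error is handled by the resolvent identity and the kernel bound $|C(x,u)|\lesssim\langle x-u\rangle^{-M}$. The main term's multiplier $\chi_0(|\xi|/2)/(|\xi|^2-\lambda^2-i0)$ has a kernel with no local singularity that behaves like $e^{i\lambda r}\lambda^{\frac{n-3}{2}}r^{-\frac{n-1}{2}}$ for $\lambda r\gtrsim1$. This gives the improved form in which both outer factors decay like $|\cdot|^{-\frac{n-1}{2}}$, so the amplitude is $\lambda^{n-2}$ with an integrable local part. Lemma~\ref{lem-oscillatory estimate} then gives $\langle\rho_\pm\rangle^{-(n-1)-}$, which is exactly enough, by a symmetric polar-coordinate computation, for both Schur integrals to be finite uniformly. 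We expect checking this cutoff regularization, in particular that $k_0$ derivatives suffice, to be the delicate step.
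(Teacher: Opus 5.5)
\textbf{There is a genuine gap: the $\lambda$-independent coefficient $\mathcal N$ cannot be handled by size estimates.} You bound $\mathcal N$ exactly as you bound $\mathcal R(\lambda)$, by taking absolute values after Lemma~\ref{lem-oscillatory estimate}. For $\mathcal N$ this cannot close, because its amplitude carries exactly the borderline power of $\lambda$, and the resulting decay is sharp for each phase separately.

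In $n=3$ the amplitude is $\lambda$. Two integrations by parts give $F(t)=\int_0^\infty e^{it\lambda}\lambda\chi_0(\lambda)\,d\lambda=-t^{-2}+\mathcal O(|t|^{-M})$ for $|t|\ge1$. So each of $F(r\pm s)/(rs)$ alone yields $s^{-1}\int^\infty r\langle r\pm s\rangle^{-2}\,dr=\infty$. In particular, your bound $\int_0^\infty r\langle r\pm s\rangle^{-\frac{n+1}{2}+}dr\lesssim 1+s^{\frac{3-n}{2}+}$ fails for $n=3$. (For $n\ge4$ that integral is $\sim 1+s$, which is harmless after multiplying by $s^{-\frac{n-1}{2}}$.)

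For $n\ge4$, the small-argument part of the left resolvent gives amplitude exactly $\lambda^{\frac{n-1}{2}}\mathcal N$. Hence the decay is $\langle r\pm s\rangle^{-\frac{n+1}{2}}$ and no better, and in the $L^\infty$ direction $\int^\infty s^{\frac{n-1}{2}}\langle r\pm s\rangle^{-\frac{n+1}{2}}\,ds$ diverges logarithmically.

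The paper overcomes this by cancellation between the two phases:
\begin{itemize}
\item In $n=3$ it subtracts the boundary terms before taking absolute values, using $\frac{1}{rs}\bigl((r-s)^{-2}-(r+s)^{-2}\bigr)=\frac{4}{(r-s)^2(r+s)^2}$.
\item For $n\ge4$ it expands $R_0^+(\lambda^2)(x,z)\chi(\lambda r)=\sum_j b_j\lambda^{2j}r^{2j+2-n}+r_n(\lambda,r)$. For the $\lambda$-independent kernels paired with $\mathcal N$, it evaluates the $\lambda$-integral exactly via $\frac{1}{\pi i}\int_0^\infty\lambda^{2j+1}\chi_0(\lambda)(R_0^+-R_0^-)(\lambda^2)\,d\lambda=(-\Delta)^j\chi_0(\sqrt{-\Delta})$, which has a Schwartz kernel.
\end{itemize}
Only $\mathcal R(\lambda)$ and $r_n$, which carry spare powers of $\lambda$, are passed through Lemma~\ref{lem-oscillatory estimate}.

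\textbf{Your cutoff step does not supply this ingredient.} Replacing $\chi_0(\sqrt H/2)$ by $\chi_0(\sqrt{-\Delta}/2)$ only removes the singularity of the left kernel as $|x-z|\to0$. On $1\lesssim|x-z|\lesssim\lambda^{-1}$, the kernel of $\chi_0(\sqrt{-\Delta}/2)R_0^+(\lambda^2)$ is still comparable to $|x-z|^{2-n}$, which dominates $\lambda^{\frac{n-3}{2}}|x-z|^{-\frac{n-1}{2}}$ there. So you do not get amplitude $\lambda^{n-2}$ with prefactor $|x-z|^{-\frac{n-1}{2}}$ uniformly. For $n=3$, the power $\lambda^{n-2}=\lambda$ is itself borderline anyway. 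Also, taking $b=n-2$ in Lemma~\ref{lem-oscillatory estimate} needs $n>k_0$ derivatives when $n\ge4$.

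\textbf{The obstacle you flag in the $\tilde\chi$ piece is not real.} Since $n-2\ge\frac{n-1}{2}+\varepsilon$ for $n\ge4$, you may take $b=\frac{n-1}{2}+\varepsilon$, which uses exactly $k_0$ derivatives. Then $\int_0^\infty t^{\frac{n-1}{2}}\langle t\pm R\rangle^{-\frac{n+1}{2}-\varepsilon}\,dt\lesssim1+R^{\frac{n-1}{2}}$, and this growth is absorbed by the prefactor in both Schur integrals.

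\textbf{How the paper actually uses the cutoff.} It uses the left cutoff directly, with no comparison to a free cutoff. The bound $|C(x,u)|\lesssim\langle x-u\rangle^{-M}$ gives $\sup_{x,z}\int|C(x,u)|\,|u-z|^{-d}\,du<\infty$. Hence $C_j(x,\cdot)v\in L^2$ uniformly in $x$, even though $|x-\cdot|^{2-n}v$ is not locally square integrable when $n\ge4$. This is what makes Cauchy--Schwarz against the absolutely bounded $\mathcal N$ and $\mathcal R(\lambda)$ possible.
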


\begin{proof}[Proof in dimension three]
It suffices to prove that the integral operator, without the left
spectral cutoff, has an admissible kernel. Write
$r=|x-z|$ and $s=|w-y|$.
Using the explicit kernels
\[
R_0^\pm(\lambda^2)(x,y)=\frac{e^{\pm i\lambda|x-y|}}{4\pi|x-y|},
\]
the integral kernel, up to a fixed constant, is
\[
\iint \frac{v(z)v(w)}{rs}\int_0^\infty
\lambda\chi_0(\lambda)
\big(e^{i\lambda(r+s)}-e^{i\lambda(r-s)}\big)
[\mathcal N(z,w)+\mathcal R(\lambda)(z,w)]\,d\lambda\,dz\,dw.
\]
Set
\[
\mathcal A_0(z,w)=|\mathcal N(z,w)|,\qquad
\mathcal A_1(z,w)=\sup_{0\le j\le3}\sup_{0<\lambda<\lambda_0}
\lambda^{j-1}|\partial_\lambda^j\mathcal R(\lambda)(z,w)|.
\]
Both kernels give bounded operators on $L^2$. We shall repeatedly use
\begin{equation}\label{eq:repart-n3-weight}
\sup_{y\in\mathbb R^3}
\big\|v(w)(1+|w-y|^{-1})\big\|_{L^2_w}<\infty.
\end{equation}
Indeed, on $|w-y|<1$ use $V\in L^\infty$ and
$\int_{|u|<1}|u|^{-2}du<\infty$, and on its complement use
$V\in L^1$.

First consider $\mathcal R(\lambda)$. Since
$\lambda\mathcal R(\lambda)=\widetilde{\mathcal O}(\lambda^2)$,
we may apply Lemma~\ref{lem-oscillatory estimate} with $b=3/2$;
this uses only the three available derivatives and gives
\[
\left|\int_0^\infty e^{it\lambda}\lambda\chi_0(\lambda)
\mathcal R(\lambda)(z,w)\,d\lambda\right|
\lesssim\mathcal A_1(z,w)\langle t\rangle^{-5/2}.
\]
Thus its kernel is majorized by
\[
C\iint v(z)\mathcal A_1(z,w)v(w)
\frac{\langle r+s\rangle^{-5/2}+\langle r-s\rangle^{-5/2}}{rs}
\,dz\,dw.
\]
Polar integration gives
\[
\frac1s\int_0^\infty
r\big(\langle r+s\rangle^{-5/2}+\langle r-s\rangle^{-5/2}\big)\,dr
\lesssim1+s^{-1};
\]
the same estimate holds with $r$ and $s$ interchanged. Consequently
the integral of the absolute kernel in $x$ is bounded by
\[
C\iint v(z)\mathcal A_1(z,w)v(w)(1+|w-y|^{-1})\,dz\,dw
\le C\|v\|_2\|\mathcal A_1\|_{\mathbb B(L^2)}
\|v(\cdot)(1+|\cdot-y|^{-1})\|_2\lesssim1.
\]
The integral in $y$ is bounded in the same way, placing the
factor $1+|x-z|^{-1}$ on the left of $\mathcal A_1$. Hence the
$\mathcal R$ contribution is admissible.

Next consider the constant coefficient $\mathcal N$. Put
\[
F(t)=\int_0^\infty e^{it\lambda}\lambda\chi_0(\lambda)\,d\lambda,
\qquad k(r,s)=\frac{F(r+s)-F(r-s)}{rs}.
\]
We prove
\begin{equation}\label{eq:repart-n3-main-Schur}
\int_0^\infty |k(r,s)|r^2\,dr\lesssim1+s^{-1},\qquad
\int_0^\infty |k(r,s)|s^2\,ds\lesssim1+r^{-1}.
\end{equation}
On $r+s\le1$, boundedness of $F$ gives $|k(r,s)|\lesssim(rs)^{-1}$,
which immediately has the two bounds in
\eqref{eq:repart-n3-main-Schur} on this region.

On $r+s>1$, first treat the region $|r-s|\le(r+s)/2$.
Here $r\sim s$ and both are bounded below by $1/4$.
The oscillatory integral lemma with $b=1$ gives
\[
|k(r,s)|\lesssim\frac{
\langle r+s\rangle^{-2}+\langle r-s\rangle^{-2}}{rs}.
\]
Since $r/s$ and $s/r$ are bounded on this region, both radial
integrals in \eqref{eq:repart-n3-main-Schur} are uniformly bounded.
This also handles the possible vanishing of $r-s$ without dividing
by that phase.

It remains to consider $r+s>1$ and $|r-s|>(r+s)/2$.
For $t\ne0$, two integrations by parts give the exact identity
\[
F(t)=-\frac1{t^2}+H(t),\qquad
H(t)=-\frac1{t^2}\int_0^\infty e^{it\lambda}
\big(2\chi_0'(\lambda)+\lambda\chi_0''(\lambda)\big)\,d\lambda.
\]
The boundary term is $-t^{-2}$ because
$(\lambda\chi_0(\lambda))'|_{\lambda=0}=1$.
All derivatives in the remaining integral fall on $\chi_0$;
its amplitude is smooth and compactly supported away from zero.
Therefore, for every fixed $M$,
\[
|H(t)|\lesssim_M\langle t\rangle^{-M},\qquad |t|\ge1/2.
\]
The two boundary terms must be subtracted before taking absolute
values. They give
\[
\frac1{rs}\left(\frac1{(r-s)^2}-\frac1{(r+s)^2}\right)
=\frac4{(r-s)^2(r+s)^2}\lesssim(r+s)^{-4}.
\]
For the cutoff terms, since $|r-s|\sim r+s$ in this region,
\[
\frac{|H(r+s)|+|H(r-s)|}{rs}
\lesssim_M\frac{\langle r+s\rangle^{-M}}{rs}.
\]
Taking $M>2$, both contributions satisfy
\eqref{eq:repart-n3-main-Schur}: the boundary contribution has
uniformly bounded radial integrals on $r+s>1$, whereas the cutoff
contribution has radial integrals bounded by $C/s$ and $C/r$,
respectively. This proves \eqref{eq:repart-n3-main-Schur}.

The kernel of the $\mathcal N$ contribution is
$\iint v(z)\mathcal N(z,w)v(w)k(|x-z|,|w-y|)\,dz\,dw$,
up to a constant. Applying \eqref{eq:repart-n3-main-Schur} and
\eqref{eq:repart-n3-weight}, with $\mathcal A_0$ in place of
$\mathcal A_1$, proves that its $L^1$-norm in either variable is uniformly bounded
with respect to the other variable, just as for the remainder term.
Thus the entire integral kernel is admissible. Schur's test gives
boundedness on $L^p$ for every $1\le p\le\infty$; composing with
the admissible spectral cutoff preserves these bounds.
\end{proof}

\begin{proof}[Proof in dimensions $n\ge4$]
The $L^1$-bound follows by the same argument as in
Lemma~\ref{lem:L1-bound-of-K}, using the free-resolvent
estimates in dimension $n$. We therefore concentrate on
the $L^\infty$-bound.

Put
\[
r=|x-z|,\qquad s=|w-y|,\qquad
k_0=\left[\frac{n+3}{2}\right].
\]
Recall that the nonnegative kernels
\[
\mathcal A_0(z,w)=|\mathcal N(z,w)|,\qquad
\mathcal A_1(z,w)
=
\sup_{0\le k\le k_0}\sup_{0<\lambda<\lambda_0}
\lambda^{k-1}
\bigl|\partial_\lambda^k\mathcal R(\lambda)(z,w)\bigr|
\]
define bounded operators on $L^2$. Consequently,
\begin{equation}\label{eq:ge4-coefficient-bound}
\iint_{\mathbb R^{2n}}
|v(z)|\mathcal A_i(z,w)|v(w)|\,dz\,dw
\le \|\mathcal A_i\|_{2\to2}\|v\|_2^2,
\qquad i=0,1.
\end{equation}

We split the left free resolvent into its large- and
small-argument parts:
\[
R_0^+(\lambda^2)(x,z)
=
R_0^+(\lambda^2)(x,z)\widetilde\chi(\lambda r)
+
R_0^+(\lambda^2)(x,z)\chi(\lambda r).
\]

\medskip
\noindent\emph{The large-argument part.}
By the free-resolvent formulas in Subsection \ref{sec:free-r}, its kernel is a linear
combination of
\[
\begin{aligned}
\iint_{\mathbb R^{2n}}
\frac{v(z)v(w)}{r^{\frac{n-1}{2}}s^{\frac{n-1}{2}}}
\int_0^\infty
e^{i\lambda(r\pm s)}
\lambda^{n-2}\omega_1(\lambda r)\omega_\pm(\lambda s)
\bigl(\mathcal N+\mathcal R(\lambda)\bigr)(z,w)
\chi_0(\lambda)\,d\lambda\,dz\,dw,
\end{aligned}
\]
where $\omega_1$ includes the large-argument cutoff.
Fix $0<\varepsilon<1/2$. Since
$n-2\ge\frac{n-1}{2}+\varepsilon$, from \eqref{eq:est-for-omega}, the symbol estimates give
\[
\begin{aligned}
\left|\partial_\lambda^k\left[
\lambda^{n-2}\omega_1(\lambda r)\omega_\pm(\lambda s)
\bigl(\mathcal N+\mathcal R(\lambda)\bigr)(z,w)
\right]\right|
\lesssim
\bigl(\mathcal A_0(z,w)+\mathcal A_1(z,w)\bigr)
\lambda^{\frac{n-1}{2}+\varepsilon-k}
\end{aligned}
\]
for $ 0<\lambda<\lambda_0$ and $
0\le k\le k_0.$
Applying Lemma~\ref{lem-oscillatory estimate} with
$b=\frac{n-1}{2}+\varepsilon$, we obtain the kernel majorant
\[
C\iint_{\mathbb R^{2n}}
|v(z)v(w)|
\bigl(\mathcal A_0(z,w)+\mathcal A_1(z,w)\bigr)
\left(\frac{1}{r^{\frac{n-1}{2}}\langle r+s\rangle^{\frac{n+1}{2}+\varepsilon}s^{\frac{n-1}{2}}
}+\frac{1}{r^{\frac{n-1}{2}}\langle r-s\rangle^{\frac{n+1}{2}+\varepsilon}s^{\frac{n-1}{2}}
}\right)\,dz\,dw.
\]
Using polar coordinates and the elementary estimate
$
\int_0^\infty
\frac{t^{\frac{n-1}{2}}}
{\langle t\pm R\rangle^{\frac{n+1}{2}+\varepsilon}}
\,dt
\lesssim 1+R^{\frac{n-1}{2}}
$
for $R>0$,
we bound the integral of the absolute kernel in $y$ by
\[
C\iint_{\mathbb R^{2n}}
|v(z)v(w)|
\bigl(\mathcal A_0(z,w)+\mathcal A_1(z,w)\bigr)
\bigl(1+|x-z|^{-\frac{n-1}{2}}\bigr)\,dz\,dw.
\]
This is uniformly bounded in $x$ by Cauchy--Schwarz, since
\[
\sup_x
\bigl\|v(\cdot)
\bigl(1+|x-\cdot|^{-\frac{n-1}{2}}\bigr)\bigr\|_2
<\infty.
\]
Indeed, $|x-z|^{-(n-1)}$ is locally integrable and
$V\in L^1\cap L^\infty$. Interchanging $r$ and $s$
gives the corresponding bound for the integral in $x$.
Thus the large-argument contribution has an admissible
kernel.

\medskip
\noindent\emph{The small-argument expansion.}
By \eqref{eq:kernel-est-1},
\begin{equation}\label{eq:ge4-small-expansion}
\begin{aligned}
R_0^+(\lambda^2)(x,z)\chi(\lambda r)
&=
\sum_{j=0}^{\left[\frac{n-3}{2}\right]}
b_j\lambda^{2j}r^{2j+2-n}
+r_n(\lambda,r).
\end{aligned}
\end{equation}
Here the cutoff has been removed from the finite sum and its
contribution absorbed into $r_n$. More precisely, $r_n$ consists
of the original small-argument remainder and
$-\sum_j b_j\lambda^{2j}r^{2j+2-n}\widetilde\chi(\lambda r)$.
For the latter terms, $\lambda r\gtrsim1$ and
$n-2-2j\ge1$, so the same bound holds. Thus, for fixed
$0<\varepsilon<1/2$,
\begin{equation}\label{eq:ge4-small-remainder}
\bigl|\partial_\lambda^k r_n(\lambda,r)\bigr|
\lesssim_{k,\varepsilon}
\lambda^{n-2-\varepsilon-k}r^{-\varepsilon},
\qquad 0\le k\le k_0.
\end{equation}

Before estimating these terms, we record the effect of
the left spectral cutoff. Denote the kernel of
$\chi_0(\sqrt H/2)$ by $C(x,u)$. The estimate
stated before the lemma gives
\[
|C(x,u)|\lesssim_M\langle x-u\rangle^{-M}.
\]
Consequently, for every $0<d<n$,
\begin{equation}\label{eq:ge4-smoothing}
\sup_{x,z}\int_{\mathbb R^n}
|C(x,u)|\,|u-z|^{-d}\,du<\infty.
\end{equation}
This follows from local integrability near $u=z$ and
the uniform integrability of $C(x,\cdot)$.

We will also use
\begin{equation}\label{eq:ge4-right-integrability}
\int_{\mathbb R^n}
|w-y|^{-\frac{n-1}{2}}
\langle w-y\rangle^{-\frac{n+1}{2}-\varepsilon}\,dy
<\infty,
\end{equation}
uniformly in $w$. In polar coordinates, the integrand
with the radial measure behaves like
$s^{\frac{n-1}{2}}$ near zero and like
$s^{-1-\varepsilon}$ at infinity.

\medskip
\noindent\emph{The finite sum paired with $\mathcal N$.}
Let $B_j$ be the operator with kernel
\[
B_j(u,z)=b_j|u-z|^{-d_j},
\qquad d_j=n-2-2j\in(0,n).
\]
Moreover, writing
\[
C_j(x,z)=\int_{\mathbb R^n}C(x,u)B_j(u,z)\,du,
\]
we obtain from \eqref{eq:ge4-smoothing} that
\[
\sup_x\|C_j(x,\cdot)v\|_2\lesssim\|v\|_2.
\]
The finite sum now has coefficients independent of $\lambda$.
The free spectral-measure identity gives
\begin{equation}\label{eq:ge4-polynomial-multiplier}
\begin{aligned}
\frac1{\pi i}\int_0^\infty
\lambda^{2j+1}\chi_0(\lambda)
C_jv\mathcal Nv
(R_0^+-R_0^-)(\lambda^2)\,d\lambda
=C_jv\mathcal Nv(-\Delta)^j
\chi_0(\sqrt{-\Delta}).
\end{aligned}
\end{equation}
The Fourier multiplier on the right has a Schwartz
kernel. Cauchy--Schwarz and the $L^2$-boundedness of $\mathcal N$
therefore show that \eqref{eq:ge4-polynomial-multiplier}
is bounded on $L^\infty$. The left spectral cutoff is already
included in $C_j$.

\medskip
\noindent\emph{The finite sum paired with
$\mathcal R(\lambda)$.}
Using \eqref{eq:kernel-est-4}, we write the kernel as a sum of
\[
\frac1{\pi i}\iint C_j(x,z)v(z)v(w)s^{-\frac{n-1}{2}}
\int_0^\infty e^{\pm i\lambda s}
\lambda^{2j+\frac{n-1}{2}}\chi_0(\lambda)
\omega_\pm(\lambda s)\mathcal R(\lambda)(z,w)
\,d\lambda\,dz\,dw.
\]

The scalar
oscillatory amplitudes are
\[
\lambda^{2j+\frac{n-1}{2}}
\omega_\pm(\lambda s)
\mathcal R(\lambda)(z,w).
\]
Since
$\mathcal R(\lambda)=\widetilde{\mathcal O}(\lambda)$,
\[
\begin{aligned}
\left|\partial_\lambda^k\left[
\lambda^{2j+\frac{n-1}{2}}
\omega_\pm(\lambda s)
\mathcal R(\lambda)(z,w)
\right]\right|
    &\lesssim
\mathcal A_1(z,w)
\lambda^{2j+\frac{n+1}{2}-k}
\\
&\lesssim
\mathcal A_1(z,w)
\lambda^{\frac{n-1}{2}+\varepsilon-k},
\qquad 0\le k\le k_0.
\end{aligned}
\]
Lemma~\ref{lem-oscillatory estimate} therefore yields
the kernel majorant, up to a constant,
\[
\iint_{\mathbb R^{2n}}
|C_j(x,z)v(z)|
\mathcal A_1(z,w)|v(w)|
s^{-\frac{n-1}{2}}
\langle s\rangle^{-\frac{n+1}{2}-\varepsilon}
\,dz\,dw.
\]
Equations \eqref{eq:ge4-right-integrability} and
\eqref{eq:ge4-coefficient-bound} show that the $L^1$-norm of this kernel in $y$ is uniformly
bounded in $x$. This proves the $L^\infty$ bound.

\medskip
\noindent\emph{The remainder.}
We apply the same oscillatory estimate to $r_n$.
Including the factor $\lambda$ and the spectral
difference, the scalar amplitudes associated with
$r_n(\lambda,r)$ are
\[
\lambda^{\frac{n-1}{2}}
r_n(\lambda,r)\omega_\pm(\lambda s)\mathcal N(z,w)
\quad\text{and}\quad
\lambda^{\frac{n-1}{2}}
r_n(\lambda,r)\omega_\pm(\lambda s)
\mathcal R(\lambda)(z,w).
\]
By \eqref{eq:ge4-small-remainder}, their $k$-th
derivatives are bounded, respectively, by
\[
\begin{aligned}
&\mathcal A_0(z,w)r^{-\varepsilon}
\lambda^{\frac{n-1}{2}+n-2-\varepsilon-k},
\\
&\mathcal A_1(z,w)r^{-\varepsilon}
\lambda^{\frac{n-1}{2}+n-1-\varepsilon-k}.
\end{aligned}
\]
Since $n\ge4$ and $0<\varepsilon<1/2$, both satisfy
the bound
\[
C\bigl(\mathcal A_0(z,w)+\mathcal A_1(z,w)\bigr)
r^{-\varepsilon}
\lambda^{\frac{n-1}{2}+\varepsilon-k}.
\]
Another application of the oscillatory estimate gives, up to a constant,
\[
\iint_{\mathbb R^{2n}}
\left(\int_{\mathbb R^n}|C(x,u)|\,|u-z|^{-\varepsilon}\,du\right)|v(z)|
\bigl(\mathcal A_0(z,w)+\mathcal A_1(z,w)\bigr)|v(w)|
s^{-\frac{n-1}{2}}
\langle s\rangle^{-\frac{n+1}{2}-\varepsilon}
\,dz\,dw
\]
as a kernel majorant. The integral in parentheses is uniformly bounded in $x,z$
by \eqref{eq:ge4-smoothing}.
Integrating in $y$ and applying
\eqref{eq:ge4-right-integrability} and
\eqref{eq:ge4-coefficient-bound} proves the
$L^\infty$-bound for the remainder.

All applications of the oscillatory lemma use
$b=\frac{n-1}{2}+\varepsilon$, for which
$[b]+2=k_0$. Thus the available spectral derivatives
suffice. We have proved the $L^1$ and $L^\infty$ bounds
for $\mathcal W_R$, and interpolation completes the
proof for every $1\le p\le\infty$.
\end{proof}

\subsection{The eigenvalue contribution}
\label{sec:eigen-contribution}

In dimensions $n\ge3$, the contribution of the zero-energy eigenspace to the
low-energy part of the wave operator is
\[
\chi_0\big(\sqrt H/2\big)\circ\mathcal{W}_e+\chi_0\big(\sqrt H/2\big)\circ\mathcal{W}_{e,r},
\]
with
\begin{equation}\label{eq:def-Wesing-hd}
\begin{aligned}
\mathcal{W}_e&=\frac{1}{\pi i}\int_0^{+\infty} \lambda^{-1} \chi_0(\lambda)\,
R_0^+(\lambda^{2})V\mathcal{P}_eV
\bigl(R_0^+(\lambda^{2})-R_0^-(\lambda^{2})\bigr) \, d\lambda , \\
\mathcal{W}_{e,r}&=\frac{1}{\pi i}\int_0^{+\infty} \lambda^{-1} \chi_0(\lambda)\,
R_0^+(\lambda^{2})vS_1G(\lambda)S_1v
\bigl(R_0^+(\lambda^{2})-R_0^-(\lambda^{2})\bigr) \, d\lambda.
\end{aligned}
\end{equation}
Here $\mathcal W_e$ is the leading pole contribution, while
$\mathcal W_{e,r}$ retains the full correction within $S_1L^2$.
The moment spaces $\mathcal E_0\supset\mathcal E_1\supset\mathcal E_2$
were defined in the introduction. The proposition below concerns
$\mathcal W_e$ itself; the passage to the complete eigenvalue
contribution is discussed at the end of the section.

\begin{proposition}\label{lem:bound of we}
Let $n\ge3$ and $|V(x)|\lesssim\langle x\rangle^{-\beta}$ with
$\beta>n+4$.
Suppose that zero is an eigenvalue of $H=-\Delta+V$ and that no zero-energy
resonance is present. Then
\begin{enumerate}
\item[\emph{(i)}] $\mathcal{W}_e$ is bounded on
$L^p(\mathbb{R}^n)$ for $1\le p<\frac{n}{2}$ for $n\ge 5$ and for $1\le p\le 2$ for $n=3,4$; if
$\mathcal{E}\setminus\mathcal{E}_0\neq\emptyset$
(which is possible only when $n\ge5$), then it is unbounded on
$L^p(\mathbb{R}^n)$ for every
$\frac{n}{2}\le p\le\infty$;
\item[\emph{(ii)}] assume $\mathcal{E}=\mathcal{E}_0$; then
$\mathcal{W}_e$ is bounded on
$L^p(\mathbb{R}^n)$ for $1\le p<n$; if moreover
$\mathcal{E}_0\setminus\mathcal{E}_1\neq\emptyset$, it is unbounded on
$L^p(\mathbb{R}^n)$ for every $n\le p\le\infty$;
\item[\emph{(iii)}] assume $\mathcal{E}=\mathcal{E}_1$; then
$\mathcal{W}_e$ is bounded on
$L^p(\mathbb{R}^n)$ for $1\le p<\infty$; if moreover
$\mathcal{E}_1\setminus\mathcal{E}_2\neq\emptyset$, it is unbounded on
$L^\infty(\mathbb{R}^n)$;
\item[\emph{(iv)}] assume $\mathcal{E}=\mathcal{E}_2$; then
$\mathcal{W}_e$ is bounded on
$L^\infty(\mathbb{R}^n)$.
\end{enumerate}
\end{proposition}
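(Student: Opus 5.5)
The plan is to make the leading pole explicit using the eigenvalue equation. Fix an orthonormal basis $\{\psi_j\}_{j=1}^d$ of $\mathcal E$, so that $V\mathcal P_eV=\sum_j V\psi_j\langle\cdot,V\psi_j\rangle$. Since $H\psi_j=0$, we have $V\psi_j=\Delta\psi_j$, and therefore
\[
R_0^+(\lambda^2)V\psi_j=-\psi_j-\lambda^2R_0^+(\lambda^2)\psi_j .
\]
By \eqref{eq:Four_trans_on_S}, $\frac{\lambda}{\pi i}(R_0^+-R_0^-)(\lambda^2)$ is the free spectral measure $dE_0(\lambda^2)$ on the sphere $|\xi|=\lambda$. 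Substituting into \eqref{eq:def-Wesing-hd}, the $\lambda^{-2}$ in front of $-\psi_j$ survives, while $\lambda^{-2}$ cancels against $\lambda^2R_0^+\psi_j$. On the sphere we also have $\widehat{V\psi_j}(\xi)=-|\xi|^2\widehat{\psi_j}(\xi)=-\lambda^2\widehat{\psi_j}(\xi)$, which absorbs the surviving $\lambda^{-2}$. Integrating $\int_0^\infty dE_0(\lambda^2)\chi_0(\lambda)$ then gives
\[
\mathcal W_e=\mathcal P_e\chi_0(\sqrt{-\Delta})+\mathcal W_e^{\flat},\qquad
\mathcal W_e^{\flat}=\frac1{\pi i}\sum_j\int_0^\infty\lambda\chi_0(\lambda)R_0^+(\lambda^2)\psi_j\,\bigl\langle (R_0^+-R_0^-)(\lambda^2)\,\cdot\,,\psi_j\bigr\rangle\,d\lambda ,
\]
up to sign conventions.

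The next step is to show that $\mathcal W_e^{\flat}$ is bounded on $L^p$ throughout the upper range relevant to each part. It has no $\lambda^{-1}$ singularity, and its structure is that of the regular term in Lemma~\ref{lem:bound-of-re-part}, with $v\mathcal Nv$ replaced by the finite-rank kernel $\sum_j\psi_j(z)\overline{\psi_j(w)}$. I will repeat that argument: split the left resolvent into large- and small-argument parts, and apply Lemma~\ref{lem-oscillatory estimate} with the kernel bounds \eqref{eq:kernel-est-2}--\eqref{eq:kernel-est-4}. Here, however, $\psi_j$ decays only like $|x|^{2-n}$ in general, rather than like $v$. I therefore expect to need the identity $\psi_j=-G_0V\psi_j$ and to move one free resolvent onto $V\psi_j$, using $R_0^+G_0=\lambda^{-2}(R_0^+-G_0)$. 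This trades decay of $\psi_j$ for the well-localized $V\psi_j$ at the cost of factors of $\lambda$ that the cancellation in $\widehat{V\psi_j}$ pays for. This bound on $\mathcal W_e^\flat$ is the step I expect to be the main obstacle. At minimum I need it for $p\ge n/2$ when $\mathcal E\neq\mathcal E_0$, for $p\ge n$ in case (ii), and for $p=\infty$ in cases (iii)--(iv). If the full range fails, the $L^\infty$ estimate suffices, combined with $L^2$ boundedness of each piece and interpolation.

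For the finite-rank operator $\mathcal P_e\chi_0(\sqrt{-\Delta})f=\sum_j\psi_j\langle f,\chi_0(\sqrt{-\Delta})\psi_j\rangle$, linear independence of the $\psi_j$ shows it is bounded on $L^p$ if and only if $\psi_j\in L^p$ and $\chi_0(\sqrt{-\Delta})\psi_j\in L^{p'}$ for all $j$; the same holds for every element of $\mathcal E$. I will expand $\psi=-G_0V\psi$ by Taylor expanding $|x-y|^{2-n}$ in $y$, which is allowed since $\beta>n+4$ gives three moments of $V\psi$:
\[
\psi(x)=c_0\Bigl(\int V\psi\Bigr)|x|^{2-n}+c_1\sum_k\Bigl(\int y_kV\psi\Bigr)x_k|x|^{-n}+c_2\frac{P_\psi(x)}{|x|^{n+2}}+\mathcal O(|x|^{-n-1}),
\]
where $P_\psi$ is the harmonic projection of the quadratic moment form. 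The trace part drops out because $\Delta|x|^{2-n}=0$ away from the origin, which is why only $\mathcal A_2$-moments appear. The multiplier $\chi_0(\sqrt{-\Delta})$ preserves these tails modulo faster-decaying errors. The tail exponents $n-2$, $n-1$ and $n$ lie in $L^{p'}$ exactly when $p<n/2$, $p<n$ and $p<\infty$ respectively, and $|x|^{-n}\notin L^1$. The harmonic quadratic tail is non-integrable precisely when $P_\psi\ne0$. This gives all the unboundedness statements, after subtracting the bounded $\mathcal W_e^\flat$, and the positive upper-range statements.

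Finally, for the lower ranges, including $p=1$ where $\psi\notin L^1$ is possible, I will not use this splitting. Instead I will prove $L^1$ boundedness of $\mathcal W_e$ directly. The right factor $\widehat{V\psi_j}(\xi)=-|\xi|^2\widehat{\psi_j}$ supplies $\lambda^2$, which cancels $\lambda^{-1}$ and leaves amplitude $\lambda\cdot\lambda^{(n-3)/2}$. An argument identical to Lemma~\ref{lem:L1-bound-of-K} then gives a kernel integrable in $x$ uniformly in $y$. I will then interpolate with the upper-endpoint bounds obtained above, for instance $L^1$ together with $L^p$ for all $p<n/2$, or $L^p$ for all $p<\infty$, or $L^\infty$, to fill the full stated ranges. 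In dimensions $n=3,4$, $\mathcal E=\mathcal E_0$ automatically, so part (i) follows from part (ii).
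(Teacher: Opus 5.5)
Your decomposition $\mathcal W_e=\mathcal P_e\chi_0(\sqrt{-\Delta})+\mathcal W_e^\flat$ is the paper's Step~1, and your moment asymptotics and finite-rank criterion match its Steps~3--4. One slip: with the factor $\lambda$, the right-hand profile in $\mathcal W_e^\flat$ must be $V\psi_j$, not $\psi_j$, because trading $V\psi_j$ for $\psi_j$ on the sphere costs $\lambda^2$.

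The genuine gap is the step you flag yourself: boundedness of $\mathcal W_e^\flat$ on $L^p$ for $2\le p\le\infty$. Every unboundedness claim and every upper boundedness range depends on it. Repeating the argument of Lemma~\ref{lem:bound-of-re-part} with $\psi_j$ as the left profile does not close. After the oscillatory estimate, the $y$-integral of the large-argument majorant is only $\lesssim 1+|x-z|^{-(n-1)/2}$, with no decay in $|x-z|$. You are then left needing $\int|\psi_j(z)|\,dz<\infty$, which holds only for $\psi_j\in\mathcal E_2$.

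Your proposed remedy is circular. The identity $R_0^+\psi_j=-R_0^+G_0V\psi_j=-\lambda^{-2}(R_0^+-G_0)V\psi_j=-\lambda^{-2}(R_0^+V\psi_j+\psi_j)$ just reassembles the $\lambda^{-2}$-singular term and the finite-rank term you had separated. If you instead keep $G_0$ on the far left, with the well-localized kernel $V\mathcal P_eV$ in the middle, its low-frequency part has kernel $\sim|x|^{2-n}$, which Schur's test cannot absorb.

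The missing idea is duality. The paper splits $(-\Delta)^{-1}=\chi_0(\sqrt{-\Delta})(-\Delta)^{-1}+\widetilde\chi_0(\sqrt{-\Delta})(-\Delta)^{-1}$. The high-frequency piece has an integrable kernel and is handled as in Lemma~\ref{lem:bound-of-re-part}. For the low-frequency piece, comparing Fourier representations shows that its adjoint equals $-\chi_0(\sqrt{-\Delta})\mathcal W_e$, since the denominators $|\xi|^2-|\eta|^2-i0$ and $|\eta|^2-|\xi|^2+i0$ differ only by sign. The $L^p$ bounds of $\mathcal W_e$ for $1\le p\le 2$ therefore transfer to $2\le p\le\infty$.

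Those lower-range bounds are themselves an input, which the paper takes from Goldberg--Green and Yajima. Your substitute, a direct $L^1$ bound ``identical to'' Lemma~\ref{lem:L1-bound-of-K}, has the same defect. Once $\lambda^2$ is moved onto the right profile, the middle kernel is $V\psi_j(z)\overline{\psi_j(w)}$. The Schur step then requires $\int|\psi_j(w)|\,(1+|w-y|^{-(n-1)/2})\,dw<\infty$, which again fails when $\psi_j\notin L^1$. As written, neither the $L^1$ nor the $L^\infty$ estimate on which your interpolation and necessity arguments rest is established.
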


\begin{proof}
The estimates for each eigenfunction contribution in
\cite[Sections 2--3]{Goldberg-Green-2016},
\cite[Proposition 2.1]{Goldberg-Green-2017}, and
\cite[Lemma 4.3]{Yajima-2016-3d} give the $L^p$-boundedness of
$\mathcal W_e$ for $1\le p\le2$ in dimensions $n\ge5$, $n=4$,
and $n=3$, respectively. These are estimates for the low-energy
singular integrals and apply under the stated decay assumption.
In dimensions three and four, the required zeroth-moment
cancellation holds automatically because $\mathcal E=\mathcal E_0$.

We prove the remaining assertions in four steps. First, we isolate
a finite-rank term. We then show that its remainder is bounded
for $2\le p\le\infty$, compute the asymptotics of the finite-rank
input profiles, and determine the exact ranges of $p$.
Throughout the proof, the operator identities are initially
understood on test functions, or through the indicated
eigenfunction pairings.

\medskip
\noindent\emph{Step 1: isolating the finite-rank term.}
Choose an orthonormal basis $\{\psi_j\}_{j=1}^k$ of
$\mathcal E=\mathcal P_eL^2(\mathbb R^n)$.
The zero-energy equation gives
\begin{equation}\label{eq:eigen-rep-hd}
\begin{gathered}
\psi_j=-(-\Delta)^{-1}(V\psi_j),\\
(-\Delta)^{-1}V\mathcal P_e=-\mathcal P_e,
\qquad \mathcal P_eV(-\Delta)^{-1}=-\mathcal P_e.
\end{gathered}
\end{equation}
Apply the free resolvent identity
\[
R_0^+(\lambda^2)=(-\Delta)^{-1}
+\lambda^2(-\Delta)^{-1}R_0^+(\lambda^2)
\]
to \eqref{eq:def-Wesing-hd}. This gives
$\mathcal W_e=\mathcal N_{4,m}+\mathcal N_{4,r}$, where
\begin{align*}
\mathcal N_{4,m}
&=\frac1{\pi i}\int_0^\infty\lambda^{-1}\chi_0(\lambda)
(-\Delta)^{-1}V\mathcal P_eV
(R_0^+-R_0^-)(\lambda^2)\,d\lambda,\\
\mathcal N_{4,r}
&=\frac1{\pi i}\int_0^\infty\lambda\chi_0(\lambda)
(-\Delta)^{-1}R_0^+(\lambda^2)V\mathcal P_eV
(R_0^+-R_0^-)(\lambda^2)\,d\lambda.
\end{align*}
The free spectral-measure identity is
\begin{equation}\label{eq:free-measure}
\frac{\lambda}{\pi i}(R_0^+-R_0^-)(\lambda^2)\varphi
=\bigl(\widehat\varphi\,d\sigma_\lambda\bigr)^\vee,
\end{equation}
where $d\sigma_\lambda$ is surface measure on $\{|\xi|=\lambda\}$.
Consequently,
\[
\frac1{\pi i}\int_0^\infty\lambda^{-1}\chi_0(\lambda)
(R_0^+-R_0^-)(\lambda^2)\,d\lambda
=(-\Delta)^{-1}\chi_0(\sqrt{-\Delta}).
\]
Using this identity under the pairing with $V\psi_j$, together
with \eqref{eq:eigen-rep-hd}, we obtain
\begin{align*}
\mathcal N_{4,m}f
=-\sum_{j=1}^k\psi_j
\bigl\langle(-\Delta)^{-1}\chi_0(\sqrt{-\Delta})f,V\psi_j\bigr\rangle
 =\sum_{j=1}^k\psi_j
\bigl\langle f,\chi_0(\sqrt{-\Delta})\psi_j\bigr\rangle.
\end{align*}
These pairings are justified by the decay of $V\psi_j$ and
$(-\Delta)^{-1}V\psi_j=-\psi_j\in L^2$.
Thus
\begin{equation}\label{eq:N4m-identity-hd}
\mathcal N_{4,m}=\mathcal P_e\chi_0(\sqrt{-\Delta}).
\end{equation}

\medskip
\noindent\emph{Step 2: boundedness of the remainder for
$2\le p\le\infty$.}
Split the factor $(-\Delta)^{-1}$ in $\mathcal N_{4,r}$ as
\[
(-\Delta)^{-1}
=\frac{\chi_0(\sqrt{-\Delta})}{-\Delta}
+\frac{\widetilde\chi_0(\sqrt{-\Delta})}{-\Delta},
\]
and denote the resulting terms by $\mathcal N_{4,r,0}$ and
$\mathcal N_{4,r,1}$, respectively.

For $\mathcal N_{4,r,1}$, the symbol
$\widetilde\chi_0(|\xi|)|\xi|^{-2}$ has an integrable inverse
Fourier transform: its kernel is $\mathcal{O}(|x|^{2-n})$ near zero and
decays to arbitrary order at infinity.
The stationary integral to its right is also bounded on every
$L^p$. Indeed, boundedness of the eigenfunctions gives
\[
V\mathcal P_eV=\sum_{j=1}^k(V\psi_j)\langle\cdot,V\psi_j\rangle,
\qquad |V\mathcal P_eV(z,w)|\lesssim|V(z)||V(w)|.
\]
The proof of Lemma~\ref{lem:bound-of-re-part} therefore applies
without its left spectral cutoff. Every local singularity
$|x-z|^{-d}$, $0<d<n$, is now integrated directly against $|V(z)|$,
using
\[
\sup_x\int_{\mathbb R^n}|V(z)|\,|x-z|^{-d}\,dz<\infty.
\]
The terms independent of $\lambda$ are treated by the same spectral-multiplier
identity, and the remaining terms by the same oscillatory
estimates. This proves the required bound for $\mathcal N_{4,r,1}$.

For $\mathcal N_{4,r,0}$, we identify its adjoint.
By \eqref{eq:free-measure} and the coarea formula,
\begin{equation}\label{eq:Foure express-hd}
\begin{aligned}
\mathcal F(\mathcal W_ef)(\xi)
={}&\sum_{j=1}^k\frac{\widehat{V\psi_j}(\xi)}{(2\pi)^n}
\lim_{\varepsilon\downarrow0}\int_{\mathbb R^n}
\frac{\overline{\widehat{V\psi_j}(\eta)}\,
\chi_0(|\eta|)\widehat f(\eta)}
{|\eta|^2(|\xi|^2-|\eta|^2-i\varepsilon)}\,d\eta.
\end{aligned}
\end{equation}
On the other hand,
\[
\mathcal N_{4,r,0}^*
=\frac1{\pi i}\int_0^\infty\lambda\chi_0(\lambda)
(R_0^+-R_0^-)(\lambda^2)V\mathcal P_eV
R_0^-(\lambda^2)\frac{\chi_0(\sqrt{-\Delta})}{-\Delta}\,d\lambda.
\]
Applying \eqref{eq:free-measure} to the outer spectral difference
sets $\lambda=|\xi|$ and yields
\begin{equation}\label{eq:dual-form-hd}
\begin{aligned}
\mathcal F(\mathcal N_{4,r,0}^*f)(\xi)
={}&\sum_{j=1}^k
\frac{\chi_0(|\xi|)\widehat{V\psi_j}(\xi)}{(2\pi)^n}
\lim_{\varepsilon\downarrow0}\int_{\mathbb R^n}
\frac{\overline{\widehat{V\psi_j}(\eta)}\,
\chi_0(|\eta|)\widehat f(\eta)}
{|\eta|^2(|\eta|^2-|\xi|^2+i\varepsilon)}\,d\eta.
\end{aligned}
\end{equation}
Comparing the denominators in these two formulas gives
\[
\mathcal N_{4,r,0}^*
=-\chi_0(\sqrt{-\Delta})\mathcal W_e.
\]
The right-hand side is bounded on $L^p$ for $1\le p\le2$ by
the estimates recalled at the start. Duality therefore gives
the boundedness of $\mathcal N_{4,r,0}$, and hence
$\mathcal N_{4,r}$, for $2\le p\le\infty$.
At $p=\infty$, the extension is the adjoint of the corresponding
$L^1$ operator and agrees with the stationary expression on test
functions.

\medskip
\noindent\emph{Step 3: moment asymptotics of the input profiles.}
We compare the large-$x$ expansions of $\psi_j$ and
$\chi_0(\sqrt{-\Delta})\psi_j$. Set
$b_0=((n-2)|\mathbb S^{n-1}|)^{-1}$.
By \eqref{eq:eigen-rep-hd},
\[
\psi_j(x)=-b_0\int_{\mathbb R^n}|x-y|^{2-n}(V\psi_j)(y)\,dy.
\]
The usual Newton-kernel estimate gives
$|\psi_j(x)|\lesssim\langle x\rangle^{2-n}$.
In particular, $\langle\cdot\rangle^{n+2}V\psi_j\in L^1$
under $\beta>n+4$. We use the moments
\begin{equation}\label{eq:moments-hd}
\begin{aligned}
&m_0^{(j)}=\int_{\mathbb R^n}V\psi_j\,dy,
\qquad d_i^{(j)}=\int_{\mathbb R^n}y_iV\psi_j\,dy,\\
&m_{i,l}^{(j)}=\int_{\mathbb R^n}y_iy_lV\psi_j\,dy,
\qquad m_{i,l,m}^{(j)}=\int_{\mathbb R^n}y_iy_ly_mV\psi_j\,dy,\\
&\bar m^{(j)}=\sum_i m_{i,i}^{(j)},
\qquad \mu_{i,l}^{(j)}=m_{i,l}^{(j)}-\frac{\delta_{il}}n\bar m^{(j)}.
\end{aligned}
\end{equation}
Thus $m_1^{(j)}=(d_1^{(j)},\ldots,d_n^{(j)})$, and
\begin{equation}\label{eq:moment-char-hd}
\begin{aligned}
\psi_j\in\mathcal E_0
&\iff m_0^{(j)}=0,\\
\psi_j\in\mathcal E_1
&\iff m_0^{(j)}=0\ \text{and}\ m_1^{(j)}=0,\\
\psi_j\in\mathcal E_2
&\iff \psi_j\in\mathcal E_1\ \text{and}\
\mu_{i,l}^{(j)}=0\ \text{for all }i,l.
\end{aligned}
\end{equation}
The last equivalence follows because the trace-free quadratic
polynomials $y_iy_l-|y|^2\delta_{il}/n$ span $\mathcal A_2$.

For $|y|\le|x|/2$, expand the Newton kernel to second order:
\[
\begin{aligned}
|x-y|^{2-n}
={}&|x|^{2-n}+(n-2)\frac{x\cdot y}{|x|^n}\\
&+\frac{n(n-2)}2\sum_{i,l}
\frac{x_ix_l}{|x|^{n+2}}
\left(y_iy_l-\frac{|y|^2}n\delta_{il}\right)
+\mathcal{O}(|x|^{-n-1}|y|^3).
\end{aligned}
\]
The integral over $|y|>|x|/2$, as well as the tails of the
Taylor terms, is $\mathcal{O}(|x|^{2-\beta})$ by boundedness of $\psi_j$
and the decay of $V$. Since $\beta>n+4$, we obtain
\begin{equation}\label{eq:Psij-asymp-hd}
\begin{aligned}
\psi_j(x)=-b_0\bigg[m_0^{(j)}|x|^{2-n}
+(n-2)\frac{x\cdot m_1^{(j)}}{|x|^n}+\frac{n(n-2)}2\sum_{i,l}\mu_{i,l}^{(j)}
\frac{x_ix_l}{|x|^{n+2}}\bigg]
+\mathcal{O}(|x|^{-n-1}).
\end{aligned}
\end{equation}
For $n=3,4$, the first term is incompatible with $\psi_j\in L^2$
unless $m_0^{(j)}=0$. This recovers $\mathcal E=\mathcal E_0$.

To identify the effect of the spectral cutoff, use
$\widehat\psi_j(\eta)=-|\eta|^{-2}\widehat{V\psi_j}(\eta)$ and
expand
\begin{equation}\label{eq:VPsi-taylor-hd}
\begin{aligned}
\widehat{V\psi_j}(\eta)
=m_0^{(j)}-i\eta\cdot m_1^{(j)}
-\frac{\bar m^{(j)}}{2n}|\eta|^2
-\frac12\sum_{i,l}\mu_{i,l}^{(j)}\eta_i\eta_l+\frac i6\sum_{i,l,m}m_{i,l,m}^{(j)}\eta_i\eta_l\eta_m
+E_j(\eta).
\end{aligned}
\end{equation}
Taylor's theorem gives
$|\partial_\eta^\alpha E_j(\eta)|\lesssim|\eta|^{4-|\alpha|}$
for $|\alpha|\le4$, while its derivatives of orders
$4<|\alpha|\le n+1$ are bounded near zero.

The constant and linear terms reproduce the first two terms of
\eqref{eq:Psij-asymp-hd}. For the quadratic term, use the
distributional identity
\begin{equation}\label{eq:Riesz-identity-hd}
\begin{aligned}
\mathcal F^{-1}(\eta_i\eta_l|\eta|^{-2})
&=-\partial_i\partial_l(b_0|x|^{2-n})\\
&=-b_0(n-2)\,\mathrm{p.v.}\,
\frac{nx_ix_l-\delta_{il}|x|^2}{|x|^{n+2}}
+\frac{\delta_{il}}n\delta_0.
\end{aligned}
\end{equation}
The delta coefficient follows by taking the trace and using
$-\Delta(b_0|x|^{2-n})=\delta_0$.
Contraction with the trace-free matrix $(\mu_{i,l}^{(j)})$
eliminates both terms containing $\delta_{il}$ and leaves the
harmonic quadratic term in \eqref{eq:Psij-asymp-hd}.
In contrast, the trace term in \eqref{eq:VPsi-taylor-hd} cancels
the denominator $|\eta|^2$ and contributes the Schwartz function
$\frac{\bar m^{(j)}}{2n}\mathcal F^{-1}(\chi_0(|\cdot|))$.

For these homogeneous symbols, insertion of $\chi_0$ changes
their kernels by a rapidly decaying term away from the origin.
The cutoff cubic terms are $\mathcal{O}(|x|^{-n-1})$ at infinity.
Finally, the compactly supported symbol
$E_j(\eta)\chi_0(|\eta|)|\eta|^{-2}$ has derivatives bounded by
$C|\eta|^{2-|\alpha|}$ for $|\alpha|\le n+1$.
Its derivatives of order $n+1$ are integrable, so integration by
parts gives the same $\mathcal{O}(|x|^{-n-1})$ bound for its inverse
Fourier transform. We conclude that
\begin{equation}\label{eq:chiPsij-asymp-hd}
\begin{aligned}
\chi_0(\sqrt{-\Delta})\psi_j(x)
={}&-b_0m_0^{(j)}|x|^{2-n}
-b_0(n-2)\frac{x\cdot m_1^{(j)}}{|x|^n}\\
&-\frac{b_0n(n-2)}2\sum_{i,l}\mu_{i,l}^{(j)}
\frac{x_ix_l}{|x|^{n+2}}\\
&+\frac{\bar m^{(j)}}{2n}\mathcal F^{-1}(\chi_0(|\cdot|))(x)
+\mathcal{O}(|x|^{-n-1}),\qquad |x|\to\infty.
\end{aligned}
\end{equation}
Thus the first three possible nonzero homogeneous terms have
orders $|x|^{2-n}$, $|x|^{1-n}$, and $|x|^{-n}$, respectively.
When $\psi_j\in\mathcal E_2$, all three vanish and the input
profile belongs to $L^1$.

\medskip
\noindent\emph{Step 4: the $L^p$ classification.}
For $2\le p\le\infty$, every output profile $\psi_j$ belongs
to $L^p$, because $\psi_j\in L^2\cap L^\infty$.
Therefore \eqref{eq:N4m-identity-hd} implies
\begin{equation}\label{eq:we-finite-rank-criterion}
\mathcal N_{4,m}:L^p\longrightarrow L^p
\quad\Longleftrightarrow\quad
\chi_0(\sqrt{-\Delta})\psi_j\in L^{p'}
\ \text{for every }j.
\end{equation}
Sufficiency follows from H\"older's inequality.
For necessity, choose $\phi_l\in C_c^\infty$ such that
$\langle\psi_j,\phi_k\rangle=\delta_{jk}$, which is possible
by linear independence. Pairing $\mathcal N_{4,m}f$ with $\phi_k$
isolates the functional
$\langle f,\chi_0(\sqrt{-\Delta})\psi_k\rangle$.
Its boundedness on compactly supported test functions gives
the required $L^{p'}$ membership, including $L^1$ when $p=\infty$.
By Step~2, the same criterion holds for $\mathcal W_e$.

For use in the necessity statements, a nonzero homogeneous term
$|x|^{-\gamma}A(x/|x|)$, with continuous nonzero $A$, cannot
belong to $L^q$ when $\gamma q\le n$.
Indeed, $|A|$ is bounded below on an open subset of the sphere,
and the radial integral there diverges. The faster-decaying errors
in \eqref{eq:chiPsij-asymp-hd} do not change this conclusion.

\begin{enumerate}[label=(\roman*)]
\item In general, the input profiles are
$\mathcal{O}(\langle x\rangle^{2-n})$. For $n\ge5$, they belong to
$L^{p'}$ when $p<n/2$, giving the bounds for $2\le p<n/2$.
Together with the initial $1\le p\le2$ estimates, this proves
the asserted boundedness range. If
$\mathcal E\setminus\mathcal E_0\ne\varnothing$, then some
$m_0^{(j)}\ne0$, so that input profile fails to lie in $L^{p'}$
for $p'\le n/(n-2)$, equivalently $p\ge n/2$.
The criterion \eqref{eq:we-finite-rank-criterion} proves precisely
the claimed unboundedness range.

\item If $\mathcal E=\mathcal E_0$, the input profiles are
$\mathcal{O}(\langle x\rangle^{1-n})$ and belong to $L^{p'}$ for
$p'>n/(n-1)$, equivalently $p<n$.
This gives boundedness for $2\le p<n$ and, with the initial
estimates, for $1\le p<n$.
If $\mathcal E_0\setminus\mathcal E_1\ne\varnothing$, some
$m_1^{(j)}\ne0$. The corresponding nonzero dipole term excludes
$L^{p'}$ membership for $p'\le n/(n-1)$, and hence excludes
$L^p$-boundedness for every $p\ge n$.

\item If $\mathcal E=\mathcal E_1$, the input profiles are
$\mathcal{O}(\langle x\rangle^{-n})$, so they belong to every $L^{p'}$
with $p'>1$. This proves boundedness for $2\le p<\infty$;
the initial estimates include the remaining range.
If $\mathcal E_1\setminus\mathcal E_2\ne\varnothing$, some
trace-free matrix $(\mu_{i,l}^{(j)})$ is nonzero. Its quadratic
form is a nonzero spherical harmonic, so the corresponding
$|x|^{-n}$ term is not integrable. Thus that input profile
does not belong to $L^1$, and $\mathcal W_e$ is unbounded on
$L^\infty$.

\item If $\mathcal E=\mathcal E_2$, every input profile belongs
to $L^1$ by Step~3. In particular,
\[
\|\mathcal N_{4,m}f\|_\infty
\le\sum_{j=1}^k\|\psi_j\|_\infty
\|\chi_0(\sqrt{-\Delta})\psi_j\|_1\,\|f\|_\infty.
\]
Together with Step~2, this proves the $L^\infty$-bound.
\end{enumerate}
This completes the proof.
\end{proof}

\subsection{Completion of the proof of Theorem~\ref{thm:main-infty}}
\label{sec:final-comparison}

Proposition~\ref{lem:bound of we} treats the leading term
$\mathcal W_e$. We now include $\mathcal W_{e,r}$. The sum again
splits into a finite-rank operator and a bounded remainder.
We show that the correction to each input function is integrable
at the relevant critical exponent, so it cannot cancel a
nonintegrable leading term.
Throughout, fix an orthonormal basis $\{\psi_i\}_{i=1}^d$ of
$\mathcal E$ and put
\[
q_i=\chi_0(\sqrt{-\Delta})\psi_i,
\qquad k_0=\left[\frac{n+3}{2}\right].
\]

We first deduce a Fourier multiplier estimate from
Lemma~\ref{lem-oscillatory estimate}.
\begin{lemma}\label{lem:vanishing-radial-multiplier}
Let $n\ge3$, $k_0=[(n+3)/2]$, and let $b$ be compactly supported
in $[0,\infty)$ and smooth away from zero. Suppose that
\[
|\partial_\lambda^k b(\lambda)|\lesssim_k\lambda^{\sigma-k},
\qquad 0\le k\le k_0,
\]
near zero, for some $\sigma>0$. Then $b(\sqrt{-\Delta})$ has
an integrable convolution kernel and is bounded on $L^p$ for
every $1\le p\le\infty$.
\end{lemma}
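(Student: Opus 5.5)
The convolution kernel of $b(\sqrt{-\Delta})$ is radial and is given by
\[
K(x)=\frac{1}{(2\pi)^n}\int_{\mathbb R^n}e^{ix\cdot\xi}b(|\xi|)\,d\xi .
\]
The plan is to show $K\in L^1(\mathbb R^n)$; Young's inequality then gives boundedness on every $L^p$, $1\le p\le\infty$.

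Near the origin there is nothing to do. Since $b$ is compactly supported and $|b(\lambda)|\lesssim\lambda^{\sigma}$ with $\sigma>0$, the function $b(|\xi|)$ is bounded with compact support. Hence $b(|\xi|)\in L^1$, $K$ is bounded, and $\int_{|x|\le1}|K|<\infty$.

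For $|x|\ge1$ I would write $K$ through the spectral measure, in the form already used in the paper:
\[
K(x)=c\int_0^\infty b(\lambda)\,\lambda^{n-1}\widehat{d\sigma}(\lambda x)\,d\lambda .
\]
I would split the sphere transform into a part where $\lambda|x|\lesssim1$ and a part where $\lambda|x|\gtrsim1$. On the second part the Bessel asymptotics give, as in \eqref{eq:kernel-est-4},
\[
\widehat{d\sigma}(\lambda x)=(\lambda|x|)^{-\frac{n-1}{2}}\bigl(e^{i\lambda|x|}\omega_+(\lambda|x|)+e^{-i\lambda|x|}\omega_-(\lambda|x|)\bigr),\qquad \omega_\pm=\mathcal O_\infty(1).
\]
Only this large-argument part needs Lemma~\ref{lem-oscillatory estimate}. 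Its contribution is
\[
|x|^{-\frac{n-1}{2}}\int_0^\infty e^{\pm i\lambda|x|}\,m_\pm(\lambda)\,\chi_0(\lambda)\,d\lambda,
\qquad
m_\pm(\lambda)=b(\lambda)\,\lambda^{\frac{n-1}{2}}\,\omega_\pm(\lambda|x|)\,\tilde\chi(\lambda|x|).
\]
Here $\lambda_0$ is chosen so that $\chi_0=1$ on $\operatorname{supp}b$, after rescaling if needed. If $b$ is not supported in $[0,\lambda_0/2]$, I would treat the piece away from zero separately: it is a smooth compactly supported radial symbol, and integration by parts shows its kernel is Schwartz.

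The amplitude $m_\pm$ satisfies $m_\pm=\mathcal O_{k_0}(\lambda^{\frac{n-1}{2}+\sigma'})$ uniformly in $x$, where $\sigma'=\min(\sigma,\tfrac12)-$. This holds because derivatives of $\omega_\pm(\lambda|x|)\tilde\chi(\lambda|x|)$ obey $\lambda^{-k}$ bounds uniformly in $|x|$. Taking $\tilde b=\frac{n-1}{2}+\sigma'$, one has $[\tilde b]+2\le k_0=[(n+3)/2]$, so the hypothesis of Lemma~\ref{lem-oscillatory estimate} is met. The lemma then bounds this contribution by
\[
|x|^{-\frac{n-1}{2}}\langle x\rangle^{-\frac{n+1}{2}-\sigma'}=|x|^{-n-\sigma'},
\]
which is integrable on $|x|\ge1$.

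The small-argument part, where $\lambda\lesssim|x|^{-1}$, does not oscillate. Bounding it directly gives
\[
\Bigl|\int_0^{|x|^{-1}}b(\lambda)\lambda^{n-1}\,d\lambda\Bigr|\lesssim\int_0^{|x|^{-1}}\lambda^{\sigma+n-1}\,d\lambda\lesssim|x|^{-n-\sigma},
\]
which is also integrable at infinity. Combining the three regions yields $K\in L^1$.

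The main obstacle is the derivative bookkeeping. The count must respect the limit of $k_0$ derivatives, which works only because $\sigma'$ is capped below $\tfrac12$ (this is why $\sigma'$ carries the $\min$). The estimates must also stay uniform when the derivatives fall on the cutoff $\tilde\chi(\lambda|x|)$; each such derivative produces a factor $|x|\sim\lambda^{-1}$ on its support, which keeps it within the $\lambda^{-k}$ symbol class.
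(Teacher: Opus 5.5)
Your proposal is correct and follows essentially the paper's proof: the spectral-measure representation of the kernel for $|x|\ge1$, Lemma~\ref{lem-oscillatory estimate} with exponent $\frac{n-1}{2}+\varepsilon$ where $0<\varepsilon<\min\{\sigma,1/2\}$ (chosen precisely so that $[b]+2=k_0$), and the resulting integrable bound $|x|^{-n-\varepsilon}$. The only difference is cosmetic: the paper uses \eqref{eq:kernel-est-4} for all values of $\lambda|x|$, since $\omega_\pm=\mathcal O_\infty(1)$ on the whole half-line, so it needs no separate small-argument estimate; your extra $|x|^{-n-\sigma}$ bound for that region is valid but unnecessary.
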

\begin{proof}
The kernel is bounded for $r=|x-y|\le1$, since
$b(|\xi|)\in L^1(\mathbb R^n)$. For $r\ge1$, the free
spectral-measure identity \eqref{eq:free-measure} and
\eqref{eq:kernel-est-4} express it as a linear combination of
\[
r^{-\frac{n-1}{2}}\int_0^\infty e^{\pm i\lambda r}
\lambda^{\frac{n-1}{2}}b(\lambda)\omega_\pm(\lambda r)\,d\lambda.
\]
Choose $0<\varepsilon<\min\{\sigma,1/2\}$.
By \eqref{eq:est-for-omega} and the product rule,
\[
\left|\partial_\lambda^k
\bigl[\lambda^{\frac{n-1}{2}}b(\lambda)
\omega_\pm(\lambda r)\bigr]\right|
\lesssim\lambda^{\frac{n-1}{2}+\varepsilon-k},
\qquad 0\le k\le k_0,
\]
uniformly in $r$. Apply Lemma~\ref{lem-oscillatory estimate}
with power exponent $\frac{n-1}{2}+\varepsilon$ and logarithmic
exponent zero, inserting a smooth cutoff equal to one on the
support of $b$.
It follows that the kernel is bounded by
$Cr^{-\frac{n-1}{2}}r^{-\frac{n+1}{2}-\varepsilon}
=Cr^{-n-\varepsilon}$ for $r\ge1$, proving its integrability.
\end{proof}

\medskip
\noindent\emph{The mixed eigenfunction terms.}
Set $\mathcal P_{i,j}f=\psi_i\langle f,\psi_j\rangle$.
Write $\mathcal W_{i,j}$ for the operator obtained from
$\mathcal W_e$ by replacing $\mathcal P_e$ with $\mathcal P_{i,j}$.
The kernel estimates recalled at the beginning of Subsection~\ref{sec:eigen-contribution}
give
\begin{equation}\label{eq:mixed-lower-bounds}
\mathcal W_{i,j}:L^p\longrightarrow L^p,
\qquad 1\le p\le2.
\end{equation}
For clarity, the passage from diagonal to mixed terms can also be
made by polarization. With $\mathcal P_u f=u\langle f,u\rangle$,
\[
\mathcal P_{i,j}
=\frac14\sum_{m=0}^3 \mathrm{i}^{m}\mathcal P_{\psi_i+\mathrm{i}^{m}\psi_j}.
\]
Each profile in this identity is a zero-energy eigenfunction, so
the individual-profile estimates apply to every summand.

The free resolvent identity used in Step~1 of Proposition~\ref{lem:bound of we} gives
\begin{equation}\label{eq:mixed-main-remainder}
\mathcal W_{i,j}f
=\psi_i\langle f,q_j\rangle+\mathcal B_{i,j}f,
\end{equation}
where
\[
\mathcal B_{i,j}
=\frac1{\pi i}\int_0^\infty\lambda\chi_0(\lambda)
(-\Delta)^{-1}R_0^+(\lambda^2)V\mathcal P_{i,j}V
(R_0^+-R_0^-)(\lambda^2)\,d\lambda.
\]
The high-frequency part of $(-\Delta)^{-1}$ is treated exactly as
in Step~2 of Proposition~\ref{lem:bound of we}, since
$|V\mathcal P_{i,j}V(z,w)|\lesssim|V(z)||V(w)|$.
For its low-frequency part, the computation in
\eqref{eq:Foure express-hd}--\eqref{eq:dual-form-hd}, with the
indices interchanged on taking the adjoint, gives
\[
\mathcal B_{i,j,0}^{*}
=-\chi_0(\sqrt{-\Delta})\mathcal W_{j,i}.
\]
Thus \eqref{eq:mixed-lower-bounds} and duality imply
\begin{equation}\label{eq:mixed-remainder-upper}
\mathcal B_{i,j}:L^p\longrightarrow L^p,
\qquad 2\le p\le\infty.
\end{equation}
The endpoint statement refers to the integral-kernel extension;
it follows from the $L^1$-bounded adjoint as in that proof.

\medskip
\noindent\emph{The finite-rank reduction.}
Extend each coefficient $g_{ij}$ smoothly away from zero,
agreeing with its original values on the support of $\chi_0$.
The extra cutoff $\chi_0(\lambda/2)$ will make every multiplier
compactly supported. Since it equals one on the support of
$\chi_0(\lambda)$, it does not change the stationary integrals.

The expansion of $G$ in the
basis $f_i=w\psi_i$ is
\[
S_1G(\lambda)S_1
=\sum_{i,j=1}^d g_{ij}(\lambda)f_i\langle\cdot,f_j\rangle,
\qquad g_{ij}=\mathcal O_{k_0}(\lambda).
\]
Since the spectral difference is supported on $|\xi|=\lambda$,
\begin{equation}\label{eq:correction-factorization}
\mathcal W_{e,r}
=\sum_{i,j=1}^d\mathcal W_{i,j}g_{ij}(\sqrt{-\Delta})\chi_0(\sqrt{-\Delta}/2).
\end{equation}
All these scalar Fourier multipliers $g_{ij}(\sqrt{-\Delta})\chi_0(\sqrt{-\Delta}/2)$ are bounded on every $L^p$ by
Lemma~\ref{lem:vanishing-radial-multiplier}.
Combining \eqref{eq:mixed-main-remainder} and
\eqref{eq:correction-factorization} gives the exact decomposition
\begin{equation}\label{eq:full-eigen-finite-rank}
\mathcal W_e+\mathcal W_{e,r}
=\mathcal M+\mathcal B,
\qquad
\mathcal Mf=\sum_{i=1}^d\psi_i\langle f,h_i\rangle,
\end{equation}
where
\begin{equation}\label{eq:corrected-input-profiles}
\begin{aligned}
h_i&=q_i+\sum_{j=1}^d\overline{g_{ij}}(\sqrt{-\Delta})q_j,\\
\mathcal B&=\sum_i\mathcal B_{i,i}
+\sum_{i,j}\mathcal B_{i,j}
g_{ij}(\sqrt{-\Delta})\chi_0(\sqrt{-\Delta}/2).
\end{aligned}
\end{equation}
In the formula for $h_i$, the extra cutoff is redundant because
$\chi_0(\sqrt{-\Delta}/2)q_j=q_j$.
The conjugation in this formula follows from our convention that
the inner product is linear in its first argument.
By \eqref{eq:mixed-remainder-upper}, $\mathcal B$ is bounded on
$L^p$ for $2\le p\le\infty$.
Since $\mathcal P_e$ is the spectral projection of $H$ at zero
and $\chi_0(0)=1$, the spectral theorem gives
\begin{equation}\label{eq:cutoff-preserves-eigenspace}
\chi_0(\sqrt H/2)\mathcal P_e
=\chi_0(0)\mathcal P_e=\mathcal P_e.
\end{equation}
In particular, $\chi_0(\sqrt H/2)\psi_i=\psi_i$ for every $i$.
Since the range of $\mathcal M$ is contained in $\mathcal E$, we obtain
\[
\chi_0(\sqrt H/2)(\mathcal M+\mathcal B)
=\mathcal M+\chi_0(\sqrt H/2)\mathcal B.
\]
Consequently, \eqref{eq-stationary formula-1} yields
\begin{equation}\label{eq:wave-finite-rank-reduction}
W_-^{\mathrm{low}}=\mathcal B_0-\mathcal M,
\qquad
\mathcal B_0:L^p\longrightarrow L^p
\quad(2\le p\le\infty).
\end{equation}
Here
\[
\mathcal B_0
=\chi_0(\sqrt H/2)\chi_0(\sqrt{-\Delta})
-\mathcal W_R-\chi_0(\sqrt H/2)\mathcal B.
\]
Its boundedness follows from Lemma~\ref{lem:bound-of-re-part},
the boundedness of $\mathcal B$, and the admissibility of the spectral
cutoffs established after \eqref{eq-stationary formula-1}.
Thus the left spectral cutoff remains in the bounded remainder
and acts as the identity on the finite-rank term.

\medskip
\noindent\emph{The comparison in the critical dual spaces.}
The additional vanishing of $g_{ij}$ at zero improves the
integrability of $h_i-q_i$. The following lemma makes this
improvement precise, using the spaces $\mathcal E_\ell$ of the
introduction and the convention $\mathcal E_{-1}=\mathcal E$.

\begin{lemma}\label{lem:critical-profile-correction}
Fix $0<\delta<1/2$ and $\ell\in\{-1,0,1\}$, and assume that
$\mathcal E=\mathcal E_\ell$. Then, for every $j$,
\begin{equation}\label{eq:fractional-profile-decay}
\left|(-\Delta)^{\delta/2}q_j(x)\right|
\lesssim\langle x\rangle^{-(n+\ell-1+\delta)}.
\end{equation}
Consequently, with
\[
q_*:=\frac{n}{n+\ell-1},
\]
one has
\begin{equation}\label{eq:critical-correction-integrable}
h_i-q_i\in L^{q_*}(\mathbb R^n),\qquad 1\le i\le d.
\end{equation}
\end{lemma}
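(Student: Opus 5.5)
The proof has two parts: a pointwise bound on $(-\Delta)^{\delta/2}q_j$, which we obtain from the Fourier side, and then the integrability of $h_i-q_i$, obtained by factoring out $(-\Delta)^{\delta/2}$ from the correction multipliers $g_{ij}$.

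\emph{The fractional decay.} We write
\[
\widehat{q_j}(\eta)=-\chi_0(|\eta|)|\eta|^{-2}\widehat{V\psi_j}(\eta),
\]
so the symbol of $(-\Delta)^{\delta/2}q_j$ is $-\chi_0(|\eta|)|\eta|^{\delta-2}\widehat{V\psi_j}(\eta)$. Under $\mathcal E=\mathcal E_\ell$ the Taylor expansion \eqref{eq:VPsi-taylor-hd} begins at order $\ell+1$. Its leading part is then a homogeneous polynomial $P_{\ell+1}$ of degree $\ell+1$; this is $m_0$ for $\ell=-1$, $-i\eta\cdot m_1$ for $\ell=0$, and the full quadratic part, trace term included, for $\ell=1$. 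All remaining terms are $\mathcal O(|\eta|^{\ell+2})$ with symbol-type derivative bounds up to order $n+1$, which is available since $\langle\cdot\rangle^{n+2}V\psi_j\in L^1$.

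The leading contribution $\chi_0(|\eta|)P_{\ell+1}(\eta)|\eta|^{\delta-2}$ is a cutoff homogeneous symbol of degree $\ell-1+\delta>-n$. Its inverse Fourier transform is therefore a homogeneous function of degree $-(n+\ell-1+\delta)$, smooth away from the origin, plus a rapidly decaying error. Since $\delta\notin\mathbb Z$, no polynomial or delta-type anomaly arises.

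For the remainder symbol, which behaves like $|\eta|^{\ell+\delta}$ with symbol derivatives, we estimate the inverse Fourier transform by the standard dyadic (Littlewood--Paley) argument. Alternatively, we can integrate by parts $N$ times with $N>n+\ell+\delta$ away from a ball of radius $|x|^{-1}$. Either way the decay is $|x|^{-(n+\ell+\delta)}$, which is better than needed. Local boundedness follows because each symbol is in $L^1$. Together these give \eqref{eq:fractional-profile-decay}.

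\emph{The integrability of the correction.} Each correction term in \eqref{eq:corrected-input-profiles} factors as
\[
\overline{g_{ij}}(\sqrt{-\Delta})q_j=m_{ij}(\sqrt{-\Delta})\,(-\Delta)^{\delta/2}q_j,
\qquad m_{ij}(\lambda)=\overline{g_{ij}}(\lambda)\chi_0(\lambda/2)\lambda^{-\delta}.
\]
Since $g_{ij}=\mathcal O_{k_0}(\lambda)$, the multiplier satisfies $m_{ij}=\mathcal O_{k_0}(\lambda^{1-\delta})$ with $1-\delta>0$. Lemma~\ref{lem:vanishing-radial-multiplier} then shows that $m_{ij}(\sqrt{-\Delta})$ is convolution with an integrable kernel, and hence is bounded on every $L^p$.

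It remains to check that $(-\Delta)^{\delta/2}q_j\in L^{q_*}$. The decay exponent satisfies
\[
(n+\ell-1+\delta)\,q_*=n\,\frac{n+\ell-1+\delta}{n+\ell-1}>n,
\]
and the function is bounded, so it lies in $L^{q_*}$. Note that $q_*\ge1$ because $\ell\ge-1$, and that $q_*=n/(n-2)$ for $\ell=-1$, which requires $n\ge3$. Young's inequality then gives $h_i-q_i\in L^{q_*}$.

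\emph{Main obstacle.} The delicate step is the rigorous decay bound for the Taylor remainder symbol with only finite smoothness, about $n+1$ derivatives near zero. I would handle it dyadically: split the symbol at scale $|\eta|\sim2^{-k}$, integrate by parts $n+1$ times on pieces with $2^{-k}\ge|x|^{-1}$, and bound the remaining pieces trivially. This gives $|x|^{-(n+\ell+\delta)}$ provided $\ell+\delta<1$, which holds for all $\ell\le1$ after taking the Taylor expansion one order further when $\ell=1$. The identification of the leading inverse Fourier transform as a homogeneous function of non-integer degree is standard.
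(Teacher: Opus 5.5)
Your proposal is correct and follows essentially the same route as the paper. Both arguments use the Fourier-side Taylor expansion of $\widehat{V\psi_j}$ under $\mathcal E=\mathcal E_\ell$ to isolate a leading homogeneous term of degree $\ell-1+\delta$, and both then use the factorization $\overline{g_{ij}}(\sqrt{-\Delta})q_j=m_{ij}(\sqrt{-\Delta})(-\Delta)^{\delta/2}q_j$ with Lemma~\ref{lem:vanishing-radial-multiplier}.

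Your concern about $\ell=1$ in the last paragraph is unnecessary, and the proposed fix of expanding one order further does not help. The $n+1$ integrations by parts already give the remainder an $\mathcal O(|x|^{-n-1})$ bound, and this suffices because $n+\ell-1+\delta<n+1$, as the paper notes.
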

\begin{proof}
Since $\psi_j\in L^\infty$, the decay assumption gives
$|V(x)\psi_j(x)|\lesssim\langle x\rangle^{-\beta}$,
where $\beta>n+4$.
By the eigenfunction equation and the definition of $q_j$,
\[
\mathcal F\bigl((-\Delta)^{\delta/2}q_j\bigr)(\xi)
=-|\xi|^{\delta-2}\chi_0(|\xi|)\widehat{V\psi_j}(\xi).
\]
Use the Fourier moment expansion \eqref{eq:VPsi-taylor-hd}
from Proposition~\ref{lem:bound of we}.
Under $\mathcal E=\mathcal E_\ell$, its terms of degrees at most
$\ell$ vanish, with no cancellation required when $\ell=-1$.
Multiplication by $|\xi|^{\delta-2}$ therefore makes the first
possible homogeneous term have degree $\ell-1+\delta$.
The same inverse-Fourier argument used to obtain
\eqref{eq:chiPsij-asymp-hd} gives
\[
\bigl|(-\Delta)^{\delta/2}q_j(x)\bigr|
\lesssim\langle x\rangle^{-(n+\ell-1+\delta)}.
\]
Here the higher Taylor terms decay faster, and the Taylor
remainder is $\mathcal{O}(|x|^{-n-1})$, which is sufficient
since $\ell\le1$ and $\delta<1/2$.
Thus $(-\Delta)^{\delta/2}q_j\in L^{q_*}$ because
$(n+\ell-1+\delta)q_*>n$.

Finally, the compactly supported symbol
$\lambda^{-\delta}\overline{g_{ij}(\lambda)}\chi_0(\lambda/2)$
belongs to $\mathcal O_{k_0}(\lambda^{1-\delta})$ near zero.
Lemma~\ref{lem:vanishing-radial-multiplier} and the identity
\[
\overline{g_{ij}}(\sqrt{-\Delta})q_j
=\left[\lambda^{-\delta}\overline{g_{ij}(\lambda)}
\chi_0(\lambda/2)\right](\sqrt{-\Delta})
(-\Delta)^{\delta/2}q_j
\]
show that every summand in $h_i-q_i$ belongs to $L^{q_*}$.
This proves \eqref{eq:critical-correction-integrable}.
\end{proof}

\begin{proof}[Proof of Theorem~\ref{thm:main-infty}]
We work with $W_-^{\mathrm{low}}$; complex conjugation gives the
same conclusions for $W_+^{\mathrm{low}}$.
For $1\le p\le2$, boundedness follows from
\eqref{eq:mixed-lower-bounds}, \eqref{eq:correction-factorization},
Lemma~\ref{lem:bound-of-re-part}, and the $L^p$-boundedness of
the left spectral cutoff $\chi_0(\sqrt H/2)$ in
\eqref{eq-stationary formula-1}.
For $2\le p\le\infty$, every $\psi_i$ belongs to $L^p$ because
$\psi_i\in L^2\cap L^\infty$.
We use the finite-rank reduction
\eqref{eq:wave-finite-rank-reduction} to prove boundedness and
then identify the operator responsible for unboundedness.

\medskip
\noindent\emph{Boundedness.}
Step~3 of Proposition~\ref{lem:bound of we} gives
$q_i\in L^{p'}$ in each claimed range, including $L^1$ under
$\mathcal E=\mathcal E_2$.
The compactly supported multipliers
$\overline{g_{ij}}(\sqrt{-\Delta})\chi_0(\sqrt{-\Delta}/2)$ preserve these
spaces, so \eqref{eq:corrected-input-profiles} gives the same
membership for $h_i$. Hence $\mathcal M$ is bounded in these ranges.
By \eqref{eq:cutoff-preserves-eigenspace}, its contribution after
applying the left spectral cutoff is exactly $\mathcal M$.
Together with the boundedness of $\mathcal B_0$, this proves all
the stated boundedness ranges.

\medskip
\noindent\emph{Necessity.}
Fix $2\le p\le\infty$. By
\eqref{eq:wave-finite-rank-reduction}, boundedness of
$W_-^{\mathrm{low}}$ is equivalent to boundedness of $\mathcal M$,
since $\mathcal B_0$ is bounded and the left spectral cutoff acts
as the identity on the range of $\mathcal M$. It therefore suffices
to show that one of its input profiles $h_i$ does not belong to
$L^{p'}$.

To justify this last implication, choose compactly supported smooth
functions $\phi_i$ with
$\langle\psi_j,\phi_i\rangle=\delta_{ji}$, as in Step~4 of
Proposition~\ref{lem:bound of we}. For compactly supported smooth $f$,
\[
\bigl\langle\chi_0(\sqrt H/2)\mathcal Mf,\phi_i\bigr\rangle
=\langle\mathcal Mf,\phi_i\rangle
=\langle f,h_i\rangle.
\]
If $\mathcal M$ were bounded on $L^p$, H\"older's inequality would give
\[
|\langle f,h_i\rangle|
\le\|\mathcal Mf\|_{L^p}\|\phi_i\|_{L^{p'}}
\lesssim\|f\|_{L^p}.
\]
This forces $h_i\in L^{p'}$; at $p=\infty$, the same estimate
on compactly supported test functions forces $h_i\in L^1$.
We now verify the failure of this necessary condition in each case.

First suppose that
$\mathcal E\setminus\mathcal E_0\ne\varnothing$.
Then $n\ge5$ and some $q_i$ has a nonzero term of degree $2-n$
at infinity, so $q_i\notin L^{n/(n-2)}$.
Lemma~\ref{lem:critical-profile-correction} with $\ell=-1$ gives
$h_i-q_i\in L^{n/(n-2)}$. Hence $h_i\notin L^{n/(n-2)}$,
and \eqref{eq:wave-finite-rank-reduction} excludes boundedness on
$L^{n/2}$.

If $\mathcal E=\mathcal E_0$ but
$\mathcal E_0\setminus\mathcal E_1\ne\varnothing$,
some $q_i$ has a nonzero dipole term and does not belong to
$L^{n/(n-1)}$.
The same lemma with $\ell=0$ shows that its correction belongs
to that space. Thus $h_i\notin L^{n/(n-1)}$, and boundedness
on $L^n$ is impossible.
In both cases the critical exponent is strictly larger than two.
Boundedness at any larger exponent, including infinity, would
imply boundedness at the critical exponent by interpolation with
the $L^2$-bound, a contradiction.

Finally, suppose that $\mathcal E=\mathcal E_1$ but
$\mathcal E_1\setminus\mathcal E_2\ne\varnothing$.
Some $q_i$ has a nonzero harmonic quadratic term of order
$|x|^{-n}$ and therefore fails to lie in $L^1$.
With $\ell=1$, Lemma~\ref{lem:critical-profile-correction} gives
$h_i-q_i\in L^1$, so $h_i\notin L^1$.
The finite-rank reduction excludes $L^\infty$-boundedness.
All assertions of the theorem follow.
\end{proof}

\section{Proof of Lemma~\ref{lem:exp-of-M-inv}}
\label{sec:exp of lemma}
\label{sec:block-inverse}

We prove the inverse expansion used in Section~\ref{sec:eigenvalue},
retaining the notation of the first three sections. In particular,
$G_1$ is the free-resolvent coefficient appearing in the term
$\lambda^2vG_1v$ of \eqref{eq:exp of M}. All inverses on projected
subspaces are extended by zero to $L^2$, and all
$\widetilde{\mathcal O}$ estimates have the absolute-kernel meaning
specified there, with derivatives through order $[(n+3)/2]$.

\begin{proof}
Put $Q=I-S_1$. We first justify the inverses and absolute bounds
used in the block-matrix argument.

\medskip
\noindent\emph{The inverses on $QL^2$ and $S_1L^2$.}
Since $vG_0v$ is compact and $U^2=I$, the operator
$T=U+vG_0v$ is self-adjoint and Fredholm. Thus $S_1$ has finite
rank, and $QTQ$ is invertible on $QL^2$. We write
\[
D_0=Q\bigl(QTQ|_{QL^2}\bigr)^{-1}Q.
\]
For $f\in S_1L^2$, set $\psi_f=-G_0vf$. The identity $Tf=0$ gives
\[
f=w\psi_f,\qquad V\psi_f=vf,\qquad H\psi_f=0,
\]
where $w=Uv$. By the pure-eigenvalue hypothesis, $\psi_f\in L^2$;
conversely, every $L^2$ zero mode arises in this way.

In dimensions three and four, $vf\in L^1$ and hence
$\widehat{vf}$ is continuous at zero. Since
\[
\widehat{\psi_f}(\xi)=-|\xi|^{-2}\widehat{vf}(\xi),
\]
the condition $\psi_f\in L^2$ forces $\widehat{vf}(0)=0$:
otherwise its squared $L^2$ norm would contain a positive multiple
of the divergent integral $\int_0^1r^{n-5}\,dr$. Consequently,
\begin{equation}\label{eq:lemma-inv-cancellation}
S_1P=PS_1=0,\qquad n=3,4.
\end{equation}
In particular, $S_2=S_1$ in the four-dimensional notation of Section~3.

For $f,g\in S_1L^2$, we have
\begin{equation}\label{eq:lemma-inv-positive}
\langle vG_1vf,g\rangle
=\frac1{(2\pi)^n}\int_{\mathbb R^n}
\frac{\widehat{vf}(\xi)\overline{\widehat{vg}(\xi)}}{|\xi|^4}\,d\xi
=\langle\psi_f,\psi_g\rangle.
\end{equation}
For $n\ge5$, this follows from the Fourier representation of $G_1$.
For $n=3,4$, compress the sandwiched operator
\[
v\frac{G_0-R_0(-\kappa^2)}{\kappa^2}v
\]
to $S_1L^2$ and let $\kappa\downarrow0$. The free multiplier is
$[|\xi|^2(|\xi|^2+\kappa^2)]^{-1}$, and the constant-kernel terms
in the small-$\kappa$ expansion vanish by
\eqref{eq:lemma-inv-cancellation}. Moreover, the decay assumption
implies $\int |x|\,|vf(x)|\,dx<\infty$, so
$\widehat{vf}(\xi)=\mathcal{O}(|\xi|)$ near zero, and similarly for $g$.
These bounds give an integrable majorant near zero; at infinity,
Plancherel and Cauchy--Schwarz suffice. This proves
\eqref{eq:lemma-inv-positive} in all dimensions under consideration.
Since $f\mapsto\psi_f$ is injective, $S_1vG_1vS_1$ is positive
definite on the finite-dimensional space $S_1L^2$. Hence
\[
D_2=\bigl(S_1vG_1vS_1|_{S_1L^2}\bigr)^{-1},
\qquad D_2=S_1D_2S_1.
\]

We next identify the eigenvalue coefficient. Choose an orthonormal
basis $\{\psi_j\}_{j=1}^d$ of $\mathcal E=\ker H$ and put
$f_j=w\psi_j$. Then $\{f_j\}_{j=1}^d$ is a basis of $S_1L^2$,
although it need not be orthonormal. Accordingly,
\[
S_1=\sum_{i,j=1}^d
\alpha_{i,j}f_i\langle\cdot,f_j\rangle,
\qquad \alpha_{i,j}\in\mathbb C.
\]
On the other hand, \eqref{eq:lemma-inv-positive} yields
\[
\langle S_1vG_1vS_1f_j,f_k\rangle=\delta_{jk}.
\]
Thus $\sum_jf_j\langle\cdot,f_j\rangle$, composed on the right
with $S_1vG_1vS_1$, is the identity on each basis vector $f_k$.
It is therefore the inverse on $S_1L^2$; both operators vanish
on $QL^2$. We conclude that
\begin{equation}\label{eq:lemma-inv-eigenprojection}
\begin{aligned}
S_1D_2S_1
=\sum_{j=1}^d(w\psi_j)\langle\cdot,w\psi_j\rangle
=w\mathcal P_e w, \quad
vS_1D_2S_1v=V\mathcal P_eV.
\end{aligned}
\end{equation}
The second identity uses $vw=wv=V$.

\medskip
\noindent\emph{Absolute boundedness of $D_0$.}
The operators $vG_0v$ and $vG_1v$ are absolutely bounded.
Indeed, their local singularities are integrable and can be
estimated by Young's inequality after multiplication by bounded
$v$. Away from the diagonal, the decay of $V$ gives
Hilbert--Schmidt kernels, including the growing kernel in
dimension three and the logarithmic kernel in dimension four.
Finite-rank operators on $L^2$ are Hilbert--Schmidt and hence
absolutely bounded as well.

To obtain the corresponding bound for $D_0$, choose an integer
$m>n/4$. Iteration of the Newton kernel gives
\[
\bigl|[(UvG_0v)^m](x,y)\bigr|
\lesssim v(x)v(y)
\begin{cases}
1+|x-y|^{2m-n}\mathbf1_{\{|x-y|<1\}},&2m<n,\\
1+|\log|x-y||\mathbf1_{\{|x-y|<1\}},&2m=n,\\
1,&2m>n.
\end{cases}
\]
To see this, split each Newton kernel into its part on
$|x-y|<1$ and its bounded complement. Convolution of the local
parts adds their orders, with a logarithm at order $n$.
Terms containing a bounded part are controlled using
$|V|\in L^1\cap L^\infty$.
Because $4m>n$, the displayed majorant gives
\[
\bigl\|[(UvG_0v)^m](x,\cdot)\bigr\|_2\lesssim v(x).
\]
For $f\in S_1L^2$, the identity $f=(-UvG_0v)^mf$ therefore implies
$|f(x)|\lesssim v(x)\|f\|_2$. Choosing an orthonormal basis of
$S_1L^2$, we obtain
\[
|S_1(x,y)|\lesssim v(x)v(y).
\]

For this argument, set
\[
K_0=vG_0v+S_1,\qquad J=(T+S_1)^{-1}.
\]
The preceding estimates also give
$\|[(UK_0)^m](x,\cdot)\|_2\lesssim v(x)$, and
$((UK_0)^m)^*=(K_0U)^m$.
Iterating the resolvent identity and using
$UK_0J=JK_0U=U-J$ yields the exact formula
\[
J=\sum_{j=0}^{2m-1}(-UK_0)^jU
+(UK_0)^mJ(K_0U)^m.
\]
Every term in the finite sum is absolutely bounded. By
Cauchy--Schwarz, the kernel of the last term is bounded by
\[
C\|J\|_{\mathbb B(L^2)}v(x)v(y).
\]
Thus $J$ is absolutely bounded, and so is $D_0=J-S_1$.
As usual, bounded multiplication operators such as $U$ and the
identity are included in this convention through their diagonal
measure kernels.

\medskip
\noindent\emph{Normalization and inversion of the block matrix.}
The map $f\mapsto(Qf,S_1f)$ is unitary from $L^2$ onto
$QL^2\oplus S_1L^2$. For $\lambda>0$, introduce the scaling
\[
\sigma_\lambda f=
\begin{pmatrix}Qf\\\lambda^{-1}S_1f\end{pmatrix},
\qquad
\sigma_\lambda^*(f_0,f_1)=f_0+\lambda^{-1}f_1.
\]
Here the adjoint is taken with respect to the usual direct-sum
inner product. The map $\sigma_\lambda$ is invertible. It becomes unitary when
the target is equipped with the weighted norm
$\|(f_0,f_1)\|_\lambda^2=\|f_0\|_2^2+\lambda^2\|f_1\|_2^2$.
All matrix adjoints below use the unweighted inner product.

By \eqref{eq:exp of M} and \eqref{eq:lemma-inv-cancellation},
left and right multiplication give
\begin{equation}\label{eq:lemma-inv-blocks}
\begin{aligned}
\sigma_\lambda M^+(\lambda)\sigma_\lambda^*
&=\begin{pmatrix}
QM^+Q&\lambda^{-1}QM^+S_1\\
\lambda^{-1}S_1M^+Q&\lambda^{-2}S_1M^+S_1
\end{pmatrix}\\[1mm]
&=\begin{pmatrix}QTQ&0\\0&S_1vG_1vS_1\end{pmatrix}
+E(\lambda),
\end{aligned}
\end{equation}
Here, suppressing the argument $\lambda$ in the entries,
\[
E(\lambda)=\begin{pmatrix}
E_{00}&\lambda QvG_1vS_1+E_{01}\\
\lambda S_1vG_1vQ+E_{10}&E_{11}
\end{pmatrix}
\]
and
\begin{equation}\label{eq:lemma-inv-block-orders}
\begin{aligned}
E_{00}(\lambda),\ E_{11}(\lambda)
&=\widetilde{\mathcal O}(\lambda),\\
E_{01}(\lambda),\ E_{10}(\lambda)
&=\widetilde{\mathcal O}(\lambda^2).
\end{aligned}
\end{equation}
Indeed, the rank-one terms in dimensions three and four vanish
whenever either outside projection is $S_1$.
The unscaled mixed blocks are consequently
$\lambda^2QvG_1vS_1$ and $\lambda^2S_1vG_1vQ$, modulo
$\widetilde{\mathcal O}(\lambda^3)$, and the lower diagonal block is
$\lambda^2S_1vG_1vS_1+\widetilde{\mathcal O}(\lambda^3)$.
For the upper diagonal block, use
$\lambda^2\log\lambda=\mathcal O_{[(n+3)/2]}(\lambda)$.
Thus $E(\lambda)=\widetilde{\mathcal O}(\lambda)$, with the
sharper mixed-entry remainders in \eqref{eq:lemma-inv-block-orders}.

The constant matrix in \eqref{eq:lemma-inv-blocks} is invertible.
Write
\[
\mathcal D=\begin{pmatrix}D_0&0\\0&D_2\end{pmatrix},
\qquad
\mathcal B(\lambda)
=\bigl(\sigma_\lambda M^+(\lambda)\sigma_\lambda^*\bigr)^{-1}.
\]
For sufficiently small $\lambda_0>0$ and $0<\lambda<\lambda_0$,
the Neumann expansion gives
\begin{equation}\label{eq:lemma-inv-normalized-inverse}
\begin{aligned}
\mathcal B(\lambda)
&=(I+\mathcal D E(\lambda))^{-1}\mathcal D\\
&=\mathcal D-\mathcal D E(\lambda)\mathcal D
+\mathcal D E(\lambda)\mathcal D E(\lambda)\mathcal B(\lambda),
\end{aligned}
\end{equation}
with $\mathcal B(\lambda)=\widetilde{\mathcal O}(1)$.
To verify the absolute-kernel estimates, choose nonnegative
bounded kernel majorants for $\mathcal D$ and for
$\lambda^{k-1}\partial_\lambda^kE$, $0\le k\le[(n+3)/2]$.
The $\ell$-th Neumann term is then dominated by the corresponding
product of majorants times $\lambda^\ell$.
After $k$ derivatives and multiplication by $\lambda^k$, at most
a polynomial factor in $\ell$ is introduced. Taking $\lambda_0$
small makes these majorant series converge in operator norm.
Their sums supply the required common nonnegative kernels.
In particular, the last term in
\eqref{eq:lemma-inv-normalized-inverse} is
$\widetilde{\mathcal O}(\lambda^2)$ in every entry.

Keeping the first-order mixed terms explicit, we obtain
\begin{equation}\label{eq:lemma-inv-matrix-expansion}
\mathcal B(\lambda)=
\begin{pmatrix}
D_0+\widetilde{\mathcal O}(\lambda)
&-\lambda D_0vG_1vD_2+\widetilde{\mathcal O}(\lambda^2)\\[1mm]
-\lambda D_2vG_1vD_0+\widetilde{\mathcal O}(\lambda^2)
&D_2+S_1G(\lambda)S_1
\end{pmatrix},
\end{equation}
where $G(\lambda)=\widetilde{\mathcal O}(\lambda)$ is defined
as the exact lower diagonal entry of $\mathcal B(\lambda)$ minus
$D_2$, extended by zero on $QL^2$.
Thus all higher-order terms in this entry are retained in $G$.
The other three remainders retain their respective outside projections.

\medskip
\noindent\emph{Restoring the scaling.}
Since $\sigma_\lambda$ is invertible, the exact identity is
\begin{equation}\label{eq:lemma-inv-scale-back}
\begin{aligned}
(M^+(\lambda))^{-1}
=\sigma_\lambda^*\mathcal B(\lambda)\sigma_\lambda
=\begin{pmatrix}Q&\lambda^{-1}S_1\end{pmatrix}
\mathcal B(\lambda)
\begin{pmatrix}Q\\\lambda^{-1}S_1\end{pmatrix}.
\end{aligned}
\end{equation}
The upper diagonal entry contributes
$D_0+\widetilde{\mathcal O}(\lambda)$.
Each mixed entry is multiplied by $\lambda^{-1}$, giving
\[
-D_0vG_1vD_2+\widetilde{\mathcal O}(\lambda),
\qquad
-D_2vG_1vD_0+\widetilde{\mathcal O}(\lambda),
\]
respectively. The lower diagonal entry is multiplied by
$\lambda^{-2}$. Consequently,
\[
(M^+(\lambda))^{-1}
=\frac{S_1(D_2+G(\lambda))S_1}{\lambda^2}
+\mathcal N+\mathcal R(\lambda),
\]
where
\begin{equation}\label{eq:lemma-inv-N}
\begin{aligned}
\mathcal N=D_0-D_0vG_1vD_2-D_2vG_1vD_0,\quad
\mathcal R(\lambda)=\widetilde{\mathcal O}(\lambda).
\end{aligned}
\end{equation}
The operator $\mathcal N$ is absolutely bounded by the estimates
above. Moreover, $S_1\mathcal R(\lambda)S_1=0$, since the entire
lower diagonal correction has been incorporated into $G$.
The $\widetilde{\mathcal O}(\lambda^2)$ bounds on the mixed
remainders are precisely what makes their restored contributions
$\widetilde{\mathcal O}(\lambda)$. This proves the lemma.
\end{proof}
\vskip0.5cm
\subsection*{Acknowledgements}    
This Project was supported by the National Key R\&D program of China: No. 2021YFA1002500.
    H. Cheng was supported by CPSF under grant 2025M784444. C. Miao was supported by NSFC under grants 12371095 and 12531005.
   X. Yao was supported by NSFC (Grants No.~12531005  and 12671120).

\end{document}